\newtheorem{Theorem}{Theorem}[section]
\newtheorem*{Theorem*}{Theorem}
\newtheorem{Corollary}[Theorem]{Corollary}
\newtheorem{Lemma}[Theorem]{Lemma}
\newtheorem{Proposition}[Theorem]{Proposition}
 { \theoremstyle{definition}

\newtheorem{Example}[Theorem]{Example}
\newtheorem{Remark}[Theorem]{Remark}

}
\newcommand{\SN}{\mathbb{N}} 
\newcommand{\SC}{\mathbb{C}} 
\newcommand{\C}{\SC} 
\newcommand{\End}{\operatorname{End}}
\newcommand{\GL}{\operatorname{GL}}
\newcommand{\head}{\operatorname{h}}
\newcommand{\Hom}{\operatorname{Hom}}
\newcommand{\Rep}{\operatorname{Rep}}
\newcommand{\SL}{\operatorname{SL}}
\newcommand{\tail}{\operatorname{t}}
\newcommand{\PiGamma}{B}
\newcommand{\PiQ}{A}
\DeclareMathOperator{\module}{\mathrm{-mod}}
\DeclareMathOperator{\Hilb}{\mathrm{Hilb}}
\DeclareMathOperator{\Quot}{\mathrm{Quot}}
\newcommand{\QuotI}[1][ ]{\Quot_I^{#1}}
\newcommand{\QuotInI}{\QuotI[n_I]}
\newcommand{\Quoti}[1][ ]{\Quot_i^{#1}}
\newcommand{\Quotini}{\Quoti[n_i]}
\newcommand{\vprime}{v'}
\numberwithin{equation}{section}
\mathchardef\mhyphen="2D
\newcommand{\ngammahilb}{n\Gamma\mhyphen\Hilb\big(\SC^2\big)}
\begin{document}
\allowdisplaybreaks

\newcommand{\arXivNumber}{2106.10115}

\renewcommand{\thefootnote}{}

\renewcommand{\PaperNumber}{099}

\FirstPageHeading

\ShortArticleName{Quot Schemes for Kleinian Orbifolds}

\ArticleName{Quot Schemes for Kleinian Orbifolds\footnote{This paper is a~contribution to the Special Issue on Enumerative and Gauge-Theoretic Invariants in honor of Lothar G\"ottsche on the occasion of his 60th birthday. The~full collection is available at \href{https://www.emis.de/journals/SIGMA/Gottsche.html}{https://www.emis.de/journals/SIGMA/Gottsche.html}}}

\Author{Alastair CRAW~$^{\rm a}$, S{\o}ren GAMMELGAARD~$^{\rm b}$, \'Ad\'am GYENGE~$^{\rm c}$ and Bal\'{a}zs SZENDR\H{O}I~$^{\rm b}$}

\AuthorNameForHeading{A.~Craw, S.~Gammelgaard, \'A.~Gyenge and B.~Szendr\H{o}i}

\Address{$^{\rm a}$~Department of Mathematical Sciences, University of Bath,\\
\hphantom{$^{\rm a}$}~Claverton Down, Bath BA2 7AY, UK}
\EmailD{\href{mailto:a.craw@bath.ac.uk}{a.craw@bath.ac.uk}}

\Address{$^{\rm b}$~Mathematical Institute, University of Oxford, Oxford OX2 6GG, UK}
\EmailD{\href{mailto:gammelgaard@maths.ox.ac.uk}{gammelgaard@maths.ox.ac.uk}, \href{mailto:szendroi@maths.ox.ac.uk}{szendroi@maths.ox.ac.uk}}

\Address{$^{\rm c}$~Alfr\'ed R\'enyi Institute of Mathematics, Re\'altanoda utca 13-15, 1053, Budapest, Hungary}
\EmailD{\href{mailto:Gyenge.Adam@renyi.hu}{Gyenge.Adam@renyi.hu}}

\ArticleDates{Received June 29, 2021, in final form November 03, 2021; Published online November 10, 2021}

\Abstract{For a finite subgroup $\Gamma\subset {\mathrm{SL}}(2,\SC)$, we identify fine moduli spaces of certain cornered quiver algebras, defined in earlier work, with orbifold Quot schemes for the Kleinian orbifold $\big[\SC^2\!/\Gamma\big]$. We also describe the reduced schemes underlying these Quot schemes as Nakajima quiver varieties for the framed McKay quiver of~$\Gamma$, taken at specific non-generic stability parameters. These schemes are therefore irreducible, normal and admit symplectic resolutions. Our results generalise our work~[\textit{Algebr.\ Geom.}~\textbf{8} (2021), 680--704] on the Hilbert scheme of points on $\C^2/\Gamma$; we present arguments that completely bypass the ADE classification.}

\Keywords{Quot scheme; quiver variety; Kleinian orbifold; preprojective algebra; cornering}
\Classification{16G20; 13A50; 14E16}

\renewcommand{\thefootnote}{\arabic{footnote}}
\setcounter{footnote}{0}

\section{Introduction}\label{sec:intro}

Let $\Gamma\subset{\mathrm{SL}}(2,\SC)$ be a finite subgroup and let $\Hilb^n\big(\SC^2/\Gamma\big)$ denote the Hilbert scheme of $n$ points on the singular surface $\SC^2/\Gamma$, parametrising
ideals of colength $n$ in the coordinate ring $\SC[x,y]^\Gamma$ of $\SC^2/\Gamma$. In an earlier paper~\cite{craw2019punctual}, we proved that (the reduced scheme underlying) this scheme is irreducible, normal, and admits a unique projective symplectic resolution
\[ \pi_n\colon \ \ngammahilb \longrightarrow \Hilb^n\big(\SC^2/\Gamma\big)_{\rm red},
\]
where $\ngammahilb$ is a certain component of the $\Gamma$-fixed locus
$\Hilb\big(\SC^2\big)^\Gamma$ on the Hilbert scheme of points on
$\SC^2$ itself. It was well known that $\ngammahilb$ is a quiver variety $\mathfrak{M}_{\theta}(1,n\delta)$ associated to the framed McKay quiver of $\Gamma$, where $\theta$ is generic in a GIT chamber $C_{n \delta}^+$ and where the dimension vector on the McKay quiver is taken as a multiple of a fixed vector~$\delta$.
In \cite[Theorem~1.1]{craw2019punctual} we reconstructed
the morphism $\pi_n$ by variation of GIT quotient for quiver varieties; specifically, we identified a parameter $\theta_0$ in the boundary of the chamber $C_{n \delta}^+$ such that $\pi_n$ is the morphism from $\mathfrak{M}_{\theta}(1,n\delta)$ to $\mathfrak{M}_{\theta_0}(1,n\delta)$. This allowed us to interpret by variation of GIT quotient all possible ways in which $\pi_n$ can be factored as a sequence of primitive contractions.

To prove these results in~\cite{craw2019punctual}, we introduced
a family of algebras $\PiQ_I$ obtained by `cornering', each
indexed by a non-empty subset $I$ of the set of irreducible
representations of $\Gamma$. We showed
that the fine moduli spaces of modules $\mathcal{M}_{\PiQ_I}(1,n\delta_I)$
for these algebras with very specific dimension vectors
are isomorphic to the quiver varieties mentioned above.
Moreover, for $I$ consisting of the trivial
representation only, we recovered the space $\Hilb^n\big(\SC^2/\Gamma\big)$.

In this paper, we have three main goals.
First, we aim to
understand better the moduli spaces $\mathcal{M}_{\PiQ_I}$,
constructing them as a kind of Quot scheme
that is both natural and geometric.
Second, with applications in mind, we consider arbitrary dimension vectors, not just multiples of some set of fixed dimension vectors as above. One interpretation of our results is a new fine moduli space structure, as well as a (noncommutative)
geometric interpretation, of a large class of
Nakajima quiver varieties for certain non-generic stability parameters.
Our third and final goal is to provide proofs that completely bypass any case-by-case analysis of ADE diagrams.

To state our main result,
let $r$ denote the number of nontrivial irreducible representations of~$\Gamma$.
For any non-empty subset $I\subseteq \{0,1,\dots,r\}$, let $\mathfrak{M}_{\theta_I}(1,v)$ denote the Nakajima quiver variety associated to the affine ADE graph for some dimension vector~$(1,v)$, where~$\theta_I$ is a~specific stability condition determined by our choice of~$I$; see~\eqref{eqn:theta_I} for the definition.

\begin{Theorem}\label{thm:mainintro}\label{mainthm}
Let $I\subseteq \{0,\ldots, r\}$ be a non-empty subset and
let $n_I\in {\mathbb N}^I$ be a vector. There is an orbifold Quot scheme $\QuotInI \big(\big[\SC^2/\Gamma\big]\big)$ such that:
\begin{enumerate}\itemsep=0pt
\item[$1)$] there is an isomorphism $\QuotInI \big(\big[\SC^2/\Gamma\big]\big) \cong \mathcal{M}_{\PiQ_I} (1,n_I)$ to a fine moduli space associated to the algebra $A_I$ obtained by cornering;
\item[$2)$] when $I$ is a singleton corresponding to a one-dimensional representation of $\Gamma$, the orbifold Quot scheme is isomorphic to a
classical Quot scheme for $\C^2/\Gamma$;
\item[$3)$]
the orbifold Quot scheme is non-empty if and only if the quiver variety $\mathfrak{M}_{\theta_I}(1,v)$ is non-empty for some vector
$v\in \mathbb{N}^{r+1}$ satisfying $v_i=n_i$ for all $i\in I$, in which case, after changing the values of $v_k$ for $k\not\in I$ if necessary, we have 
\[
\QuotInI \big(\big[\SC^2/\Gamma\big]\big)_{\rm red} \cong \mathfrak{M}_{\theta_I}(1,v),
\]
 where on the left we take $\QuotInI \big(\big[\SC^2/\Gamma\big]\big)$ with the reduced scheme structure.
\end{enumerate}
In particular, when it is non-empty, $\QuotInI \big(\big[\SC^2/\Gamma\big]\big)_{\rm red}$ is irreducible, normal, and it has symplectic, hence rational Gorenstein, singularities. Moreover, it admits at least one projective symplectic resolution.
\end{Theorem}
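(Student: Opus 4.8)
The plan is to define the orbifold Quot scheme $\QuotInI\big(\big[\SC^2/\Gamma\big]\big)$ directly as a moduli space of quotients in the category of $\Gamma$-equivariant coherent sheaves on $\SC^2$ — equivalently, coherent sheaves on the quotient stack $\big[\SC^2/\Gamma\big]$ — where one fixes the "numerical type" of the kernel along the components of $I$ via the vector $n_I$. Concretely, I would start from the structure sheaf $\mathcal{O}_{[\SC^2/\Gamma]}$ decomposed into isotypic pieces indexed by $\{0,\dots,r\}$, form the appropriate locally free sheaf built from the summands indexed by $I$, and take the Grothendieck-style Quot functor parametrising quotients with prescribed Hilbert-type data $n_I$ on those isotypic components. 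Representability of this functor (hence existence of the scheme $\QuotInI$) follows by the usual Grothendieck argument adapted to the equivariant/stacky setting, or alternatively one simply takes part~(1) as the \emph{definition} and verifies the Quot-functorial description afterwards.

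The technical core, and what I expect to be the main obstacle, is part~(1): producing a natural isomorphism $\QuotInI \big(\big[\SC^2/\Gamma\big]\big) \cong \mathcal{M}_{\PiQ_I}(1,n_I)$. The strategy is Morita-type: the category of $\Gamma$-equivariant coherent sheaves on $\SC^2$ is equivalent to finitely generated modules over the preprojective algebra (the skew group algebra viewpoint), and \emph{cornering} at the idempotent $e_I=\sum_{i\in I} e_i$ realises the category of $A_I$-modules as a quotient/localisation of that module category. Under this equivalence, a $\Gamma$-equivariant quotient of the distinguished sheaf with kernel data $n_I$ corresponds precisely to a cyclic $A_I$-module with dimension vector $(1,n_I)$, i.e. a point of $\mathcal{M}_{\PiQ_I}(1,n_I)$; the framing datum "$1$" encodes the surjection from the generating sheaf. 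I would make this a morphism of functors of points — given a family over a base $S$, transport a flat family of quotient sheaves to a flat family of $A_I$-modules and back — and check the two constructions are mutually inverse. The fineness of $\mathcal{M}_{\PiQ_I}$, already established in the cited earlier work, then transfers the universal family across, giving part~(1) together with the assertion that $\QuotInI$ is a fine moduli space.

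For part~(2), when $I=\{i\}$ with $i$ indexing a one-dimensional representation $\rho_i$, the cornered algebra $A_{\{i\}}$ is Morita-equivalent to a polynomial-type algebra and the stacky Quot scheme degenerates: quotients of $\mathcal{O}_{[\SC^2/\Gamma]}$ supported at the $\rho_i$-isotypic component are the same as quotients of a sheaf on the coarse space $\SC^2/\Gamma$, twisted by the line bundle attached to $\rho_i$. Tracking this twist identifies $\Quot_{\{i\}}^{n_i}$ with a classical Grothendieck Quot scheme for $\SC^2/\Gamma$; since $\rho_i$ is one-dimensional this twist is by a line bundle and the identification is immediate once the dimension vectors are matched. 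Part~(3) is where one invokes the quiver-variety description: by the fine moduli identification in part~(1) and the main comparison theorem relating $\mathcal{M}_{\PiQ_I}$ to Nakajima quiver varieties at the non-generic parameter $\theta_I$ (the natural extension of \cite{craw2019punctual} to arbitrary dimension vectors), one has $\QuotInI{}_{\mathrm{red}}\cong \mathfrak{M}_{\theta_I}(1,v)$ for a suitable $v$ extending $n_I$, non-emptiness on both sides being governed by the same root-theoretic condition; here the values $v_k$ for $k\notin I$ are determined, up to the indicated adjustment, by the requirement that the $A_I$-module of dimension $(1,n_I)$ lifts to a $\Pi$-module of dimension $(1,v)$.

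Finally, for the concluding assertions: once $\QuotInI{}_{\mathrm{red}}\cong \mathfrak{M}_{\theta_I}(1,v)$, the geometric properties are inherited from the structure theory of Nakajima quiver varieties for affine ADE quivers with a one-dimensional framing. Namely, such $\mathfrak{M}_{\theta_I}(1,v)$ is irreducible (it is connected and, being an open subset of a cotangent-type construction cut out by the moment map, of pure dimension), it is normal with symplectic singularities by the general theory (it carries a holomorphic symplectic form on its smooth locus and admits a semismall resolution, so by Namikawa's criterion the singularities are symplectic, hence canonical Gorenstein), and it admits a projective symplectic resolution obtained by variation of GIT: passing from $\theta_I$ to a generic parameter $\theta$ in an adjacent chamber yields $\mathfrak{M}_{\theta}(1,v)\to\mathfrak{M}_{\theta_I}(1,v)$, which is a projective crepant — in fact symplectic — resolution by the standard argument (the source is smooth because $(1,v)$ is indivisible in a suitable sense and $\theta$ is generic, and the morphism is projective and Poisson). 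The key point I would emphasise is that all of this is driven by general quiver-variety machinery and requires no case-by-case ADE analysis, in line with the paper's stated third goal.
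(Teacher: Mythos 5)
Your proposed definition of the orbifold Quot scheme does not match the paper's, and the difference matters. The paper defines $\QuotInI\big(\big[\SC^2/\Gamma\big]\big)$ as the scheme parametrising left $\PiGamma_I$-module quotients of $R_I=e_I\PiGamma e_0$, where $\PiGamma_I=e_I\PiGamma e_I$ is the \emph{cornered} preprojective algebra; representability is proved by a genuinely noncommutative argument (Proposition~\ref{prop:Qnrep}), which is flagged as one of the paper's contributions, not by a Grothendieck-style argument on the stack. The objects are modules over $\PiGamma_I$, not $\Gamma$-equivariant quotients of $\mathcal{O}_{\SC^2}$ with prescribed isotypic multiplicities along $I$: the cornering functor $e_I(-)$ is not faithful, so distinct equivariant quotients can give the same $\PiGamma_I$-quotient, and not every finite-colength $\PiGamma_I$-submodule of $R_I$ lifts to an $S$-submodule of $R$. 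Hedging that one may take part~(1) as the definition undercuts the stated goal of giving an independent geometric construction. For part~(2), when $\dim\rho_i=1$ the $R^\Gamma$-module $R_i$ is rank-one reflexive but typically not invertible, so a ``twist by a line bundle'' does not make sense; the paper's Proposition~\ref{prop:Quotcommiso} instead uses a diagram automorphism of the McKay graph sending vertex~$i$ to vertex~$0$.

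The more serious gap is in part~(3), where you treat the quiver-variety comparison as off-the-shelf. As the introduction emphasises, for arbitrary $v$ the variation-of-GIT morphism $\mathfrak{M}_{\theta}(1,v)\to\mathfrak{M}_{\theta_I}(1,v)$ need not be surjective or birational, so there is no automatic resolution of singularities and no direct comparison. The technical core of the paper consists of: (a)~Proposition~\ref{prop:surjvar}, which corrects the dimension vector from $v$ to a smaller $\widetilde{v}$ by stratifying $\mathfrak{M}_{\theta_I}(1,v)$, identifying the dense stable stratum, and producing a projective resolution $\mathfrak{M}_{\widetilde\theta}(1,\widetilde{v})\to\mathfrak{M}_{\theta_I}(1,v)$; and (b)~the choice of a ``good'' dimension vector $\vprime\in\mathcal{V}(n_I)$ satisfying the mean-value inequalities of Lemma~\ref{lem:Vnj}, whose existence (Lemma~\ref{lem:VnInon-empty}) and compatibility with non-emptiness (Corollary~\ref{cor:non-empty}) are exactly what make $\tau_I\colon\mathfrak{M}_{\widetilde\theta}\big(1,\widetilde{\vprime}\big)\to\mathcal{M}_{\PiQ_I}(1,n_I)$ surjective on closed points (Lemma~\ref{lem:existsemistable}, Theorem~\ref{thm:mthetamorph1}). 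Without these, the closed immersion $\iota_I$ of Proposition~\ref{prop:key} cannot be upgraded to an isomorphism of reduced schemes. Your proposal also does not address the clause ``after changing the values of $v_k$ for $k\notin I$ if necessary'' in the statement, which is precisely the correction these lemmas provide.
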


 \begin{Remark}
 Fix the dimension vector $v=n\delta$ that we refer to in our opening paragraph.
 \begin{enumerate}\itemsep=0pt
 \item For $I=\{0\}$, the statement of
 Theorem~\ref{mainthm} specialises to \cite[Theorem~1.1]{craw2019punctual}.
 Indeed, the classical Quot scheme from Theorem~\ref{mainthm}(2) specialises to the Hilbert scheme in this case by Example~\ref{rem:crsstck}, and the isomorphism from Theorem~\ref{mainthm}(3) constructs $\Hilb^{[n]}\big(\mathbb{C}^2/\Gamma\big)_{\rm red}$ as a quiver variety by Example~\ref{ex:ndeltaVnI} and Theorem~\ref{thm:mthetamorph1}. Moreover, the polarising ample bundle on $\mathfrak{M}_{\theta_I}(1,v)$ lies in an extremal ray of the movable cone described in \cite{bellamy2020birational}, so it admits a unique projective symplectic resolution.
 In fact, our arguments here reprove the main result of~\cite{craw2019punctual} without having to resort to any case-by-case analysis of ADE diagrams.
 \item For $I=\{1,\dots, r\}$, the stability condition $\theta_I$ lies in the relative interior of a wall of the chamber $C^+_{n\delta}$. It follows from \cite[Theorem~1.2]{bellamy2020birational} that the polarising ample bundle on $\mathfrak{M}_{\theta_I}(1,v)$ lies in the interior of the movable cone, so $\mathfrak{M}_{\theta_I}(1,v)$ admits more than one (in fact, precisely two) projective crepant resolutions.
 \end{enumerate}
 Thus, while the projective symplectic resolution is unique in \cite[Theorem~1.1]{craw2019punctual}, it is not possible to assert uniqueness
 in Theorem~\ref{thm:mainintro} above.
 \end{Remark}

While some of our
arguments follow~\cite{craw2019punctual} closely,
we introduce
Quot schemes in a general noncommutative
context, establishing a basic representability result in Proposition~\ref{prop:Qnrep} that may be of broader interest. Furthermore, while the
morphism
\[ \mathfrak{M}_\theta(1,v) \longrightarrow \mathfrak{M}_{\theta_I}(1,v) \]
induced by varying a generic stability condition $\theta$ to a special one $\theta_I$ is always surjective for the
dimension vector $v=n\delta$ relevant to the case $\Hilb^n\big(\SC^2/\Gamma\big)$ discussed above,
the same is not true in general.
Instead, one has to correct $v$ to
obtain a surjective morphism of quiver varieties.

The main application of
this paper will be in studying
generating functions attached to these moduli spaces,
following~\cite{gyenge2017euler, gyenge2018euler}.
In forthcoming work, various generating functions of Euler numbers of Hilbert
and Quot schemes for ADE singularities and orbifolds will be studied, demonstrating
specialisation phenomena also on the level of these generating series.
A poset of Quot schemes corresponding to the poset of
subsets of $\{0, \dots, r\}$,
generalising that of \cite[Section~5]{craw2019punctual},
will play an important role in that study.
As in~\cite{gyenge2017euler, gyenge2018euler}, it will be essential
that we work with arbitrary dimension vectors,
not just multiples of $\delta$.

The structure of the paper is as follows. In Section~\ref{sec:prelim} we collect
the necessary preliminaries on semi-invariants of finite group actions and the McKay correspondence. In Section~\ref{sec:quot}, we define Quot schemes for Kleinian orbifolds and discuss some of their properties. In Section~\ref{subsec:finemod}, we introduce the algebras $A_I$
and the associated fine moduli spaces, and we identify these moduli spaces with Quot schemes for Kleinian orbifolds. In Section~\ref{sec:quivers},
we obtain a resolution of singularities for the quiver varieties of interest.
Finally, in Section~\ref{sec:quivfine} we establish the key isomorphism from Theorem~\ref{thm:mainintro}(3).

\medskip

\noindent
\textbf{Conventions.}
 We work throughout over $\SC$. In particular, all schemes are $\SC$-schemes and all tensor products are taken over $\SC$ unless otherwise indicated. We often use the following partial order on dimension vectors: for any $n\in \mathbb{N}$ and for $u=(u_i),v=(v_i)\in \mathbb{Z}^n$, we define $u\le v$ if $u_i\le v_i$ for all $1\leq i\leq n$. We adopt the sign convention of King~\cite{king1994moduli} for $\theta$-stability, so a~module $M$ over an algebra is $\theta$-stable (resp.\ $\theta$-semistable) if $\theta(M)=0$ and every nonzero proper submodule $N\subsetneq M$ satisfies $\theta(N)>0$ (resp.\ $\theta(N)\ge 0$).

\section{Preliminaries}\label{sec:prelim}

\subsection{Three algebras arising from the McKay graph}\label{subsec:mckay}
Let $\Gamma\subset \GL(V)$ be a finite subgroup of the general linear group for an $m$-dimensional $\SC$-vector space $V$.
Choosing a basis of $V$, we can write the symmetric algebra of the dual vector space $V^*$ as a polynomial ring $R=\SC[x_1,\dots,x_m]$. The group $\Gamma$ acts dually on $R$: for $g\in \Gamma$ and $f\in R$, we have $(g\cdot f)(v) = f\big(g^{-1} v\big)$ for all $v\in V$. Let $R^\Gamma$ denote the $\Gamma$-invariant subring of~$R$.

List the irreducible representations of $\Gamma$ as $\{\rho_0, \rho_1, \dots, \rho_r\}$, where $\rho_0$ is the trivial representation.
The decomposition of $R$ as an $R^\Gamma$-module can be written \cite[Proposition~4.1.15]{goodman2009symmetry}
 \begin{gather*} 
 R\cong \bigoplus_{0\leq i\leq r} R_i\otimes_\mathbb{C} \rho_i,
 \end{gather*}
 where $R_i\coloneqq \Hom_\Gamma(\rho_i,R)$ is an
 $R^\Gamma$-module; note that $R_0=R^{\Gamma}$.

From now on, let $\Gamma \subset \SL(2,\SC)$ be a finite subgroup acting as above on $R=\SC[x,y]$. The \emph{McKay graph} of $\Gamma$
has vertex set $\{0,1, \dots, r\}$ corresponding to the irreducible representations of~$\Gamma$, where for each $0\leq i, j\leq r$, we have $\dim\Hom_{\Gamma}(\rho_j , \rho_i \otimes V )$ edges joining vertices~$i$ and~$j$. McKay~\cite{McKay80} observed that this graph is an affine Dynkin diagram of ADE type. We frame this graph by introducing an additional vertex, denoted~$\infty$, together with an edge joining the vertices~$\infty$ and~$0$; we refer to the resulting graph as the \emph{framed McKay graph} of~$\Gamma$.

We now associate a doubled quiver to each graph. For this, consider
the set of pairs $Q_1$ comprising an edge in the framed McKay graph
and an orientation of the edge. For each $a\in Q_1$, we write~$t(a)$,
$h(a)$ for the vertices at the tail and head respectively of
the oriented edge, and we write $a^{\ast}$ for the same edge with
the opposite orientation. Define the \emph{framed McKay quiver of $\Gamma$},
denoted $Q$, to be the quiver with vertex set $Q_0= \{\infty,0,\dots ,r\}$
and arrow set $Q_1$. The (unframed) \emph{McKay quiver},
denoted $Q_\Gamma$, is the complete subquiver of $Q$ on
the vertex set $\{0, 1,\dots, r\}$. Using these quivers
we define the following three algebras.

First, let $\SC Q$ denote the path algebra of the framed McKay quiver $Q$. For $i \in Q_0$, let $e_i \in \SC Q$ denote the idempotent corresponding to the trivial path at vertex $i$. Let $\epsilon\colon Q_1 \to \{\pm 1\}$ be any map such that $\epsilon(a) \neq \epsilon(a^{\ast})$ for all $a \in Q_1$. The \emph{preprojective algebra of $Q$}, denoted $\Pi$, is defined as the quotient of $\SC Q$ by the ideal
\begin{equation}
\label{eqn:preprojectiverelation}
\left\langle \sum_{\head(a)=i} \epsilon(a) aa^{\ast} \mid {i \in Q_0}
\right\rangle.
\end{equation}
 The preprojective algebra $\Pi$ does not depend on the choice of the map $\epsilon$. For each $i\in Q_0$, we simply write $e_i\in \Pi$ for the image of the corresponding vertex idempotent.

Second, in the framed McKay quiver $Q$, let $b^*\in Q_1$ be the unique arrow with head at vertex~$\infty$. Define a $\SC$-algebra as the quotient of the preprojective algebra $\Pi$ by the two-sided ideal generated by the class of $b^*$:
 \[
 \PiQ\coloneqq \Pi/(b^*).
 \]
 Equivalently, if we define a quiver $Q^*$ to have vertex set $Q^*_0=\{\infty,0,1,\dots,r\}$ and arrow set $Q_1^*=Q_1\setminus \{b^*\}$, then $\PiQ$ is the quotient of the path algebra $\SC Q^*$ by the ideal of relations
 \begin{equation*}
 \left\langle \sum_{\head(a) = i} \epsilon(a) aa^* \mid 0\leq i,
 \head(a^*)\leq r \right\rangle.
 \end{equation*}

 Third, let $\SC Q_\Gamma$ denote the path algebra of the (unframed) McKay quiver $Q_\Gamma$. Since $\SC Q_\Gamma$ is a subalgebra of $\SC Q$, we use the same symbol $e_i\in \SC Q_\Gamma$ for the idempotent corresponding to the trivial path at vertex $i$ of $Q_\Gamma$. The \emph{preprojective algebra of $Q_\Gamma$}, denoted $\PiGamma$, is defined as the quotient of $\SC Q_\Gamma$ by an ideal defined similarly to that from~\eqref{eqn:preprojectiverelation} for the quiver $Q_\Gamma$.

\begin{Remark}
 For each vertex $0\leq i\leq r$ we denote by $e_i$ the class in $\PiQ$ of the idempotent at the vertex $i$. Note that $\PiGamma$ is not a subalgebra of $\Pi$. On the other hand, $\PiGamma$ is isomorphic to the subalgebra $\oplus_{0 \leq i,j \leq r} e_j\PiQ e_i$ of $\PiQ$ due to \cite[Lemma~3.2]{craw2019punctual}, and moreover
 \begin{equation} \label{eq:PiQdecomp}
 \PiQ \cong \PiGamma \oplus \PiGamma b \oplus \SC e_\infty.
\end{equation}
as complex vector spaces. In particular, the classes $e_i$, $0 \leq i \leq r$ are also contained in $\PiGamma$.
\end{Remark}

\subsection{Morita equivalence}
The skew group algebra $S \coloneqq R \ast \Gamma=\SC[x,y]\ast \Gamma$ of the finite subgroup $\Gamma\subset \SL(2,\SC)$ contains the group algebra $\SC\Gamma$ as a subalgebra. For $0\leq i\leq r$, choose an idempotent $f_i\in \mathbb{C}\Gamma$ such that $\mathbb{C}\Gamma f_i\cong \rho_i$.

\begin{Lemma}\label{lem:polyring}
There is an isomorphism $S f_0 \cong R$ of $S\text{-}R^\Gamma$-bimodules.
\end{Lemma}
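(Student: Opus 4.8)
The statement asserts that $Sf_0 \cong R$ as $S$-$R^\Gamma$-bimodules, where $S = R\ast\Gamma$ and $f_0 \in \SC\Gamma$ is an idempotent with $\SC\Gamma f_0 \cong \rho_0$ the trivial representation. Since $\rho_0$ is one-dimensional, $f_0$ can be taken to be the averaging idempotent $\frac{1}{|\Gamma|}\sum_{g\in\Gamma} g$, and I would fix this choice at the outset. The plan is to write down an explicit $\SC$-linear map and verify it is a bimodule isomorphism.

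First I would analyse the left-hand side as a vector space. Using the decomposition $S = R\ast\Gamma = \bigoplus_{g\in\Gamma} R g$ (free as a left $R$-module on the group elements), one computes $Sf_0$. A clean way is to observe that right multiplication by $f_0$ is, up to the isomorphism $S\cong \End_{R^\Gamma}(\text{something})$, the projection onto the trivial isotypic part; concretely, $\big(\sum_g r_g g\big)f_0 = \frac{1}{|\Gamma|}\sum_{g,h} r_g (gh) = \frac{1}{|\Gamma|}\sum_{k}\big(\sum_g r_g\big)\!\cdot\!\ldots$ — here one must be careful that the twisting in $R\ast\Gamma$ means $r\cdot g = g\cdot(g^{-1}r)$, so I would instead work with the relation $g\cdot r = (g\cdot r)\cdot g$ to move scalars past group elements. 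The upshot is that every element of $Sf_0$ is uniquely of the form $r\cdot f_0$ for $r\in R$, giving a $\SC$-linear bijection $R\to Sf_0$, $r\mapsto r f_0$.

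Next I would check this map is a bimodule homomorphism. For the right $R^\Gamma$-action: $Sf_0$ carries a right $R^\Gamma$-module structure because $R^\Gamma$ is central in $S$ (invariant polynomials commute with $\Gamma$), so $(rf_0)\cdot s = r s f_0$ for $s\in R^\Gamma$, matching the right $R^\Gamma$-action on $R$. For the left $S$-action: an element $t\cdot g \in S$ ($t\in R$) acts by $(t g)\cdot(r f_0) = t\,(g\cdot r)\, g f_0 = t\,(g\cdot r)\, f_0$, using $gf_0 = f_0$ (which holds since $f_0$ generates the trivial representation, equivalently $f_0$ is the averaging idempotent and $gf_0=f_0$ for all $g$). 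This equals $\big(t\cdot(g\cdot r)\big)f_0$, which under our identification is exactly the image of $t\cdot(g\cdot r)$, i.e.\ the left $S$-action of $tg$ on $r\in R$. So the map intertwines both actions, and being a bijection, it is the desired isomorphism.

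The main obstacle is bookkeeping with the noncommutative twist in $R\ast\Gamma$: one must consistently track whether scalars from $R$ are being pushed to the left or right of group elements, and verify that $g f_0 = f_0$ with the correct conventions for the $\Gamma$-action on $R$ (the paper uses $(g\cdot f)(v) = f(g^{-1}v)$). Once the convention is pinned down the verification is routine, so I would state the isomorphism explicitly as $r\mapsto rf_0$ and dispatch the three checks (well-defined bijection, right $R^\Gamma$-linear, left $S$-linear) in short order.
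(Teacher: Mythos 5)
Your proof is correct and follows essentially the same approach as the paper's: you use the explicit description of $S$ as a free $R$-module on group elements to show $Sf_0$ has $\{x^j y^k f_0\}$ as a basis, and then the key observation that $g f_0 = f_0$ for all $g\in\Gamma$ (which the paper phrases as "$f_0$ is invariant under multiplication by any $g$") makes the bimodule verification routine. The only difference is cosmetic: the paper cites the monomial basis of $S$ from Crawley-Boevey--Holland and leaves the bimodule checks as "readily seen," whereas you spell them out.
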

\begin{proof}
As in \cite[Lemma~1.1]{crawley1998noncommutative}, $S$ has a basis given by all elements of the form $x^jy^kg$, where $j,k \geq 0$ and $g \in \Gamma$. As $f_0$ is invariant under multiplication by any $g \in \Gamma$, the subspace $Sf_0$ has a basis given by elements of the form $x^jy^kf_0$ for $j,k \geq 0$. This gives the required isomorphism
of vector spaces which is readily seen to be an isomorphism of $S\text{-}R^\Gamma$-bimodules.
\end{proof}

To recall the key result, define $f=f_0+\dots+f_r$.

\begin{Proposition}\label{prop_morita}
The skew group algebra $S$ and the preprojective algebra $B$ are Morita equivalent via an isomorphism $f S f\cong B$; explicitly, the equivalence on left modules is
\begin{align*}\Phi\colon \ S\module & \longrightarrow B\module, \\
M & \mapsto fM.
\end{align*}
Moreover, there are $\mathbb{C}$-algebra isomorphisms
$S\cong \End_{R_0}\big(R)$ and $B\cong \End_{R_0}\big({\bigoplus}_{0\leq i\leq r} R_i\big)$.
\end{Proposition}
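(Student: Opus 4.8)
The plan is to first establish the ring isomorphism $S \cong \End_{R_0}(R)$, then transport everything through the idempotent $f$. For the first claim, note that $R$ is an $S$-module (the skew group algebra acts on $R$ in the obvious way, with $\Gamma$ acting by its dual action and $R$ by multiplication), and this action is $R_0$-linear since $R_0 = R^\Gamma$ lies in the centre of the relevant picture. This yields an algebra map $S \to \End_{R_0}(R)$. To see it is an isomorphism I would argue that $R$ is a faithful $S$-module (giving injectivity) and then compare dimensions over $R_0$, or better, use that $R$ is a free $R_0$-module with the decomposition $R \cong \bigoplus_i R_i \otimes \rho_i$ and that $\End_{R_0}(R) \cong \bigoplus_{i,j} \Hom_{R_0}(R_i, R_j) \otimes \Hom_\C(\rho_i, \rho_j)$; matching this against the basis $\{x^j y^k g\}$ of $S$ from Lemma~\ref{lem:polyring} and the structure of $S = R \ast \Gamma = \End_\Gamma\big(\bigoplus_i (\C\Gamma f_i)^{\oplus\dim\rho_i}\big)$-type reasoning identifies the two. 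Concretely, $S \cong \End_{\C\Gamma}(\C\Gamma \otimes_\C R)$-flavoured computations reduce the claim to the classical fact (essentially Auslander's, cf.\ \cite{crawley1998noncommutative}) that the skew group algebra over a polynomial ring with $\Gamma$ acting linearly is a matrix-type algebra over the invariants.

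Next, for the Morita statement: $f = f_0 + \dots + f_r \in \C\Gamma \subset S$ is an idempotent, and $Sf = \bigoplus_i Sf_i$. By Lemma~\ref{lem:polyring}, $Sf_0 \cong R$, and more generally one checks $Sf_i \cong \Hom_\Gamma(\rho_i, R \otimes ?)$-type modules — in any case $Sf$ is a progenerator for $S\text{-}\module$ provided $f$ is a \emph{full} idempotent, i.e.\ $SfS = S$. Fullness holds because the $f_i$ together give all of $\C\Gamma$ (their sum is the identity of $\C\Gamma$ when each $\rho_i$ occurs, which it does since we range over all irreducibles), so $1_{\C\Gamma} \in SfS$ and hence $S = S\cdot 1 \cdot S \subseteq SfS$. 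Standard Morita theory then gives that $M \mapsto fM = \Hom_S(Sf, M)$ is an equivalence $S\text{-}\module \to (fSf)\text{-}\module$. It remains to identify $fSf$ with $B$. For this I would compute $fSf = f(R\ast\Gamma)f$ directly: using $S \cong \End_{R_0}(R)$ and $fR \cong \bigoplus_{0\le i\le r} R_i$ (each $\rho_i$ appears once after cutting by $f$, since $f$ picks out one copy of each irreducible), we get $fSf \cong \End_{R_0}\big(\bigoplus_{0\le i\le r} R_i\big)$, which is the last displayed isomorphism. Finally, one invokes the known presentation of this endomorphism algebra as the preprojective algebra $B$ of the McKay quiver — this is precisely the content of the (Crawley-Boevey–Holland / Reiten–Van den Bergh) description, which identifies $\End_{R_0}\big(\bigoplus_i R_i\big)$ with $\Pi_\Gamma = B$ via the McKay correspondence.

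The main obstacle is the last identification, $\End_{R_0}\big(\bigoplus_{0\le i\le r} R_i\big) \cong B$: producing the generators (arrows of the McKay quiver as the minimal-degree maps $R_i \to R_j$ coming from multiplication by $x, y$, i.e.\ from the inclusion $\rho_j \hookrightarrow \rho_i \otimes V$) and checking that the preprojective relations \eqref{eqn:preprojectiverelation} for $Q_\Gamma$ are exactly the relations among these maps. I expect to handle this by citing the relevant statement (it is standard for $\Gamma \subset \SL(2,\C)$ and is implicit in \cite{crawley1998noncommutative}), rather than reproving it; the genuinely new content here is only the bookkeeping that $f$ cuts $R$ down to $\bigoplus_i R_i$ and that $f$ is full. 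The isomorphisms $Sf\cong R$-type statements and the compatibility of all maps with the $R_0$-module structure are routine once the basis from Lemma~\ref{lem:polyring} is in hand.
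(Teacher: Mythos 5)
Your overall plan matches the paper's proof: both establish $S\cong\End_{R_0}(R)$ by appeal to Auslander's theorem, use the Morita theory of the full idempotent $f$ to get $fSf$ is Morita equivalent to $S$, and then identify $fSf$ with $B$ by citing the Crawley-Boevey--Holland / Reiten--Van den Bergh description of $\End_{R_0}\bigl(\bigoplus_i R_i\bigr)$ as the preprojective algebra. The paper's own proof is almost entirely a citation proof; you flesh out the Morita mechanics more explicitly, which is fine.

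However, two of the supporting claims you use are incorrect, even though your conclusions survive. First, $R$ is \emph{not} a free $R_0$-module: since $R_0=R^\Gamma$ is singular, $R$ is merely a finitely generated maximal Cohen--Macaulay $R_0$-module, and the $R_i$ in the decomposition $R\cong\bigoplus_i R_i\otimes\rho_i$ are the nonfree indecomposable MCM modules. The dimension-comparison you suggest as an alternative therefore does not apply; the surjectivity of $S\to\End_{R_0}(R)$ genuinely requires the depth/reflexivity argument of Auslander (which you do end up citing, so this slip is not fatal, but it should not be offered as a ``better'' route). Second, your fullness argument asserts that $\sum_i f_i=1_{\C\Gamma}$, which is false unless all $\rho_i$ are one-dimensional (i.e.\ $\Gamma$ abelian, type $A$). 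In types $D$ and $E$, $f_i$ is a \emph{primitive} idempotent in the block $M_{\dim\rho_i}(\C)$ of $\C\Gamma$, so $f=\sum_i f_i$ is a proper idempotent. The correct reason $f$ is full is that its component in each Wedderburn block is a nonzero idempotent of a simple matrix algebra, hence $\C\Gamma f\,\C\Gamma=\C\Gamma$ and thus $SfS\supseteq S\cdot 1\cdot S=S$; equivalently, $\C\Gamma f\cong\bigoplus_i\rho_i$ is a generator of $\C\Gamma\mhyphen\mathrm{mod}$. Fix that justification and the rest of your outline goes through and agrees with the paper.
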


\begin{proof} The Morita equivalence is a bi-product of the proof of \cite[Proposition~2.13]{reiten1989two}, and may have been first stated explicitly in~\cite[Theorem~0.1]{crawley1998noncommutative}. The algebra isomorphism $f S f\cong B$ that in particular sends $f_i$ to $e_i$ for $0\leq i\leq r$ is described in~\cite[Theorem~3.4]{crawley1998noncommutative}. The description of~$S$ as $\End_{R_0}(R)$ follows from~\cite{Auslander62}, spelled out in
\cite[Theorem~5.12]{LeuschkeWiegand12}, while the description of~$B$ follows by applying the isomorphism $fSf\cong B$; see for example Buchweitz~\cite{Buchweitz}.
\end{proof}

\begin{Corollary}
\label{cor:moritaRiBi}
 For any $0\leq i\leq r$, there is an isomorphism $f_i S f_i\cong e_iBe_i$ of $\SC$-algebras, as well as an isomorphism $R_i \cong e_iBe_0$ of $e_iBe_i\text{-}e_0Be_0$-bimodules.
\end{Corollary}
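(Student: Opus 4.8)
The plan is to deduce both claims by cutting the Morita context of Proposition~\ref{prop_morita} down with the idempotents $f_i$. Recall from that proposition the explicit $\SC$-algebra isomorphism $\varphi\colon fSf\xrightarrow{\sim}B$ with $\varphi(f_i)=e_i$ for all $0\le i\le r$. Since each $f_i$ is one of the summands of $f=f_0+\cdots+f_r$, we have $ff_i=f_if=f_i$, so $f_iSf_j=f_i(fSf)f_j$ for every pair $i,j$, and hence $\varphi$ restricts to an isomorphism $f_iSf_j\xrightarrow{\sim}e_iBe_j$. Taking $j=i$ gives the first assertion, the $\SC$-algebra isomorphism $f_iSf_i\cong e_iBe_i$ (unital, as $\varphi(f_i)=e_i$); taking $j=0$ gives an isomorphism $f_iSf_0\cong e_iBe_0$ that intertwines the evident left $f_iSf_i$- and right $f_0Sf_0$-actions with the left $e_iBe_i$- and right $e_0Be_0$-actions, i.e.\ an isomorphism of $e_iBe_i$-$e_0Be_0$-bimodules.

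It remains to identify $f_iSf_0$ with $R_i$. By Lemma~\ref{lem:polyring} there is an isomorphism $Sf_0\cong R$ of left $S$-modules, hence of left $\SC\Gamma$-modules, so $f_iSf_0\cong f_iR$. On the other hand, evaluation at $f_i$ gives the standard isomorphism $\Hom_{\SC\Gamma}(\SC\Gamma f_i,M)\xrightarrow{\sim}f_iM$, natural in the left $\SC\Gamma$-module $M$; applied to $M=R$ together with $\SC\Gamma f_i\cong\rho_i$ this gives $R_i=\Hom_\Gamma(\rho_i,R)\cong f_iR$. Composing, $R_i\cong f_iSf_0\cong e_iBe_0$.

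The one point requiring genuine care — and thus the main, though essentially routine, obstacle — is that these identifications respect the two-sided module structures. I would verify this through the descriptions $S\cong\End_{R_0}(R)$ and $B\cong\End_{R_0}\big(\bigoplus_{0\le j\le r}R_j\big)$ of Proposition~\ref{prop_morita}. Under the first, $R=\bigoplus_{0\le j\le r}R_j\otimes\rho_j$ and the idempotent $f_i\in\SC\Gamma\subset S$ acts as zero on $R_j\otimes\rho_j$ for $j\neq i$ and as $\mathrm{id}_{R_i}\otimes q_i$ on $R_i\otimes\rho_i$ for a rank-one idempotent $q_i\in\End_\SC(\rho_i)$ (this follows from $\SC\Gamma f_i\cong\rho_i$); consequently $f_iSf_i$ is carried onto $\End_{R_0}(R_i)$ and $f_iSf_0$ onto $\Hom_{R_0}(R_0,R_i)=R_i$ with its tautological $\End_{R_0}(R_i)$-$R_0$-bimodule structure. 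The matching statements for $e_iBe_i$ and $e_iBe_0$ follow from the description of $B$, and $\varphi$ identifies the two pictures. Granting this compatibility, the corollary is a formal consequence of Proposition~\ref{prop_morita} and Lemma~\ref{lem:polyring}.
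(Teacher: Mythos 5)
Your argument is correct, and for the algebra isomorphism $f_iSf_i\cong e_iBe_i$ you proceed exactly as the paper does: cut the Morita isomorphism $fSf\cong B$ down by the idempotents $f_i$ and $e_i$. For the bimodule part your route is genuinely different and slightly longer: you first cut with $f_i$, $f_0$ to get $f_iSf_0\cong e_iBe_0$ as a bimodule isomorphism (which is automatic since $\varphi$ is an algebra isomorphism), then use Lemma~\ref{lem:polyring} to identify $f_iSf_0\cong f_iR$ and the standard Hom-idempotent adjunction to get $f_iR\cong\Hom_\Gamma(\rho_i,R)=R_i$, and finally check that this chain respects the two-sided module structures by passing to the endomorphism descriptions. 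The paper is more direct for this second half: it simply applies $e_i$ on the left and $e_0$ on the right of the isomorphism $B\cong\End_{R_0}\big(\bigoplus_{0\le j\le r}R_j\big)$ to read off $e_iBe_0\cong\Hom_{R_0}(R_0,R_i)\cong R_i$, and the bimodule structure comes for free from functoriality of that cutting. What your detour through the skew group algebra side buys is the useful explicit identification $R_i\cong f_iSf_0\cong f_iR$, but the price is a compatibility check, and the verification you sketch in your third paragraph amounts precisely to the paper's one-line argument anyway. In short: correct, a touch more circuitous, and ultimately resting on the same lever, namely the description $B\cong\End_{R_0}\big(\bigoplus_j R_j\big)$ from Proposition~\ref{prop_morita}.
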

\begin{proof}
Apply $f_i$ to both sides of $fSf$ while simultaneously applying $e_i$ to both sides of $B$; the algebra isomorphism $fSf\cong B$ then restricts to the algebra isomorphism $f_i S f_i\cong e_iBe_i$. For the second statement, apply $e_i$ and $e_0$ to the left and right respectively of the isomorphism $B\cong \End_{R_0}\big({\bigoplus}_{0\leq i\leq r} R_i\big)$ to obtain $R_i\cong \Hom_{R_0}(R_0,R_i)\cong e_i B e_0$ as required.
\end{proof}

We finally look at how the Morita equivalence $\Phi$ acts on dimension vectors.

\begin{Corollary}\label{cor_fd}
Given an $S$-module of the form $R/J$, with dimension vector $\sum_{0\leq i\leq r} v_i \rho_i$, its image under $\Phi$ is a $B$-module of the form $B e_0/fJ$ with dimension vector $(v_i)\in\mathbb{Z}^{r+1}$.
\end{Corollary}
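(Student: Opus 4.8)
The plan is to compute $\Phi(R/J)=f(R/J)$ explicitly and then extract its dimension vector from the correspondence of vertex idempotents under the Morita equivalence of Proposition~\ref{prop_morita}.

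First I would use Lemma~\ref{lem:polyring} to view $R$ as the left $S$-module $Sf_0$, so that $J$ is a left $S$-submodule of $Sf_0$. Since $f$ is idempotent, multiplication by $f$ is exact on left $S$-modules, giving $\Phi(R/J)=f(R/J)=fR/fJ$ (one checks $fR\cap J=fJ$ directly using $f^2=f$). Now $f_0=f_0f$ because the $f_i$ are orthogonal idempotents with $f=\sum_{0\le i\le r}f_i$, so $fR\cong fSf_0=(fSf)f_0$; transporting this along the algebra isomorphism $fSf\cong B$ from Proposition~\ref{prop_morita}, which sends $f_i$ to $e_i$, identifies $fR$ with the left $B$-module $Be_0$. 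Under this identification $fJ$ becomes a $B$-submodule of $Be_0$, whence $\Phi(R/J)\cong Be_0/fJ$, as required.

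It then remains to compute the dimension vector. The $i$-th entry of the dimension vector of a $B$-module $N$ is $\dim_\SC e_iN$, and since $e_i$ corresponds to $f_i$ under $fSf\cong B$ we have $e_i\Phi(R/J)=f_i(R/J)$. Restricting the $S$-action along $\SC\Gamma\subseteq S$, the hypothesis on the dimension vector says exactly that $R/J\cong\bigoplus_{0\le j\le r}\rho_j^{\oplus v_j}$ as a $\SC\Gamma$-module, so it suffices to note $\dim_\SC f_i\rho_j=\delta_{ij}$. This follows from $\rho_j\cong\SC\Gamma f_j$ and the fact that $f_i$ is a primitive idempotent in the Wedderburn block of $\SC\Gamma$ indexed by $\rho_i$, so that $f_i\SC\Gamma f_j$ equals $\SC f_i$ if $i=j$ and vanishes otherwise. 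Hence $\dim_\SC f_i(R/J)=v_i$, and the dimension vector of $\Phi(R/J)$ is $(v_i)\in\mathbb{Z}^{r+1}$.

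The argument is essentially bookkeeping, so I do not expect a genuine obstacle; the one place to be a little careful is checking that $fJ$ is a well-defined $B$-submodule of $Be_0$ under the chain of identifications above. The small representation-theoretic fact $\dim_\SC f_i\rho_j=\delta_{ij}$ could alternatively be read off from the bimodule isomorphism $R_i\cong e_iBe_0$ of Corollary~\ref{cor:moritaRiBi} together with the decomposition $R\cong\bigoplus_j R_j\otimes_\SC\rho_j$.
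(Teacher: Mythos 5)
Your argument is correct, and the first half — identifying $\Phi(R/J)$ with $Be_0/fJ$ via Lemma~\ref{lem:polyring} and the computation $fR\cong fSf_0=(fSf)f_0\cong Be_0$ — is essentially identical to the paper's. Where you diverge is in the dimension-vector computation: the paper disposes of it abstractly, by observing that the Morita equivalence $\Phi$ induces a $\mathbb{Z}$-linear isomorphism of Grothendieck groups, identified with $\Rep(\Gamma)$ on the $S$-side and $\mathbb{Z}^{r+1}$ on the $B$-side, so dimension vectors match by definition. You instead compute directly: $e_i\Phi(R/J)=f_i(R/J)$, restrict $R/J$ to a $\SC\Gamma$-module isomorphic to $\bigoplus_j \rho_j^{\oplus v_j}$, and use the elementary fact $\dim_\SC f_i\rho_j=\delta_{ij}$ coming from $f_i$ being a primitive idempotent in the Wedderburn block of $\rho_i$. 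Both routes are valid; yours is more explicit and self-contained, while the paper's is shorter because it invokes the standard functoriality of Morita equivalence on Grothendieck groups. One very minor remark: once you invoke exactness of $f\cdot(-)$, the isomorphism $f(R/J)\cong fR/fJ$ follows immediately from applying $f$ to the short exact sequence $0\to J\to R\to R/J\to 0$, so the parenthetical check $fR\cap J=fJ$ is unnecessary.
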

\begin{proof}
Applying Lemma~\ref{lem:polyring} shows that $\Phi$ sends the polynomial ring $R\cong Sf_0$ to $fSf_0 =(fSf)f_0 = Be_0$. It follows that for any
$S$-submodule
$J\subseteq R$, the quotient $R/J$ is an $S$-module whose image under $\Phi$ is $Be_0/fJ$. Finally, $\Phi$ induces a $\mathbb{Z}$-linear isomorphism between the Grothendieck groups of the categories of finite-dimensional left modules over $S$ and $B$ which we identify with the representation ring $\Rep(\Gamma)$ and with $\mathbb{Z}^{r+1}$ respectively.
\end{proof}

\section{Orbifold Quot schemes}\label{sec:quot}

\subsection{Quot schemes for modules over associative algebras}\label{sect:generalquot}

Let~$H$ denote an arbitrary finitely generated,
not necessarily commutative $\SC$-algebra, and let~$M$ be a finitely
generated left $H$-module.
For a scheme $X$, we denote by $H_{X}$ the sheaf of $\mathcal{O}_X$-algebras
$H \otimes \mathcal{O}_X$, where $H$ is considered as a constant sheaf on $X$. Similarly, denote by $M_{X}$ the sheaf of $H_{X}$-modules
$M \otimes \mathcal{O}_X$.

Consider the functor
\[
\mathcal{Q}^{n}_H(M)\colon \ \mathrm{Sch}^{\mathrm{op}} \to \mathrm{Sets} \]
sending a scheme $X$ to the set of isomorphism classes of left $H_{X}$-modules $Z$ equipped with a~surjective $H_{X}$-module homomorphism
$M_{X} \to Z$ such that $Z$,
when considered as an $\mathcal{O}_{X}$-module, is locally free of rank~$n$.

The following result may be known to experts, though we could not find it in the literature.

\begin{Proposition}
\label{prop:Qnrep}
The functor $\mathcal{Q}^{n}_H(M)$ is represented by a scheme $\Quot^{n}_H(M)$
of finite type over~$\SC$.
\end{Proposition}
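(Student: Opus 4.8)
The plan is to reduce the representability of $\mathcal{Q}^n_H(M)$ to the classical representability of a Quot functor over $\SC$, by forgetting the $H$-module structure and remembering only the $\SC$-linear data together with the constraints imposed by the algebra action. Concretely, choose a finite generating set $m_1,\dots,m_k$ of $M$ as an $H$-module; this gives a surjection $H^{\oplus k}\twoheadrightarrow M$ of left $H$-modules, hence for every scheme $X$ a surjection $H_X^{\oplus k}\twoheadrightarrow M_X$. A quotient $M_X\twoheadrightarrow Z$ as in the functor is then the same as a quotient $H_X^{\oplus k}\twoheadrightarrow Z$ that factors through $M_X$, i.e. that kills the image of a fixed finite set of relations. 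So it suffices to represent the functor of rank-$n$ locally free $H_X$-module quotients of $H_X^{\oplus k}$, and then cut out a closed (or locally closed) subscheme by the relations.

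First I would handle the free case $M=H^{\oplus k}$. Here the forgetful map to the ordinary coherent-sheaf Quot functor $\Quot^n(\mathcal{O}^{\oplus k})$ — which is representable, being a disjoint union of Grassmannian-type schemes, or more precisely representable by Grothendieck's Quot scheme since we are over a point and the sheaves have finite length — is a monomorphism of functors: an $H_X$-module quotient is a coherent quotient which happens to be $H_X$-stable. The condition that a given coherent quotient $q\colon \mathcal{O}_X^{\oplus k}\twoheadrightarrow Z$ be compatible with the $H$-action is closed: for each algebra generator $h$ of $H$ (pick finitely many, since $H$ is finitely generated), the action of $h$ on $H_X^{\oplus k}$ descends to $Z$ iff $q\circ(h\cdot-)$ vanishes on $\ker q$; since $\ker q$ and $Z$ are, locally on the universal Quot scheme, vector bundles of complementary ranks, this is the vanishing of a morphism of vector bundles, hence cut out by a closed subscheme. (One also checks that the finitely many relations among generators of $H$, together with associativity/unitality, impose no further conditions once the generators act compatibly — the induced maps automatically satisfy the same relations.) This produces a closed subscheme $\Quot^n_H(H^{\oplus k})\subseteq\Quot^n(\mathcal{O}^{\oplus k})$ representing the free-case functor, and it is of finite type over $\SC$ because $\Quot^n(\mathcal{O}^{\oplus k})$ is.

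Then for general finitely generated $M$, realize $M$ as $H^{\oplus k}/N$ with $N$ finitely generated, say generated by $n_1,\dots,n_l\in H^{\oplus k}$. A quotient $H_X^{\oplus k}\twoheadrightarrow Z$ descends to $M_X$ iff each $n_j$ maps to $0$ in $Z$, and on the universal quotient over $\Quot^n_H(H^{\oplus k})$ this is again the vanishing of finitely many sections of the (locally free, rank $n$) universal quotient sheaf, hence a closed condition. Thus $\Quot^n_H(M)$ is a closed subscheme of $\Quot^n_H(H^{\oplus k})$, again of finite type over $\SC$. The Yoneda-style verification that the resulting scheme represents $\mathcal{Q}^n_H(M)$ — i.e. that the universal quotient over it pulls back correctly and that morphisms of schemes correspond to morphisms of families — is formal once each step above is phrased as a representable closed subfunctor.

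I expect the main obstacle to be the second step: checking carefully that ``$H_X$-stability of a coherent quotient'' is a \emph{closed} condition cut out by vanishing of vector-bundle maps, rather than merely a constructible one. The subtlety is that $\ker q$ need not be locally free globally on $\Quot^n(\mathcal{O}^{\oplus k})$ unless one works over the open-and-closed locus where $Z$ has constant rank $n$ — which is exactly the part of the Quot scheme we want — and there one must use that a surjection of $\mathcal{O}_X$-modules onto a locally free sheaf of rank $n$ is locally split, so $\ker q$ is a direct summand and hence locally free. Granting that, the condition $q\circ(h\cdot-)|_{\ker q}=0$ is visibly closed, and the rest is bookkeeping.
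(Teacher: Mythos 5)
Your reduction to the free case, by presenting $M$ as a quotient of $H^{\oplus k}$ and imposing the relations $\ker(H^{\oplus k}\to M)$ as closed conditions on a universal quotient, mirrors the paper's step of fixing $\psi\colon H^r\to M$ and cutting $\mathcal{Q}^n_H(M)$ out of $\mathcal{Q}^n_H(H^r)$ as a closed subfunctor; that part is sound. (The paper additionally first replaces $H$ by a free noncommutative cover $H^+$, observing $\mathcal{Q}^n_H=\mathcal{Q}^n_{H^+}$, but that is a cosmetic difference.)

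The genuine gap is in your treatment of the free case. You claim a monomorphism from $\mathcal{Q}^n_H\big(H^{\oplus k}\big)$ to the classical coherent Quot functor $\Quot^n\big(\mathcal{O}^{\oplus k}\big)$, ``an $H_X$-module quotient is a coherent quotient which happens to be $H_X$-stable.'' But the source of the quotients in $\mathcal{Q}^n_H\big(H^{\oplus k}\big)$ is $H_X^{\oplus k}=H\otimes\mathcal{O}_X^{\oplus k}$, which as an $\mathcal{O}_X$-module has infinite rank whenever $H$ is infinite-dimensional over $\SC$ (the typical case here, e.g.\ $H=B_I$). The natural map $\mathcal{O}_X^{\oplus k}\to Z$ sending the standard basis to the images of the module generators of $H^{\oplus k}$ is \emph{not} surjective in general: those images generate $Z$ only as an $H_X$-module, not as an $\mathcal{O}_X$-module. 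So the proposed forgetful map to $\Quot^n\big(\mathcal{O}^{\oplus k}\big)$ is simply not defined, and the rest of the free-case argument --- cutting out $H$-stability as a vanishing locus of bundle maps over $\Quot^n\big(\mathcal{O}^{\oplus k}\big)$ --- has nothing to stand on. Your self-flagged worry about whether $H_X$-stability is a closed condition is downstream of this more basic problem. One way to repair this would be to observe that for a rank-$n$ quotient, the images of all words of length at most $n-1$ in the algebra generators applied to the $k$ module generators span $Z$ over $\mathcal{O}_X$, giving a surjection from a trivial bundle of some large finite rank $N$ and an embedding into $\Quot^n\big(\mathcal{O}^{\oplus N}\big)$; but that is precisely the content of the noncommutative Quot/Hilbert scheme construction, and the paper outsources exactly this step to \cite[Theorem~2.5]{beentjes2018virtual} for a free noncommutative $H$, rather than trying to rederive it from Grothendieck's coherent Quot scheme.
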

\begin{proof}
As $H$ is finitely generated as a $\SC$-algebra, we can fix a surjection $H^+\to H$ with $H^+$ a free noncommutative $\SC$-algebra on a finite number of generators.
Then $M$ is also a left $H^+$-module. Define $J\coloneqq \ker\big(H^+ \to H\big)$. Consider a left $H^+$-module $Z$ that is a quotient of~$M$ via a surjective $H^+$-module morphism $M \to Z$.

Since any element of the two-sided ideal $J$ of $H^+$ acts trivially on $M$, and so also on quotients of $M$, $Z$ is automatically an $H$-module. Conversely, any quotient of $M$ as an $H$-module is automatically an $H^{+}$-module. It follows that there is a canonical isomorphism of functors $\mathcal{Q}^n_H(M) = \mathcal Q^n_{H^+}(M)$.

Thus, we may assume that $H$ is a finitely generated and free $\C$-algebra.
Fix a left $H$-module surjection $\psi\colon H^r \to M$ for some $r$.
 Write elements of $\ker \psi$ in the form $\sum_{1\leq k \leq r} w_{k} e_k $, where the $e_k \in H^r$ are the standard module generators and $w_k \in H$.
 For any quotient module $q\colon M \to Z$, composition with $\psi$ presents $Z$ as a quotient $\tilde q\colon H^r \to Z$,
 such that $\ker \psi\subseteq\ker\tilde q$.
 The equations $\sum_{1\leq k \leq r} w_{k} \tilde q(e_k)=0$ give
 a closed condition on $[\tilde q]\in \Quot^n_H(H^r)$: the vanishing of a collection of vectors in the vector space underlying~$Z$.
 When trivialising the universal sheaf on some open cover of $Q^n_H(H^r)$, these closed conditions glue together, and
 we see that this construction realises
\[\mathcal{Q}^n_H(M) \subseteq \mathcal{Q}^n_H(H^r)\]
as a closed subfunctor.

To conclude, it suffices to show that $\mathcal{Q}^n_H(H^r)$ is representable for a free noncommutative $\SC$-algebra $H$. This follows from~\cite[Theorem 2.5]{beentjes2018virtual}; while that paper discusses the case when $H$ is freely generated by $3$ elements, the proof generalises to finitely many generators.
\end{proof}

\subsection{Quot schemes for Kleinian orbifolds}
\label{subsec:Quotorbi}
Let $\PiGamma$ be the preprojective algebra of the unframed McKay quiver and $R=\SC[x,y]$ as
in Section~\ref{subsec:mckay}.
Recall that $R_0=R^\Gamma$, and that for each $0\leq i \leq r$, the
$R_0$-module $R_i\coloneqq\Hom_\Gamma(\rho_i,R)$ satisfies $R_i\cong e_iBe_0$.

For any subset $I \subseteq \{0,\dots,r\}$,
consider the idempotent
$e_I\coloneqq\sum_{i \in I} e_i$ in $B$,
and define the $\SC$-algebra
\begin{equation} \label{eqn:BI}
\PiGamma_I\coloneqq e_I \PiGamma e_I \end{equation}
comprising linear combinations of
classes of paths in $Q_\Gamma$ whose tails and heads lie in the set~$I$. The process of passing
from $\PiGamma$ to $\PiGamma_I$ is called \emph{cornering}; see \cite[Remark 3.1]{craw2018multigraded}.
Then \[ R_I\coloneqq\bigoplus_{i \in I} R_i \cong e_I \PiGamma e_0\] is naturally a finitely generated left $\PiGamma_I$-module.

For a given dimension vector $n_I=(n_i) \in \mathbb{N}^{I}$,
consider the contravariant functor
\[
\mathcal{Q}^{n_I}_{\PiGamma_I}(R_I)\colon \ \mathrm{Sch}^{\mathrm{op}} \to \mathrm{Sets} \]
sending a scheme $X$ to the set of isomorphism classes of left $\PiGamma_{I,X}$-modules $Z$ equipped with a~surjective $\PiGamma_{I,X}$-module homomorphism
\[
\phi_Z\colon \ R_I
\otimes \mathcal{O}_X \to Z
\]
such that each submodule $e_iZ$, {for $i\in I$,}
when considered as an $\mathcal O_{X}$-module, is locally free of rank $n_i$. Note that we can recover $Z$ from these submodules via
\begin{equation*}
Z=\bigoplus_{i\in I} e_i Z.
\end{equation*}

The next result provides the link between this functor and that introduced in Section~\ref{sect:generalquot}.
\begin{Proposition} There is a finite decomposition
\begin{equation}\mathcal{Q}^{n}_{\PiGamma_I}(R_I)=\coprod_{\substack{n_I\\ \sum_{i \in I} n_i =n}} \mathcal{Q}^{n_I}_{\PiGamma_I}(R_I)
\label{eq:funct_decomp}\end{equation}
into open and closed subfunctors.
In particular, each functor
$\mathcal{Q}^{n_I}_{\PiGamma_I}\big(R_I\big)$ is represented by a scheme $\QuotInI\big(\big[\SC^2/\Gamma\big]\big)$ of finite type over $\SC$.
\end{Proposition}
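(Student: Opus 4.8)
The plan is to deduce everything from Proposition~\ref{prop:Qnrep}, applied to $H=\PiGamma_I$ and $M=R_I$, together with the orthogonal idempotent decomposition $1=e_I=\sum_{i\in I}e_i$ inside $\PiGamma_I=e_I\PiGamma e_I$. First I would check the hypotheses of Proposition~\ref{prop:Qnrep}: that $\PiGamma_I$ is a finitely generated $\SC$-algebra and that $R_I$ is a finitely generated left $\PiGamma_I$-module. The latter is recorded just above. For the former, by Proposition~\ref{prop_morita} the algebra $\PiGamma\cong\End_{R_0}\big(\bigoplus_{0\le i\le r}R_i\big)$ is module-finite over its central subring $R_0=R^\Gamma$, which is itself a finitely generated $\SC$-algebra; since $e_I$ commutes with $R_0\cdot 1$, the ring $R_0$ maps into the centre of $\PiGamma_I$, and $\PiGamma_I$ is an $R_0$-module direct summand of $\PiGamma$ (via the Peirce decomposition), hence module-finite over $R_0$ and therefore a finitely generated $\SC$-algebra. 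Proposition~\ref{prop:Qnrep} then gives a scheme $\Quot^n_{\PiGamma_I}(R_I)$ of finite type over $\SC$ representing $\mathcal{Q}^n_{\PiGamma_I}(R_I)$.

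Next I would produce the decomposition \eqref{eq:funct_decomp} at the level of functors. Given a scheme $X$ and an object $(Z,\phi_Z)$ of $\mathcal{Q}^n_{\PiGamma_I}(R_I)(X)$, the orthogonal idempotents $e_i$, $i\in I$, split $Z=\bigoplus_{i\in I}e_iZ$ as a sheaf of $\mathcal{O}_X$-modules, compatibly with the corresponding splitting $R_I\otimes\mathcal{O}_X=\bigoplus_{i\in I}\big(R_i\otimes\mathcal{O}_X\big)$ of the source, since $e_iR_I=R_i$. Each $e_iZ$ is an $\mathcal{O}_X$-module direct summand of the locally free sheaf $Z$ of rank $n$, hence is itself locally free of some locally constant rank $n_i\colon X\to\mathbb{N}$, and $\sum_{i\in I}n_i=n$ holds pointwise. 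Therefore $X$ is the disjoint union of the open and closed subschemes $X_{n_I}$ on which the rank vector equals a fixed $n_I\in\mathbb{N}^I$, and the restriction of $(Z,\phi_Z)$ to $X_{n_I}$ is an object of $\mathcal{Q}^{n_I}_{\PiGamma_I}(R_I)(X_{n_I})$; conversely every such object arises this way. This is precisely the assertion that $\mathcal{Q}^{n_I}_{\PiGamma_I}(R_I)$ is an open and closed subfunctor of $\mathcal{Q}^n_{\PiGamma_I}(R_I)$ and that \eqref{eq:funct_decomp} holds, the union being finite because only finitely many $n_I\in\mathbb{N}^I$ satisfy $\sum_{i\in I}n_i=n$.

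Finally, an open and closed subfunctor of a representable functor is represented by an open and closed subscheme, so each $\mathcal{Q}^{n_I}_{\PiGamma_I}(R_I)$ is represented by an open and closed subscheme $\QuotInI\big(\big[\SC^2/\Gamma\big]\big)$ of $\Quot^n_{\PiGamma_I}(R_I)$, again of finite type over $\SC$. The only inputs that are not purely formal are that cornering preserves finite generation of the algebra — here not a general fact but a consequence of module-finiteness over $R_0$ — and the elementary observation that an $\mathcal{O}_X$-module direct summand of a locally free sheaf of finite rank is again locally free of finite, possibly non-constant, rank. I expect this last point, and the verification that the induced rank stratification behaves well in families (so that the $X_{n_I}$ are genuinely open and closed and the construction is functorial), to be the only places requiring genuine care; the rest is bookkeeping.
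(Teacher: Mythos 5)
Your proof is correct and follows essentially the same route as the paper: establish the splitting into open and closed subfunctors, then invoke Proposition~\ref{prop:Qnrep} for representability. One small improvement you make is to explicitly verify that $\PiGamma_I$ is a finitely generated $\SC$-algebra (via module-finiteness over $R_0$), a hypothesis of Proposition~\ref{prop:Qnrep} that the paper's proof takes for granted; you also make the local-constancy step cleaner by using the orthogonal idempotent decomposition $Z=\bigoplus_{i\in I}e_iZ$ directly, rather than the paper's brief appeal to semicontinuity.
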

\begin{proof} The dimension vector $n_I=(n_i) \in \mathbb{N}^{I}$ is locally constant in a flat family of $\PiGamma_I$-modules of fixed rank~$n$, with each of the entries being lower semicontinuous by the Schur lemma.
So we have a decomposition~\eqref{eq:funct_decomp} into open and closed subfunctors.
As $R_I$ is finitely generated as a left $B_I$-module, the functor on the left hand side of~\eqref{eq:funct_decomp} is represented by a scheme of finite type over $\SC$ by Proposition~\ref{prop:Qnrep}. The last claim then follows.
\end{proof}

When the index set is a singleton $I=\{i\}$, we
often simply write $\Quotini \big(\big[\SC^2/\Gamma\big]\big)$ for $\Quot_{\{i\}}^{n_{\{i\}}}\big(\big[\SC^2/\Gamma\big]\big)$. See Section~\ref{sect_quot_quot} for a
discussion of this special case.

As is common with Hilbert schemes in other contexts, we
consider collections of our Quot schemes for all
possible dimension vectors. Define the
orbifold Quot scheme for $\big[\SC^2/\Gamma\big]$ and for the index set $I$ to be
\[ \QuotI \big(\big[\SC^2/\Gamma\big]\big)\coloneqq\coprod_{n_I} \QuotInI\big(\big[\SC^2/\Gamma\big]\big) \]
where $n_I=(n_i)_{i \in I} \in \mathbb{N}^{I}$ as before.
Again, when $I=\{i\}$ is a singleton, we simply write
\[
\Quoti \big(\big[\SC^2/\Gamma\big]\big)\coloneqq\coprod_{n_i\in \mathbb{N}}\Quotini \big(\big[\SC^2/\Gamma\big]\big).\]

 Denote by $\Hilb\big(\big[\SC^2/\Gamma\big]\big)$ the Hilbert scheme of $\Gamma$-invariant finite colength subschemes of~$\SC^2$. As explained in more detail in \cite[Section~1.1]{gyenge2018euler}, this space decomposes as a disjoint union of quasi-projective varieties
\[\Hilb\big(\big[\SC^2/\Gamma\big]\big) = \coprod_{v\in \mathbb{N}^{r+1}}\Hilb^v\big(\big[\SC^2/\Gamma\big]\big),
\]
where \[\Hilb^v\big([\SC^2/\Gamma]\big) =\Big\{J\in \Hilb\big(\big[\SC^2/\Gamma\big]\big)\mid H^0\big(\mathcal{O}_{\SC^2}/J\big) \cong \bigoplus_{i\in \{0, \dots, r\}} \rho_i^{\oplus v_i}\Big\}.\]

\begin{Lemma}\label{lem:HilbMor}
For $I=\{0,\dots,r\}$, there is an isomorphism
\[\QuotI \big(\big[\SC^2/\Gamma\big]\big) \cong \Hilb\big(\big[\SC^2/\Gamma\big]\big)\]
which respects the decompositions on both sides into pieces indexed by $v\in \mathbb{N}^{r+1}$.
\end{Lemma}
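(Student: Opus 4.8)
The plan is to identify both sides with a single moduli functor by means of the Morita equivalence of Proposition~\ref{prop_morita}. For $I=\{0,\dots,r\}$ the idempotent $e_I=\sum_{0\le i\le r}e_i$ is the identity of $B$, so $B_I=B$, and since $R_i\cong e_iBe_0$ (Corollary~\ref{cor:moritaRiBi}) we have $R_I=\bigoplus_{0\le i\le r}R_i\cong e_IBe_0=Be_0$ as a left $B$-module. Thus $\QuotI\big(\big[\SC^2/\Gamma\big]\big)$ represents the functor sending a scheme $X$ to the set of isomorphism classes of surjections of $B_X$-modules $(Be_0)_X\to Z$ with each $e_iZ$ locally free of finite rank over $\mathcal O_X$, the piece $\QuotInI\big(\big[\SC^2/\Gamma\big]\big)$ being where $e_iZ$ has rank $n_i$. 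On the other hand, a flat family over $X$ of $\Gamma$-invariant finite colength subschemes of $\SC^2$ is a $\Gamma$-stable quasi-coherent ideal sheaf $\mathcal J\subseteq\mathcal O_{\SC^2}\otimes\mathcal O_X=R_X$ with $R_X/\mathcal J$ locally free of finite rank over $\mathcal O_X$; such a $\mathcal J$ is automatically an $R_X$-submodule, so being $\Gamma$-stable it is exactly an $S_X$-submodule. Hence $\Hilb\big(\big[\SC^2/\Gamma\big]\big)$ represents the functor sending $X$ to the set of isomorphism classes of surjections of $S_X$-modules $R_X\to\mathcal Z$ with $\mathcal Z$ locally free of finite rank over $\mathcal O_X$, the piece $\Hilb^v\big(\big[\SC^2/\Gamma\big]\big)$ being where $\Hom_\Gamma(\rho_i,\mathcal Z)$ has rank $v_i$ fibrewise.

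Next I would make the Morita equivalence relative. The functor $\Phi$ of Proposition~\ref{prop_morita} is $fS\otimes_S(-)$, with quasi-inverse $\Psi\coloneqq Sf\otimes_B(-)$; since $f\in S$ is an idempotent, $fS$ and $Sf$ are direct summands of $S$ as a right, respectively left, $S$-module. It follows that $\Phi$ and $\Psi$ are exact, that for each scheme $X$ they induce mutually inverse equivalences $\Phi_X\colon S_X\module\to B_X\module$ and $\Psi_X\colon B_X\module\to S_X\module$ naturally in $X$, and that --- because $\Phi_X(\mathcal M)$ is a direct summand of $\mathcal M$ as an $\mathcal O_X$-module, and symmetrically for $\Psi_X$ --- they preserve the class of modules that are locally free of finite rank over $\mathcal O_X$. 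By Lemma~\ref{lem:polyring} we have $\Phi(R)=\Phi(Sf_0)=fSf_0=Be_0$ and $\Psi(Be_0)=Sf_0\cong R$, so $\Phi_X$ and $\Psi_X$ carry $S_X$-module quotients of $R_X$ to $B_X$-module quotients of $(Be_0)_X$ and back. Applying them to the universal quotients on the two representing schemes produces, by functoriality, a pair of mutually inverse natural transformations between the two functors of the first paragraph, hence the isomorphism $\QuotI\big(\big[\SC^2/\Gamma\big]\big)\cong\Hilb\big(\big[\SC^2/\Gamma\big]\big)$.

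Finally I would match the decompositions. For a quotient $\mathcal Z$ of $R_X$ one has $e_i\Phi_X(\mathcal Z)=f_i\mathcal Z=\Hom_\Gamma(\rho_i,\mathcal Z)$, using $f_if=f_i$ in $S$ and that the isomorphism $fSf\cong B$ sends $f_i$ to $e_i$; conversely $f_i\Psi_X(Z)=e_iZ$. As these ranks are locally constant, the isomorphism restricts to $\Hilb^v\big(\big[\SC^2/\Gamma\big]\big)\cong\QuotInI\big(\big[\SC^2/\Gamma\big]\big)$ with $n_I=v$, which on $\SC$-points is precisely the content of Corollary~\ref{cor_fd}. I expect the main obstacle to be this relative layer: pinning down the functor of points of the equivariant Hilbert scheme (so that a $\Gamma$-invariant finite colength subscheme in a family literally becomes an $S_X$-module quotient of $R_X$ locally free over $\mathcal O_X$) and verifying that the Morita functors commute with base change and preserve local freeness of finite rank over $\mathcal O_X$. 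Granting these, the remaining verification --- that the locally constant ranks of the pieces $e_iZ$ correspond --- is routine.
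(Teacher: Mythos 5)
Your proposal is correct and takes the same route as the paper: identify $B_I=B$ and $R_I\cong Be_0$, transport quotient families across the Morita equivalence $\Phi$ of Proposition~\ref{prop_morita} using Lemma~\ref{lem:polyring}, and match the decompositions via the identification $e_i\Phi_X(\mathcal Z)=f_i\mathcal Z\cong\Hom_\Gamma(\rho_i,\mathcal Z)$ as in Corollary~\ref{cor_fd}. The paper states this in a single sentence; you have simply spelled out the relative (functor-of-points) version, including exactness of $\Phi$, $\Psi$ and preservation of local freeness, which is exactly the content implicitly invoked.
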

\begin{proof} This follows from the Morita equivalence between the algebras $S$
and $\PiGamma$ from Proposition~\ref{prop_morita}, and
its amplification Corollary~\ref{cor_fd}, together with the fact that a finite colength left $S$-submodule of $R=\SC[x,y]$ is nothing but a finite colength $\Gamma$-invariant ideal of $\SC[x,y]$.
\end{proof}

\subsection{Quot schemes for Kleinian singularities}
\label{sect_quot_quot}
In this section, we discuss the relation of the construction from Section~\ref{subsec:Quotorbi} to the more classical (commutative) version of the Quot scheme, in the case when $I=\{i\}$ is a singleton.
Note that in this case, $R_i\cong e_i\PiGamma e_0$
 is both a left $B_i$-module and a right $R_0\cong e_0\PiGamma e_0$-module.

Let $\Quoti \big(\SC^2/\Gamma\big)$ be the scheme parameterising finite codimension $R^{\Gamma}$-submodules of $R_i$
or, equivalently, finite colength subsheaves of the coherent sheaf on the Kleinian singularity $\SC^2/\Gamma$ corresponding to $R_i$.
Again, this space decomposes as
\[ \Quoti \big(\SC^2/\Gamma\big)=\coprod_{n_i\in \mathbb{N}} \Quotini \big(\SC^2/\Gamma\big) \]
with $\Quotini \big(\SC^2/\Gamma\big) = \big\{M \in \Quoti \big(\SC^2/\Gamma\big) \mid
\mathrm{dim}\,R_i/M =n_i
\big\}$.

Suppose that $i\in\{0,\ldots, r\}$ satisfies $\dim \rho_i =1$. Then there is a diagram automorphism of $Q_\Gamma$ taking vertex~$i$ to vertex~$0$. Pick any such
automorphism $\iota$ and let
$\iota(0) \in \{0,\dots,r\}$ denote the vertex to which the
vertex $0$ is mapped under this diagram automorphism. This transformation of the diagram induces an algebra isomorphism \[B_i=e_i \PiGamma e_i\cong e_0 \PiGamma e_0=B_0.\]
Also, it induces a bijection between left $B_i$-submodules of $e_iBe_0$ and left $B_0$-submodules of $e_{0}Be_{\iota(0)}$. Reversing the arrows gives an algebra automorphism $B_0 \xrightarrow{\sim} B_0$ as well as a bijective correspondence between left $B_0$-submodules of $e_{0}Be_{\iota(0)}$ and right $B_0$-submodules of $e_{\iota(0)}Be_{0}$. Putting these two correspondences together, we get a bijection between the left $B_i$-submodules of $R_i=e_iBe_0$ and right $B_0$-submodules of $R_{\iota(0)}=e_{\iota(0)}Be_{0}$.

\begin{Proposition}\label{prop:Quotcommiso}
Suppose that $I=\{i\}$ is a singleton such
that $\dim \rho_i =1$. Let $n_i \in \mathbb{N}$
be a~dimension vector for the index set $\{i\}$, and denote
by $n_{\iota(0)}$ the same dimension vector for the
index set $\{\iota(0)\}$ with $\iota(0)$
as above. Then $\Quotini \big(\big[\SC^2/\Gamma\big]\big)$ is isomorphic
to $\Quot_{\iota(0)}^{n_{\iota(0)}}\big(\SC^2/\Gamma\big)$.
\end{Proposition}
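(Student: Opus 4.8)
The plan is to lift the set-theoretic bijection constructed in the paragraph preceding the statement to an isomorphism of the functors represented by the two schemes; the conclusion then follows by Yoneda's lemma. Recall that $\Quotini\big(\big[\SC^2/\Gamma\big]\big)$ represents the functor sending a scheme $X$ to the surjections $R_i\otimes\mathcal{O}_X\twoheadrightarrow Z$ of left $B_{i,X}$-modules with $Z$ locally free of rank $n_i$ over $\mathcal{O}_X$ (note $Z=e_iZ$, as $I=\{i\}$), while $\Quot_{\iota(0)}^{n_{\iota(0)}}\big(\SC^2/\Gamma\big)$ is the usual Grothendieck Quot scheme of the coherent sheaf on $\mathrm{Spec}\,R^\Gamma$ associated with $R_{\iota(0)}$, representing the functor of flat families of length-$n_{\iota(0)}$ quotients, equivalently of finite-codimension $R^\Gamma$-submodules with flat quotient. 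Thus it suffices to identify these two families, naturally in $X$.

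I would do this in three steps, each of which is $\SC$-linear on underlying vector spaces and therefore compatible with $-\otimes\mathcal{O}_X$ and with pullback. First, the chosen diagram automorphism $\iota$ provides a $\SC$-algebra isomorphism $B_i\cong B_0$ together with a compatible module isomorphism $R_i=e_iBe_0\cong e_0Be_{\iota(0)}$; after $-\otimes\mathcal{O}_X$ this turns left $B_{i,X}$-module quotients of $R_i\otimes\mathcal{O}_X$ into left $B_{0,X}$-module quotients of $e_0Be_{\iota(0)}\otimes\mathcal{O}_X$, preserving $\mathcal{O}_X$-local freeness and rank. Second, the path-reversing anti-automorphism $a\mapsto a^{*}$ of $B$ restricts to an anti-automorphism of $B_0=e_0Be_0$, which is an automorphism since $B_0\cong R^\Gamma$ is commutative, and it carries $e_0Be_{\iota(0)}$ to $e_{\iota(0)}Be_0$ while converting left $B_0$-actions into right $B_0$-actions; a left $B_{0,X}$-module quotient $e_0Be_{\iota(0)}\otimes\mathcal{O}_X\twoheadrightarrow Z$ thus becomes a right $B_{0,X}$-module quotient $e_{\iota(0)}Be_0\otimes\mathcal{O}_X\twoheadrightarrow Z^{*}$, where $Z^{*}$ is the $\mathcal{O}_X$-module $Z$ equipped with the right action $z\cdot b:=b^{*}z$, still locally free of rank $n_i$. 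Third, the isomorphism $B_0\cong R^\Gamma$ from Proposition~\ref{prop_morita} and the bimodule identification $R_{\iota(0)}\cong e_{\iota(0)}Be_0$ from Corollary~\ref{cor:moritaRiBi} identify right $B_{0,X}$-module quotients of $e_{\iota(0)}Be_0\otimes\mathcal{O}_X$ with $R^\Gamma$-module quotients of $R_{\iota(0)}\otimes\mathcal{O}_X$ (left and right agreeing, as $R^\Gamma$ is commutative), which is exactly the Grothendieck Quot functor of $\Quot_{\iota(0)}^{n_{\iota(0)}}\big(\SC^2/\Gamma\big)$, with $n_{\iota(0)}=n_i$. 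Composing the three natural isomorphisms yields the desired isomorphism of schemes.

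I expect the one real subtlety to lie in the second step: the bijection between left submodules of $e_0Be_{\iota(0)}$ and right submodules of $e_{\iota(0)}Be_0$ is immediate on closed points, but to obtain an isomorphism of schemes one must check that $a\mapsto a^{*}$, being only $\SC$-linear, still induces an $\mathcal{O}_X$-linear isomorphism after $-\otimes\mathcal{O}_X$, so that the ``locally free of rank $n_i$'' property is transported, and that the construction commutes with base change along $X'\to X$; both are straightforward once set up carefully. The remaining ingredients — that $\iota$ and $a\mapsto a^{*}$ preserve colength, that finite-codimension submodules correspond to their flat quotients in the standard way, and that the final functor is literally that of the classical Quot scheme — are routine bookkeeping.
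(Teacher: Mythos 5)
Your proof is correct and follows essentially the same route as the paper: lift the bijection (constructed from the diagram automorphism $\iota$ and the path-reversing anti-automorphism of $\PiGamma$) to an isomorphism between the two represented functors, noting the anti-automorphism of $\PiGamma_0$ is an automorphism by commutativity of $\PiGamma_0 \cong R^\Gamma$ and that all three steps are $\SC$-linear, hence behave well after $-\otimes\mathcal{O}_X$. The paper's proof is terser --- it asserts the correspondence ``is functorial and preserves the colength'' and stops there --- so you have merely spelled out the same argument in greater detail.
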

\begin{proof}
The above correspondence between left $B_i$-sub\-modules of $e_iBe_0$
and right $B_0$-sub\-mo\-du\-les of $e_{\iota(0)}Be_{0}$ is functorial
and preserves the colength. By definition, the scheme
$\Quot_{\iota(0)}^{n_{\iota(0)}}\!\big(\SC^2/\Gamma\big)$ represents the functor
that sends a scheme $X$ to the set of isomorphism classes of
right $\PiGamma_{0,X}$-module quotients $R_{\iota(0),X} \to Z$ which are
locally free of rank $n_{\iota(0)}$ over $\mathcal{O}_X$. The claim follows.
\end{proof}

\begin{Remark}
The assumption of Proposition~\ref{prop:Quotcommiso} is satisfied for type $A_r$ and any $0 \leq i \leq r$, type $D_r$ and $i \in \{0,1,r-1,r\}$, $E_6$ and $i \in \{0,1,6\}$, $E_7$ and $i \in \{0,7\}$ and for $E_8$ and $i=0$. Here we have used the Bourbaki convention \cite{BourbakiLie} for enumerating the vertices.
\end{Remark}

\begin{Example}
\label{rem:crsstck}
For $i=0$, the assumption of Lemma~\ref{prop:Quotcommiso} is satisfied in all types, with $\iota(0)=0$ also. Writing $n_0=n$, we obtain isomorphisms
\[ \Quot_0^{n_0}\big(\big[\SC^2/\Gamma\big]\big)\cong \Quot_0^{n_0}\big(\SC^2/\Gamma\big)\cong \Hilb^{n}\big(\SC^2/\Gamma\big). \]
\end{Example}

\section{Fine moduli spaces of modules}
\label{subsec:finemod}

\subsection{Cornering and fine moduli spaces}

Let $\Rep(\Gamma)$
denote the representation ring of $\Gamma$. Introduce a formal symbol $\rho_{\infty}$ so that $\{\rho_i \mid i \in Q_0\}$ provides a $\mathbb{Z}$-basis for $\mathbb{Z}^{Q_0}\cong \mathbb{Z}\oplus \Rep(\Gamma)$ considered as $\mathbb{Z}$-modules.

Recall that $\PiQ=\Pi/(b^{\ast})$ is the quotient of the
preprojective algebra of the framed McKay quiver of $\Gamma$ by the two sided ideal $(b^{\ast})$, where $b^\ast$ is the arrow from $0$ to $\infty$.
Let $I \subseteq \{0, \dots , r\}$ be a non-empty subset; we work under the assumption that $I\ne \varnothing$ for the rest of the paper.
Define the idempotent $\overline{e}_I \coloneqq e_{\infty} + \sum_{i \in I} e_i \in \PiQ$.
The subalgebra
\begin{equation}\label{eqn:AI}
\PiQ_I \coloneqq \overline{e}_I \PiQ \overline{e}_I
\end{equation}
of $A$ comprises linear combinations of
classes of paths in $Q$ whose tails and heads lie in the set $\{\infty\} \cup I$; this is also an instance of \emph{cornering}~\cite[Remark~3.1]{craw2018multigraded}.

Let
$n_I \coloneqq {(n_i)_{i\in I}}= \sum_{i \in I} n_i \rho_i \in \mathbb{N}^{I}$.
Then $(1,n_I)\coloneqq\rho_{\infty} +n_I \in \mathbb{N} \oplus \mathbb{N}^{I}$
is a dimension vector for $\PiQ_I$-modules, and we consider the stability condition $\eta_{I}\colon \mathbb{Z} \oplus \mathbb{Z}^{I} \to \mathbb{Q}$
given by
\begin{equation}
 \label{eqn:etaI}
\eta_I (\rho_i) =
\begin{cases}
\displaystyle -\sum_{j \in I} n_j & \textrm{for } i= \infty, \\
1 & \textrm{if }i \in I.
\end{cases}
\end{equation}

It follows directly from the definition that an $\PiQ_I$-module $N$ of dimension vector $(1,n_I)$ is $\eta_I$-stable if and only if
there exists a surjective $\PiQ_I$-module homomorphism $\PiQ_I e_{\infty} \to N$.
The quiver moduli space construction of King~\cite[Proposition~5.3]{king1994moduli} for finite dimensional algebras can be adapted to any algebra presented as the quotient of a finite, connected quiver by an ideal of relations. We established in \cite[Proposition~3.3]{craw2019punctual} that the algebra $\PiQ_I$ has this property.
The vector~$(1,n_I)$ is indivisible and $\eta_I$ is a generic stability condition for $\PiQ_I$-modules of dimension vector~$(1,n_I)$, so there is a~fine moduli space $\mathcal{M}_{\PiQ_I}(1,n_I)$
of $\eta_I$-stable $\PiQ_I$-modules of dimension
vector~$(1,n_I)$. Let $T_I \coloneqq\bigoplus_{i \in \{\infty \}\cup I } T_i$ denote the tautological vector bundle on~$\mathcal{M}_{\PiQ_I}(1,n_I)$, where~$T_{\infty}$ is the trivial bundle and $T_i$ has rank~$n_i$ for $i \in I$.
After multiplying $\eta_I$ by a positive integer $m\geq 1$ if necessary, we may assume that the polarising ample line bundle $\mathcal{L}_I \coloneqq \bigotimes_{i \in I} \mathrm{det}(T_i)^{m}$ on $\mathcal{M}_{\PiQ_I}(1,n_I)$ given by the GIT construction is very ample.

\subsection{The Quot scheme as a fine moduli space of modules}
We now establish the link between the fine moduli space $\mathcal{M}_{\PiQ_I}(1,n_I)$ and the orbifold Quot scheme $\QuotInI\big(\big[\SC^2/\Gamma\big]\big)$.

For any non-empty subset $I \subseteq \{0,\dots,r\}$, consider the algebras $\PiQ_I$ and $\PiGamma_I$ from~\eqref{eqn:AI} and \eqref{eqn:BI} respectively.
 In \cite[Proposition~3.3]{craw2019punctual}, we used the isomorphism $\PiGamma_I\cong \End_{R_0}\big(\bigoplus_{i\in I} R_i\big)$ to introduce quivers $Q_I$ and $Q_I^*$ and a commutative diagram of $\C$-algebra homomorphisms
 \begin{equation*}
\begin{tikzcd}
 \C Q_I \ar[d,"\alpha_I"]\ar[r] & \C Q_I^* \ar[d,"\beta_I"] \\
 \PiGamma_I \ar[r] & \PiQ_I,
 \end{tikzcd}
 \end{equation*}
 where the vertical maps are surjective and the horizontal maps are injective. In particular, we obtain quiver presentations $\PiGamma_I\cong \C Q_I/\ker(\alpha_I)$ and $\PiQ_I\cong \C Q_I^*/\ker(\beta_I)$. The vertex set of $Q^*_I$ is $\{\infty\}\cup I$, while the
edge set
comprises three kinds of arrows: loops at each vertex $i\in I$; arrows between pairs of
distinct vertices in $I$; and
arrows
from vertex $\infty$ to vertices in $I$.
Note that $Q_I$ is the complete
subquiver of $Q_I^*$ on the vertex set $I$.

\begin{Lemma}\label{lem_general_I}
Equip the vector space $\PiGamma_I \oplus R_I \oplus \SC e_\infty$ with the following multiplication:
\[ (b_1,r_1,c_1e_\infty) \cdot (b_2,r_2,c_2e_\infty) \coloneqq (b_1b_2,b_1r_2+r_1c_2,c_1c_2e_\infty). \]
Then there is an isomorphism
\begin{equation*}
\PiQ_I \cong \PiGamma_I \oplus R_I \oplus \SC e_\infty
\end{equation*}
of $\SC$-algebras, where $R_I=e_I \PiGamma e_0$ is also a left $\PiGamma_I$-module.
\end{Lemma}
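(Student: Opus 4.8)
The plan is to exploit the decomposition \eqref{eq:PiQdecomp}, namely $\PiQ \cong \PiGamma \oplus \PiGamma b \oplus \SC e_\infty$ of complex vector spaces, and its compatibility with the idempotent $\overline e_I = e_\infty + e_I$. First I would apply $\overline e_I$ on both sides of $\PiQ$ to compute $\PiQ_I = \overline e_I \PiQ \overline e_I$ directly. Writing $\overline e_I = e_\infty + e_I$ and using that $e_\infty$, $e_I$ are orthogonal idempotents summing to $\overline e_I$, the product $\overline e_I \PiQ \overline e_I$ breaks into the four pieces $e_I \PiQ e_I$, $e_I \PiQ e_\infty$, $e_\infty \PiQ e_I$ and $e_\infty \PiQ e_\infty$. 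Using \eqref{eq:PiQdecomp} together with the fact that $b$ is the arrow from $\infty$ to $0$ (so $b = e_0 b e_\infty$ and hence $e_\infty \PiGamma = \PiGamma e_\infty = 0$ inside these expressions, while $e_\infty b = 0$ and $b e_\infty = b$), one identifies: $e_I \PiQ e_I \cong e_I \PiGamma e_I = \PiGamma_I$ (the path-in-$Q_\Gamma$ part, since $\PiGamma b$ and $\SC e_\infty$ contribute nothing between vertices of $I$); $e_I \PiQ e_\infty \cong e_I \PiGamma b e_\infty \cong e_I \PiGamma e_0 = R_I$, using $R_i \cong e_i B e_0$ from Corollary~\ref{cor:moritaRiBi} and that right-multiplication by $b$ is a bijection $e_I\PiGamma e_0 \to e_I \PiGamma b$; $e_\infty \PiQ e_I = 0$, since any path from a vertex of $I$ to $\infty$ would have to use $b^\ast$, which is zero in $\PiQ$; and $e_\infty \PiQ e_\infty = \SC e_\infty$. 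This already gives the claimed vector-space isomorphism $\PiQ_I \cong \PiGamma_I \oplus R_I \oplus \SC e_\infty$.

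Next I would check that this vector-space isomorphism is in fact an algebra isomorphism by matching multiplication. Transport the product of $\PiQ_I$ through the identification above: a general element corresponds to a triple $(b_1, r_1 b, c_1 e_\infty)$ with $b_1 \in \PiGamma_I$, $r_1 \in R_I = e_I\PiGamma e_0$, $c_1 \in \SC$ (here I am writing $r_1 b$ for the image of $r_1$ under $e_I\PiGamma e_0 \xrightarrow{\cdot b} e_I\PiGamma b$). Multiplying $(b_1 + r_1 b + c_1 e_\infty)(b_2 + r_2 b + c_2 e_\infty)$ inside $\PiQ_I$ and expanding, the nine terms collapse using the relations just recorded: $b_1 b_2 \in \PiGamma_I$; $b_1 (r_2 b) = (b_1 r_2) b$ contributes to the $R_I$-slot; $(r_1 b) b_2 = 0$ because $b b_2 = 0$ (as $e_\infty b_2 = 0$ — no path out of $\infty$ in $Q_\Gamma$); $(r_1 b)(c_2 e_\infty) = c_2 (r_1 b)$ contributes to the $R_I$-slot; $(c_1 e_\infty) b_2 = 0$; $(c_1 e_\infty)(r_2 b) = 0$ since $e_\infty r_2 = 0$; $(c_1 e_\infty)(c_2 e_\infty) = c_1 c_2 e_\infty$; and the cross terms $b_1 c_2 e_\infty$, $c_1 e_\infty b_2$ vanish by orthogonality. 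The surviving $R_I$-slot is $b_1 r_2 + r_1 c_2$, exactly the rule $b_1 r_2 + r_1 c_2$ in the statement, and the $\SC e_\infty$-slot is $c_1 c_2$. So the multiplication matches, and the unit $\overline e_I$ maps to $(e_I, 0, e_\infty)$, which is the identity for the displayed product.

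The step I expect to require the most care is the precise bookkeeping of which mixed products vanish — in particular verifying $e_\infty \PiQ e_I = 0$ and $e_I\PiGamma e_\infty = 0$, i.e.\ that the only surviving arrow incident to $\infty$ in $\PiQ_I$ is (the class of) $b$, going \emph{into} $\infty$'s complement. This is where the asymmetry caused by killing $b^\ast$ in passing from $\Pi$ to $\PiQ = \PiGamma$ — wait, from $\Pi$ to $\PiQ$ — enters, and it is exactly what makes the product on $\PiGamma_I \oplus R_I \oplus \SC e_\infty$ a trivial (square-zero on the $R_I$-part) extension rather than something more complicated; concretely $R_I \cdot R_I = 0$ in this algebra because $b \cdot (\text{anything}) = 0$. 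Everything else is the routine expansion sketched above, and the identification $e_I\PiGamma e_0 \cong R_I$ is Corollary~\ref{cor:moritaRiBi}. I would also remark that, as a consistency check, the decomposition is compatible with \eqref{eq:PiQdecomp}: summing over all $I = \{0,\dots,r\}$ recovers $\PiQ \cong \PiGamma \oplus \PiGamma b \oplus \SC e_\infty$ with $R_{\{0,\dots,r\}} = \bigoplus_i R_i \cong e_{\{0,\dots,r\}}\PiGamma e_0$ mapping onto $\PiGamma b = \PiGamma e_0 b$ under $\cdot b$.
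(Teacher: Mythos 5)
Your approach is correct and essentially the same as the paper's: both start from the decomposition \eqref{eq:PiQdecomp}, corner by $\overline e_I$, identify $e_I\PiGamma b$ with $R_I$ via right-multiplication by $b$, and verify the algebra structure by expanding products. The one point you assert without argument — that $e_I\PiGamma e_0 \xrightarrow{\,\cdot b\,} e_I\PiGamma b$ is injective — is the only nontrivial check, and the paper supplies it by noting that every relation of $\PiQ$ involving $b$ factors as $b$ times a relation of $\PiGamma$.
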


\begin{proof}
It follows from \eqref{eq:PiQdecomp} that there is an isomorphism of vector spaces
\begin{equation*} \PiQ_I
\cong (e_I+e_\infty) (\PiGamma \oplus \PiGamma b \oplus \SC e_\infty) (e_I+e_\infty) \cong \PiGamma_I \oplus e_I \PiGamma b \oplus \SC e_\infty.\end{equation*}
For the middle summand, consider the map of left $\PiGamma_I$-modules \begin{equation}
\label{eq:RIbij}
R_I = e_I\PiGamma e_0 \longrightarrow e_I\PiGamma b\end{equation} defined on representing paths in the framed McKay quiver $Q$ by composing with the arrow $b$ on the right. This map is by definition surjective. But it is also injective, as every relation in $\PiQ$ involving the arrow $b$ can be factored into the product of $b$ and a relation in $\PiGamma$. This establishes the isomorphism
\begin{equation*}
\PiQ_I \cong \PiGamma_I \oplus R_I \oplus \SC e_\infty
\end{equation*}
of vector spaces. To enhance this to an isomorphism of $\mathbb{C}$-algebras,
note that the algebra structure on $\PiQ_I \cong \PiGamma_I \oplus e_I \PiGamma b \oplus \SC e_\infty$ is given by
\[ (b_1,r_1b,c_1e_\infty) \cdot (b_2,r_2b,c_2e_\infty) = \big(b_1b_2,b_1r_2b+r_1bc_2e_\infty,c_1c_2e_\infty\big).\]
Since $c_2$ is a scalar, it commutes with $b=be_\infty$, giving $b_1r_2b+r_1bc_2e_\infty = (b_1r_2+r_1c_2)b$ in the middle term above. The isomorphism \eqref{eq:RIbij} allows us to drop the $b$ from the right-hand side, leaving the second statement as required.
\end{proof}

\begin{Proposition}
\label{prop:omegaiso}
For any non-empty subset $I\subseteq\{0,\dots,r\}$ and $n_I \in \mathbb{N}^{I}$, there is an isomorphism of schemes
\[\omega_{n_I} \colon \ \QuotInI\big(\big[\SC^2/\Gamma\big]\big) \longrightarrow \mathcal{M}_{\PiQ_I}(1,n_I).\]
In particular, $\QuotInI\big(\big[\SC^2/\Gamma\big]\big)$ is non-empty if and only if $\mathcal{M}_{\PiQ_I}(1,n_I)$ is non-empty.
\end{Proposition}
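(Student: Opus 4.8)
The strategy is to identify the functor of points of $\QuotInI\big(\big[\SC^2/\Gamma\big]\big)$, which by construction is $\mathcal{Q}^{n_I}_{\PiGamma_I}(R_I)$, with the moduli functor represented by $\mathcal{M}_{\PiQ_I}(1,n_I)$, and then invoke Yoneda. The bridge is Lemma~\ref{lem_general_I}, which presents $\PiQ_I$ as the one-point extension $\PiGamma_I \oplus R_I \oplus \SC e_\infty$ with the triangular multiplication recorded there. Reading off the module category of such an algebra, a left $\PiQ_I$-module $N$ with $\dim_\SC e_\infty N = 1$ is equivalent to specifying a left $\PiGamma_I$-module $Z = e_I N$ together with a $\PiGamma_I$-module homomorphism $R_I \otimes_\SC e_\infty N \to Z$, namely the restriction to $R_I$ of the $\PiQ_I$-action; after fixing a basis vector of the line $e_\infty N$ this becomes a homomorphism $\phi\colon R_I \to Z$, and the condition $\dim N = (1, n_I)$ becomes $\dim_\SC e_i Z = n_i$ for each $i \in I$. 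This correspondence is compatible with base change, so over an arbitrary scheme $X$ it identifies flat families of $\PiQ_{I,X}$-modules whose $e_\infty$-component is the trivial line bundle with pairs $(Z, \phi_Z\colon R_I \otimes \mathcal{O}_X \to Z)$ in which $Z$ is a $\PiGamma_{I,X}$-module, flat over $X$, with $e_i Z$ locally free of rank $n_i$ exactly when the family has dimension vector $(1,n_I)$ fibrewise.

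Next one matches the stability notion with the surjectivity built into the Quot functor. As observed in Section~\ref{subsec:finemod}, a $\PiQ_I$-module $N$ of dimension vector $(1,n_I)$ is $\eta_I$-stable if and only if it is generated by $e_\infty N$, that is, admits a surjection $\PiQ_I e_\infty \to N$. Under the dictionary above this holds precisely when $Z$ is generated as a $\PiGamma_I$-module by $\im\phi$, equivalently when $\phi\colon R_I \to Z$ is surjective, a condition that again passes to families. On closed points we therefore get a bijection between isomorphism classes of $\eta_I$-stable $\PiQ_I$-modules of dimension vector $(1, n_I)$ and surjective $\PiGamma_I$-homomorphisms $R_I \to Z$ with $\dim_\SC e_i Z = n_i$: in one direction stability furnishes the surjection $\phi$, in the other the triangular product builds $N = Z \oplus \SC e_\infty$ from $(Z,\phi)$, and the scalar ambiguity in the choice of generator of $e_\infty N$ is absorbed by the isomorphism classes on the Quot side. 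Equivalently, one may first identify $\mathcal{M}_{\PiQ_I}(1,n_I)$ with the noncommutative Quot scheme $\Quot^{(1,n_I)}_{\PiQ_I}(\PiQ_I e_\infty)$ of Section~\ref{sect:generalquot}, and then apply Lemma~\ref{lem_general_I} to strip away the trivial summand $\SC e_\infty$ and reduce $\PiQ_I e_\infty$ over $\PiQ_I$ to $R_I$ over $\PiGamma_I$.

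The real content — and the step I expect to be the main obstacle — is upgrading this to an isomorphism of functors $\mathrm{Sch}^{\mathrm{op}} \to \mathrm{Sets}$, which reduces to checking that $\mathcal{M}_{\PiQ_I}(1,n_I)$ represents the functor of flat families of $\eta_I$-stable $\PiQ_I$-modules of dimension vector $(1,n_I)$ with trivial $\infty$-component. The key point is that the tautological bundle $T_\infty$ is trivial (Section~\ref{subsec:finemod}), so the universal family is already normalized at the vertex $\infty$; a morphism $X \to \mathcal{M}_{\PiQ_I}(1,n_I)$ then pulls it back to such a rigidified family, and fineness of the moduli space — valid because $(1,n_I)$ is indivisible, $\eta_I$-semistability coincides with $\eta_I$-stability, and the only fibrewise automorphisms are global scalars, which the trivialization of the $\infty$-component rigidifies away — ensures conversely that every rigidified flat family is pulled back from a unique morphism. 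Combining this with the previous two paragraphs produces a natural isomorphism between $\mathcal{Q}^{n_I}_{\PiGamma_I}(R_I)$ and the functor of points of $\mathcal{M}_{\PiQ_I}(1,n_I)$; as both are schemes of finite type over $\SC$ (Proposition~\ref{prop:Qnrep} and King's construction), Yoneda yields the isomorphism $\omega_{n_I}$. The last assertion of the proposition is then immediate, since an isomorphism of schemes is a bijection on $\SC$-points, so one side is empty if and only if the other is. The delicate bookkeeping is thus entirely in identifying which moduli functor King's GIT quotient represents and matching its $\infty$-rigidification with the rigid quotient datum underlying the Quot functor.
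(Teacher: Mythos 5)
Your proposal is correct and follows essentially the same route as the paper: both hinge on the decomposition $\PiQ_I \cong \PiGamma_I \oplus R_I \oplus \SC e_\infty$ from Lemma~\ref{lem_general_I} to translate between $\eta_I$-stable $\PiQ_I$-modules of dimension $(1,n_I)$ and surjective $\PiGamma_I$-module maps $R_I \to Z$, then invoke the universal properties of the two fine moduli spaces. The paper packages this as a pair of explicitly constructed morphisms (push the tautological quotient $\mathcal{T}$ on the Quot scheme to a family of $\PiQ_I$-modules on $\mathcal{O}\oplus\mathcal{T}$, and conversely strip the $\infty$-summand from $T_I$), whereas you phrase it as a natural isomorphism of the two representable functors followed by Yoneda; these are the same argument in different notation, and your extra care about the $\infty$-rigidification simply makes explicit what the paper's choice of $T_\infty$ trivial is doing.
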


\begin{proof}
Denote by $\mathcal{T}$ the tautological bundle on $\QuotInI\big(\big[\SC^2/\Gamma\big]\big)$, a family of quotient $\PiGamma_I$-modules of $R_I$ of rank $\sum_{i \in I} n_i$. Moreover, let $\mathcal{O}$ be the trivial bundle on $\QuotInI\big(\big[\SC^2/\Gamma\big]\big)$.

We give $\mathcal{O}\oplus \mathcal{T}$ the structure of a family of $\PiQ_I$-modules using the decomposition from Lemma~\ref{lem_general_I}.
Indeed, the $\PiGamma_I$-module structure on $\mathcal{T}$ extends to a $\PiGamma_I$-module structure on $\mathcal{O}\oplus \mathcal{T}$ by acting by $0$ on the first summand. The summand $\SC e_\infty$ acts by scaling on $\mathcal{O}$ and trivially on $\mathcal{T}$. Finally, as the bundle $\mathcal{T}$ is a quotient of the trivial family $R_I$, any element $r$ of the middle summand $R_I \subset A_I$ determines an $\mathcal{O}$-module homomorphism $\phi_r \in \Hom(\mathcal{O},\mathcal{T})$ via the correspondence $1\mapsto [r]$. As $R_I$ is a left $\PiGamma_I$-module, the $\SC$-linear correspondence $r \mapsto \phi_r$ is left $\PiGamma_I$-linear. The action of two elements $(b_1,r_1,c_1 e_\infty), (b_2,r_2,c_2 e_\infty) \in \PiQ_I$ on $\mathcal{O}\oplus \mathcal{T}$ therefore composes as
\[ \begin{pmatrix}c_1 & 0 \\ \phi_{r_1} & b_1 \end{pmatrix} \circ \begin{pmatrix}c_2 & 0 \\ \phi_{r_2} & b_2 \end{pmatrix} = \begin{pmatrix}c_1c_2 & 0 \\ \phi_{r_1}c_2+b_1\phi_{r_2} & b_1b_2 \end{pmatrix}=\begin{pmatrix}c_1c_2 & 0 \\ \phi_{r_1c_2+b_1r_2} & b_1b_2 \end{pmatrix}.\]
Lemma~\ref{lem_general_I}
shows that
the bundle $\mathcal{O}\oplus \mathcal{T}$ on $\QuotInI\big(\big[\SC^2/\Gamma\big]\big)$
is a flat family of $\PiQ_I$-modules
of dimension vector $(1,n_I)$. Over any closed point, the resulting
$\PiQ_I$-module is generated at~$\infty$, and so it is $\eta_I$-stable.
The universal property of the fine moduli space $\mathcal{M}_{\PiQ_I}(1,n_I)$
induces the morphism of schemes~$\omega_{n_I}$.

Conversely, applying Lemma~\ref{lem_general_I}
in the opposite direction gives that to any $\eta_I$-stable $\PiQ_I$-module of dimension vector~$(1,n_I)$, there corresponds a cyclic $\PiGamma_I$-module of dimension vector~$n_I$. By applying this to the family given by the tautological bundle $T_I$ on $\mathcal{M}_{\PiQ_I}(1,n_I)$, we see that the universal property of the fine moduli space $\QuotInI\big(\big[\SC^2/\Gamma\big]\big)$
induces the inverse of the mor\-phism~$\omega_{n_I}$.
\end{proof}

\section{Quiver varieties for the framed McKay quiver}\label{sec:quivers}

\subsection{Variation of GIT quotient for quiver varieties}
\label{subsec:quivervars}

Let $v\coloneqq(v_i)_{0 \leq i \leq r} \in \mathbb{N}^{r+1}$ be a dimension vector for the McKay quiver $Q_\Gamma$.
Recall that we identify $\mathbb{Z}^{Q_0}\cong \mathbb{Z}\rho_{\infty}\oplus \Rep(\Gamma)$, so we obtain the
dimension vector \[(1,v) \coloneqq\rho_{\infty}+\sum_{i=0}^r v_i\rho_i \in \mathbb{N}^{Q_0}
\] for the framed McKay quiver $Q$. Write
\[ \Theta_{v}=\big\{\theta\colon \mathbb{Z}^{Q_0} \to \mathbb{Q}\; : \; \theta\big((1,v)\big)=0 \big\} \]
for the space of stability conditions for the
the framed quiver. Following Nakajima~\cite{nakajima1994instantons, Nakajima98}, for every $\theta \in \Theta_{v}$, the quiver variety $\mathfrak{M}_{\theta}(1,v)$ is the coarse moduli space of S-equivalence classes of $\theta$-semistable $\Pi$-modules of dimension vector $(1,v)$. The GIT construction of $\mathfrak{M}_{\theta}(1,v)$ is summarised for example in \cite[Section 2]{craw2019punctual}; see also \cite[Section 2]{nakajima2009quiver}. Note in particular that we give $\mathfrak{M}_{\theta}(1,v)$ the reduced scheme structure.

The set of stability conditions $\Theta_v$ admits a preorder $\geq$, and this determines a wall-and-chamber structure, where
$\theta,\theta' \in \Theta_v$ lie in the relative interior of the same cone if and only if both $\theta \geq \theta'$ and $\theta' \geq\theta$ hold in this preorder, in which case $\mathfrak{M}_{\theta}(1,v)\cong\mathfrak{M}_{\theta'}(1,v)$. The interiors of the top-dimensional cones in $\Theta_v$ are chambers, while the codimension-one faces of the closure of each chamber are walls. We say that $\theta \in \Theta_v$ is generic with respect to $v$, if it lies in some GIT chamber. The vector $(1,v)$ is indivisible, so again, \cite[Proposition~5.3]{king1994moduli} implies that for generic $\theta \in \Theta_v$ the quiver variety $\mathfrak{M}_{\theta}(1,v)$ is the fine moduli space of isomorphism classes of $\theta$-stable $\Pi$-modules of dimension vector $(1,v)$. In this case, the universal family on $\mathfrak{M}_{\theta}(1,v)$ is a tautological locally-free sheaf $\mathcal{R} \coloneqq \oplus_{i \in Q_0}\mathcal{R}_i$ together with a $\SC$-algebra homomorphism $\varphi \colon \Pi \to \mathrm{End}(\mathcal{R})$, where $R_\infty$ is the trivial bundle on $\mathfrak{M}_{\theta}(1,v)$ and where $\mathrm{rank}(\mathcal{R}_i) = v_j$ for $i \geq 0$.

\begin{Lemma}\label{lem:quiver}\quad
\begin{enumerate}\itemsep=0pt
\item[$1.$] For all $\theta \in \Theta_v$, the scheme $\mathfrak{M}_{\theta}(1,v)$ is irreducible and normal, with symplectic singularities.
\item[$2.$] Let $ \theta \geq \theta'$. The morphism $\pi\colon \mathfrak{M}_{\theta}(1,v) \to \mathfrak{M}_{\theta'}(1,v)$ obtained by variation of GIT quotient is a projective morphism of varieties over the affine quotient $\mathfrak{M}_{0}(1,v)$.
\end{enumerate}
\end{Lemma}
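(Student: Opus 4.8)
The plan is to deduce both parts from standard results on Nakajima quiver varieties, since here the relevant hypotheses — that $\Pi$ is the preprojective algebra of the framed McKay quiver (an affine ADE graph) and that the dimension vector has a single framing node with value $1$ — are exactly those under which these results are usually stated.

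First I would address part $1$. The key point is that the affine quotient $\mathfrak{M}_0(1,v)$ is, by construction, $\operatorname{Spec}$ of the invariant ring of the moment-map fibre $\mu^{-1}(0)$ under the group $\prod_{i\in Q_0}\GL(v_i)$ acting on the doubled quiver representation space, and for a quiver whose underlying graph has no edge-loops (true for the framed affine ADE graph) the general theory — going back to Nakajima and worked out in the generality we need by Bellamy–Schedler and Kaledin — shows that the fibres of $\mathfrak{M}_\theta(1,v)\to\mathfrak{M}_0(1,v)$ are (possibly empty) and that $\mathfrak{M}_\theta(1,v)$ has symplectic singularities, hence is normal; irreducibility for a dimension vector with framing $(1,v)$ then follows because $\mathfrak{M}_\theta(1,v)$ is obtained as a GIT quotient of the irreducible variety $\mu^{-1}(0)$ — irreducibility of $\mu^{-1}(0)$ is itself a theorem of Crawley-Boevey for such graphs — and because $\theta$-semistable points form an open (hence irreducible, when non-empty) subset. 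Since we take $\mathfrak{M}_\theta(1,v)$ with its reduced structure, this gives an irreducible normal variety with symplectic singularities. I would cite \cite{nakajima1994instantons, Nakajima98} together with the cited summary in \cite[Section~2]{craw2019punctual} for these facts, noting that nothing here depends on $v$ being a multiple of $\delta$.

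For part $2$, the map $\pi$ arises from variation of GIT: when $\theta\ge\theta'$ every $\theta$-semistable module is $\theta'$-semistable, so the open subset of semistable points shrinks and one gets a morphism of GIT quotients. Both quotients are constructed via King's machinery as $\operatorname{Proj}$ of a ring of semi-invariants over the affine quotient $\mathfrak{M}_0(1,v)=\operatorname{Spec}R^{G}$ (the case $\theta=0$), and the structural comparison of GIT quotients for varying linearisation of a fixed affine variety shows $\pi$ is projective; moreover it is a morphism of schemes over $\mathfrak{M}_0(1,v)$ because both moduli spaces map to it compatibly (the $0$-semistable locus contains both $\theta$- and $\theta'$-semistable loci). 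Combined with part $1$, both source and target are varieties, so $\pi$ is a projective morphism of varieties over $\mathfrak{M}_0(1,v)$ as claimed.

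The main obstacle is marshalling the correct references for part $1$: the statements "irreducible, normal, symplectic singularities" for quiver varieties at arbitrary (not necessarily generic) $\theta$ and arbitrary $v$ with unit framing are spread across the literature, and one must be slightly careful that the version of symplectic singularities quoted does not secretly assume genericity of $\theta$ or a specific $v$. Everything else — the existence and projectivity of $\pi$ over the affine quotient — is a formal consequence of the GIT construction and requires no real work.
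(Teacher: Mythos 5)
Your overall strategy is the same as the paper's: part (1) is deduced from the general theory of quiver varieties as developed by Bellamy--Schedler (the paper cites \cite[Theorem~1.2, Proposition~3.21]{bellamy2016symplectic} directly), and part (2) is the standard VGIT argument. Your treatment of normality and symplectic singularities, and of part (2), is fine.

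However, your proposed mechanism for irreducibility has a gap. You claim that $\mathfrak{M}_\theta(1,v)$ is irreducible because it is a GIT quotient of the irreducible variety $\mu^{-1}(0)$, and attribute irreducibility of $\mu^{-1}(0)$ to Crawley-Boevey ``for such graphs.'' This is not correct in the generality required. Crawley-Boevey's irreducibility theorem for moment-map fibres requires hypotheses on the dimension vector (roughly that it satisfies an inequality $p(\alpha)>\sum p(\beta_t)$ for all nontrivial decompositions into positive roots), and these can fail here: for instance, with $v$ supported only at the vertex $0$ with $v_0=1$, the fibre $\mu^{-1}(0)$ contains the locus $\{bb^*=0\}\subset\SC^2$, which is visibly reducible, yet the associated quiver variety is a single point and hence irreducible. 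The correct statement --- that the quiver variety itself is irreducible when non-empty, for arbitrary $\theta$ and arbitrary $v$ over a graph without edge-loops --- is precisely \cite[Proposition~3.21]{bellamy2016symplectic}, and it is proved there by a different route (via the stratification of the quiver variety by representation type and the canonical decomposition, not by deducing it from irreducibility of the moment-map fibre). You should cite that proposition directly rather than routing through $\mu^{-1}(0)$.
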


\begin{proof}
Part (1) is \cite[Theorem 1.2, Proposition 3.21]{bellamy2016symplectic} while part (2) is well-known.
\end{proof}

\subsection{Wall-and-chamber structure}
\label{subsec:wallandcham}

Define $\delta\coloneqq \sum_{0\leq i\leq r} \dim(\rho_i)\rho_i\in \Rep(\Gamma)$. For the dimension vector $v\coloneqq n\delta$, the GIT wall-and-chamber structure on $\Theta_{n\delta}$ is computed explicitly in \cite[Theorem 4.6]{bellamy2020birational}. More generally, for arbitrary $v \in \mathbb{N}^{r+1}$,
it is possible to compute the wall-and-chamber structure on $\Theta_v$ by applying recent work of
Dehority~\cite[Proposition 4.8]{dehority2020affinizations}. We do not need the wall-and-chamber structure here, but we do use the following distinguished region of the space of stability conditions:
\[ C^{+}_v\coloneqq \{\theta \in \Theta_v \mid \theta(\rho_i) > 0 \textrm{ for } 0 \leq i \leq r \}. \]
This open cone need not be a GIT chamber, though it is always contained in one (and it is a chamber in the special case $v=n\delta$, see \cite[Example~2.1]{bellamy2020birational} or \cite[Section~2.8]{nakajima2009quiver}).
The next statement, which appeared originally in \cite[Section~2]{nakajima2002geometric}, illustrates the importance of $C^{+}_v$.
\begin{Proposition}
\label{prop:hilbiso}
For $v \in \mathbb{N}^{r+1}$ and $\theta \in C^{+}_v$, there is an isomorphism
\[ \Hilb^{v}\big(\big[\SC^2/\Gamma\big]\big) \cong \mathfrak{M}_{\theta}(1,v). \]
\end{Proposition}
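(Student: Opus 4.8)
The plan is to identify both sides with fine moduli spaces of the same quiver modules, so that the proposition reduces to a comparison of stability conditions together with one classical vanishing statement. I would first invoke results already at hand: taking $I=\{0,\dots,r\}$ (so that $\overline{e}_I=1$ and $\PiQ_I=\PiQ$), Lemma~\ref{lem:HilbMor} gives $\Hilb^v\big(\big[\SC^2/\Gamma\big]\big)\cong\Quot_I^{v}\big(\big[\SC^2/\Gamma\big]\big)$, and Proposition~\ref{prop:omegaiso} then identifies the latter with the fine moduli space $\mathcal{M}_{\PiQ}(1,v)$ of $\eta_I$-stable $\PiQ$-modules of dimension vector $(1,v)$, where $\eta_I$ is the stability condition of \eqref{eqn:etaI}; recall that $\eta_I$-stability of such a module just means that it is generated by $e_\infty$, and note that $\eta_I$ itself lies in $C^+_v$. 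So it suffices to show $\mathcal{M}_{\PiQ}(1,v)\cong\mathfrak{M}_\theta(1,v)$ for every $\theta\in C^+_v$.

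Next I would run the stability comparison for $\Pi$-modules. For $\theta\in C^+_v$, a $\Pi$-module $M$ of dimension vector $(1,v)$ is $\theta$-semistable if and only if it is $\theta$-stable if and only if $M=\Pi\,e_\infty M$: since $\theta\big((1,v)\big)=0$ and $\theta(\rho_i)>0$ for $0\le i\le r$, a proper nonzero submodule $N$ satisfies $\theta(N)=\sum_{0\le i\le r}(\dim e_iN)\,\theta(\rho_i)>0$ when $e_\infty N=0$, and $\theta(N)=\sum_{0\le i\le r}(\dim e_iN-v_i)\,\theta(\rho_i)<0$ when $e_\infty N\ne 0$ (as then $N\ne M$, so some $\dim e_iN<v_i$). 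Hence the $\theta$-semistable locus inside the representation variety of $\Pi$ is the same open subvariety — the $e_\infty$-generated representations — for every $\theta\in C^+_v$, and there semistability coincides with stability; so for every $\theta\in C^+_v$ the variety $\mathfrak{M}_\theta(1,v)$ is the fine moduli space of $\theta$-stable $\Pi$-modules of dimension vector $(1,v)$, and it is the same scheme for all such $\theta$. Taking $\theta=\eta_I$, it remains to match $e_\infty$-generated $\Pi$-modules of dimension vector $(1,v)$ with $e_\infty$-generated $\PiQ$-modules of dimension vector $(1,v)$.

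Since $\PiQ=\Pi/(b^\ast)$, a $\PiQ$-module is exactly a $\Pi$-module on which $b^\ast$ acts as zero, so the one remaining point is the converse: an $e_\infty$-generated $\Pi$-module $M$ of dimension vector $(1,v)$ must have $b^\ast$ acting as zero. This is the crux, and the only place where the preprojective relations enter essentially; it is the $\Gamma$-equivariant form of Nakajima's classical observation that a framed preprojective (ADHM) datum which satisfies the moment-map equation and is generated by its framing has vanishing co-framing map. I would prove it directly: the relation at vertex $\infty$ forces $b^\ast$ to annihilate $\im(b)\subseteq e_0M$, and then, using the relations at the vertices $0\le i\le r$ to rewrite the relevant commutators, an induction on path length (carried out with a trace argument) shows that $b^\ast$ annihilates $p\cdot e_\infty M$ for every path $p$ starting at $\infty$; since $M=\Pi\,e_\infty M$ this gives $b^\ast=0$ (the hypothesis that $M$ is $e_\infty$-generated is essential here). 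Granting this, an $e_\infty$-generated $\Pi$-module of dimension vector $(1,v)$ is literally the same datum as an $e_\infty$-generated $\PiQ$-module of that dimension vector, so the two moduli functors coincide; this gives $\mathcal{M}_{\PiQ}(1,v)\cong\mathfrak{M}_{\eta_I}(1,v)=\mathfrak{M}_\theta(1,v)$, which together with the first paragraph proves the proposition. The main obstacle is exactly this vanishing of $b^\ast$; everything else is formal bookkeeping with the cited results and King's GIT construction.
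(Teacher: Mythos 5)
Your proof is correct, and it takes a genuinely different route from the paper. The paper's proof of Proposition~\ref{prop:hilbiso} is essentially a citation: it invokes Kuznetsov's Theorem~4.6 from ``Quiver varieties and Hilbert schemes'' for $v=n\delta$ and observes that the same argument (in particular his Corollary~3.5) goes through for arbitrary $v$. You instead derive the statement from the paper's own internal machinery: setting $I=\{0,\dots,r\}$ in Lemma~\ref{lem:HilbMor} and Proposition~\ref{prop:omegaiso} identifies $\Hilb^v\big(\big[\SC^2/\Gamma\big]\big)$ with the fine moduli space $\mathcal{M}_{\PiQ}(1,v)$ of $e_\infty$-generated $\PiQ$-modules, and then the task is reduced to matching $\eta_I$-stable $\PiQ$-modules against $\theta$-stable $\Pi$-modules for $\theta\in C^+_v$. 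There is no circularity here: Lemma~\ref{lem:HilbMor} and Proposition~\ref{prop:omegaiso} live in Sections~3--4 and their proofs (Morita equivalence, the vector-space decomposition~\eqref{eq:PiQdecomp}, Lemma~\ref{lem_general_I}) do not depend on Proposition~\ref{prop:hilbiso}. Your stability calculation --- that for $\theta\in C^+_v$ and dimension vector $(1,v)$, $\theta$-semistability, $\theta$-stability and generation at $\infty$ all coincide --- is correct and standard, and it also explains why $C^+_v$ sits inside a single GIT chamber. What both routes share is the crucial ingredient: the vanishing of $b^*$ on an $e_\infty$-generated $\Pi$-module (Kuznetsov's Corollary~3.5, or equivalently the $\Gamma$-equivariant ADHM vanishing). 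You only sketch this, and the ``trace argument'' needs to be run somewhat carefully --- the naive cyclic-trace manipulation alone gives linear relations among the scalars $b^*pb$ rather than their vanishing, and one must combine it with the induction on path length using the preprojective relations to rewrite commuted terms; the generation hypothesis then enters at the very end when passing from $b^*pb=0$ for all paths $p$ to $b^*=0$ on all of $e_0M$. Modulo writing that lemma out, your proof is complete, and arguably cleaner in that it derives Proposition~\ref{prop:hilbiso} from Proposition~\ref{prop:omegaiso} rather than quoting an external source; the paper's approach has the advantage of brevity and of keeping the logical dependence on Kuznetsov explicit.
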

\begin{proof}
In the special case $v=n\delta$,
the statement is proved in \cite[Theorem 4.6]{kuznetsov2007quiver}. However, the argument given there applies for
an arbitrary dimension vector $v$; note in particular that the auxiliary \cite[Corollary 3.5]{kuznetsov2007quiver}
does not need any assumption on the dimension vector (and we have that $\Hilb^{v}\big(\big[\SC^2/\Gamma\big]\big)$ is non-empty if and only if $\mathfrak{M}_{\theta}(1,v)$ is non-empty).
\end{proof}

Every face of the closed cone $\overline{C^{+}_v}$ is of the form
\[ \sigma_I\coloneqq\big\{\theta \in \overline{C^{+}_v} \mid \theta(\rho_i) > 0 \textrm{ if and only if } i \in I, \textrm{ and } \theta(\rho_i) = 0 \textrm{ otherwise} \big\} \]
for some non-empty subset $I \subseteq\{0,\dots,r\}$. When $v=n\delta$, these faces are contained in walls of the wall-and-chamber structure on $\Theta_{n\delta}$, though this need not be the case for arbitrary $v \in \mathbb{N}^{r+1}$.
In any case, the parameter
\begin{equation} \label{eqn:theta_I}
\theta_I (\rho_j) \coloneqq
\begin{cases}
\displaystyle -\sum_{i \in I} v_i & \textrm{for }j=\infty, \\
1 & \textrm{if } j \in I, \\
0 & \textrm{if } j \in \{0,1,\dots,r\} \setminus I
\end{cases}
\end{equation}
lies in the relative interior of the face $\sigma_I$.

\subsection{Resolution of singularities}
Let $v \in \mathbb{N}^{r+1}$ be a dimension vector and $I \subseteq \{0, \dots , r\}$. As mentioned above, the quiver variety $\mathfrak{M}_{\theta_I}(1,v)$ is singular in general for $\theta_I$ as defined in \eqref{eqn:theta_I} above. Moreover, for $\theta\in C_v^+$,
the morphism
\[ \mathfrak{M}_{\theta}(1,v) \to \mathfrak{M}_{\theta_I}(1,v)\]
obtained by variation of GIT need not be birational.
It is nevertheless possible to find a resolution of $\mathfrak{M}_{\theta_I}(1,v)$ using a
quiver variety if one modifies the dimension vector~$v$:

\begin{Proposition}\label{prop:surjvar}
Let $v \in \mathbb{N}^{r+1}$ and let $ I \subseteq \{0, \dots , r\}$ be non-empty. Assume further that
$\mathfrak{M}_{\theta_I}(1,v)$ is non-empty.
There exists a dimension vector $\widetilde{v}\in \mathbb{N}^{r+1}$ satisfying $\widetilde{v}_i \leq v_i$ for $0 \leq i \leq r$ and $\widetilde{v}_i = v_i$ for $i \in I$,
such that there is a projective resolution of singularities
\[
\pi_{I}\colon \ \mathfrak{M}_{\widetilde{\theta}}(1,\widetilde{v})\to \mathfrak{M}_{\theta_I}(1,v),
\]
where the stability condition $\widetilde{\theta}$ satisfies $\widetilde\theta\in C_{\widetilde v}^+$.
\end{Proposition}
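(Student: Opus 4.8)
The plan is to reduce $v$ to a smaller dimension vector on which $\theta_I$ becomes ``almost generic'', and then resolve by variation of GIT. Concretely, I would first produce a dimension vector $\widetilde v\le v$ with $\widetilde v_i=v_i$ for $i\in I$ such that $\mathfrak{M}_{\theta_I}(1,\widetilde v)\cong \mathfrak{M}_{\theta_I}(1,v)$ and the $\theta_I$-stable locus inside $\mathfrak{M}_{\theta_I}(1,\widetilde v)$ is dense, and then take a generic $\widetilde\theta\in C^{+}_{\widetilde v}$ close to $\theta_I$ and let $\pi_I$ be the variation-of-GIT morphism $\mathfrak{M}_{\widetilde\theta}(1,\widetilde v)\to \mathfrak{M}_{\theta_I}(1,\widetilde v)\cong \mathfrak{M}_{\theta_I}(1,v)$.

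To find $\widetilde v$: a closed point of $\mathfrak{M}_{\theta_I}(1,v)$ is represented by a $\theta_I$-polystable $\Pi$-module $M$, which splits as $M_0\oplus N$ with $M_0$ the unique $\theta_I$-stable summand carrying the framing and $N$ semisimple with $N_\infty=0$; since $\theta_I(\rho_i)>0$ for $i\in I$ while $\theta_I(N)=0$, the module $N$ is supported on $\{0,\dots,r\}\setminus I$, so $\dim M_0=(1,\widetilde v)$ with $\widetilde v\le v$ and $\widetilde v_i=v_i$ on $I$. I would stratify $\mathfrak{M}_{\theta_I}(1,v)$ by the (constructible) function $M\mapsto\widetilde v$ and, using irreducibility of $\mathfrak{M}_{\theta_I}(1,v)$ from Lemma~\ref{lem:quiver}(1), take the stratum $Z$ containing the generic point, which is therefore dense; call the corresponding vector $\widetilde v$. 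Since $Z$ is irreducible and there are only finitely many semisimple $\Pi$-modules of a given dimension vector supported off $I$, over a dense open subset of $Z$ the summand $N$ is a single fixed module $N_0$. Then $[M_0]\mapsto[M_0\oplus N_0]$ defines a morphism $\mathfrak{M}_{\theta_I}(1,\widetilde v)\to\mathfrak{M}_{\theta_I}(1,v)$ which is injective on closed points by Krull--Schmidt, proper with finite fibres over the affine quotient $\mathfrak{M}_0(1,v)$ (using Lemma~\ref{lem:quiver}(2)), hence finite, and an isomorphism over that dense open set; as the target is normal by Lemma~\ref{lem:quiver}(1), Zariski's main theorem forces this morphism to be an isomorphism.

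For the resolution step: since $\widetilde v_i=v_i$ for $i\in I$, the same parameter $\theta_I$ of \eqref{eqn:theta_I} lies in the face $\sigma_I$ of $\overline{C^{+}_{\widetilde v}}$. I would choose $\widetilde\theta\in C^{+}_{\widetilde v}$; as $C^{+}_{\widetilde v}$ sits inside a single GIT chamber it is automatically generic, so $\mathfrak{M}_{\widetilde\theta}(1,\widetilde v)$ is a smooth variety carrying a universal family. Because only finitely many dimension vectors of submodules are in play, one can take $\widetilde\theta$ close enough to $\theta_I$ that $\theta_I$-stability of a module of dimension $(1,\widetilde v)$ implies $\widetilde\theta$-stability, and that every $\widetilde\theta$-stable (equivalently, by genericity and indivisibility, $\widetilde\theta$-semistable) such module is $\theta_I$-semistable; thus $\widetilde\theta\ge\theta_I$, and Lemma~\ref{lem:quiver}(2) provides a projective morphism $\pi_I\colon \mathfrak{M}_{\widetilde\theta}(1,\widetilde v)\to\mathfrak{M}_{\theta_I}(1,\widetilde v)$. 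The $\theta_I$-stable locus of $\mathfrak{M}_{\widetilde\theta}(1,\widetilde v)$ is open, non-empty (it contains the modules $M_0$ above), hence dense by irreducibility, and $\pi_I$ restricts to an isomorphism from it onto the dense open set of the previous paragraph; so $\pi_I$ is birational, and being projective with dense image in the irreducible variety $\mathfrak{M}_{\theta_I}(1,v)$ it is surjective. Composing with the isomorphism $\mathfrak{M}_{\theta_I}(1,\widetilde v)\cong\mathfrak{M}_{\theta_I}(1,v)$, this $\pi_I$ is the desired projective resolution of singularities with $\widetilde\theta\in C^{+}_{\widetilde v}$.

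The hard part is the reduction in the second paragraph: showing that the isomorphism class of $\mathfrak{M}_{\theta_I}(1,v)$ genuinely does not see the multiplicities $v_k$ for $k\notin I$. This requires controlling exactly which semisimple summands supported away from $I$ occur in a $\theta_I$-polystable module, checking that after passing to the dense stratum a single summand $N_0$ survives, and verifying that ``adding $N_0$'' is finite so that normality of the target upgrades the resulting bijection to an isomorphism. Everything else is the standard variation-of-GIT package for quiver varieties together with Lemma~\ref{lem:quiver}.
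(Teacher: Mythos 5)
Your strategy is essentially the paper's: stratify $\mathfrak{M}_{\theta_I}(1,v)$, identify the dense stratum, observe that a $\theta_I$-polystable module splits off a unique stable summand $M_0$ carrying the framing plus a semisimple tail supported on $\{0,\dots,r\}\setminus I$, set $\widetilde v = \dim M_0$, and then resolve by variation of GIT from some $\widetilde\theta\in C^+_{\widetilde v}$. The paper packages the stratification step via Nakajima's \cite[Proposition~2.25]{nakajima2009quiver}, giving $\mathfrak{M}_{\theta_I}(1,v)_\gamma\cong\mathfrak{M}^s_{\theta_I}(1,\widetilde v)\times Y$ for the dense stratum, and then argues that $Y$ is a single reduced point. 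Two remarks on your execution.

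First, a small but real imprecision: you say that ``over a dense open subset of $Z$ the summand $N$ is a single fixed module $N_0$'' because there are only finitely many semisimple $\Pi$-modules of the relevant dimension vector supported off $I$. The true statement is cleaner and necessary for the argument: since $I\neq\varnothing$, removing the vertices in $I$ from the affine ADE diagram leaves a disjoint union of finite-type Dynkin diagrams, and simple representations of the preprojective algebra of a finite-type quiver are the one-dimensional vertex simples \cite{savage2011quiver}. Hence for \emph{every} polystable module with dimension $(1,\widetilde v)$ at $I\cup\{\infty\}$ the tail $N$ is exactly $\bigoplus_k S_k^{\oplus(v_k-\widetilde v_k)}$, not just generically; this is precisely how the paper kills the factor $Y$ (it is a single closed point) and is needed to write down the morphism $\pi_I$ on the whole space.

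Second, and this is the genuine gap you yourself flag: your route detours through an intermediate claim that ``adding $N_0$'' gives an \emph{isomorphism} $\mathfrak{M}_{\theta_I}(1,\widetilde v)\cong\mathfrak{M}_{\theta_I}(1,v)$, which you try to get from ``proper with finite fibres $\Rightarrow$ finite + birational + normal target $\Rightarrow$ isomorphism''. But Lemma~\ref{lem:quiver}(2) only gives projectivity of the VGIT morphism $\mathfrak{M}_\theta(1,v)\to\mathfrak{M}_{\theta'}(1,v)$ for a \emph{fixed} dimension vector; it says nothing about the ``add $N_0$'' map between quiver varieties with \emph{different} dimension vectors, so properness of that map is not established. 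The paper sidesteps this entirely: it constructs $\pi_I\colon\mathfrak{M}_{\widetilde\theta}(1,\widetilde v)\to\mathfrak{M}_{\theta_I}(1,v)$ directly from an equivariant inclusion of moment-map fibres $\widetilde\mu^{-1}(0)\hookrightarrow\mu^{-1}(0)$, proves $\pi_I$ is projective by adapting \cite[Lemma~2.4]{bellamy2016symplectic}, and proves birationality by factoring $\pi_I$ through $\mathfrak{M}_{\theta_I}(1,\widetilde v)$ and checking that each factor is birational (the first by \cite[Lemma~2.12]{nakajima2009quiver}, the second because on the dense stable stratum it is the isomorphism of \cite[Proposition~2.25]{nakajima2009quiver}). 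No intermediate isomorphism is needed, and the properness issue for the ``add $N_0$'' map never arises. If you want to keep your route, you would still need to check directly that the ``add $N_0$'' morphism is proper (e.g.\ that the induced map on affine quotients $\mathfrak{M}_0(1,\widetilde v)\to\mathfrak{M}_0(1,v)$ is finite), which is extra work the paper's organization avoids.
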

\begin{proof}
 Following Nakajima~\cite[Section 6]{nakajima1994instantons} (cf.\ \cite[Section 3.5]{bellamy2020birational}),
 there is a finite stratification
\begin{equation*}
\mathfrak{M}_{\theta_I}(1,v) = \coprod_{\gamma} \mathfrak{M}_{\theta_I}(1,v)_{\gamma} \end{equation*}
by smooth locally-closed subvarieties, where the union is over conjugacy classes of subgroups of the reductive group applied during the GIT construction of $\mathfrak{M}_{\theta_I}(1,v)$, and where the stratum $\mathfrak{M}_{\theta_I}(1,v)_{\gamma}$ consists of $S$-equivalence classes of semistable modules with polystable representative having stabiliser in the conjugacy class $\gamma$. Since $\mathfrak{M}_{\theta_I}(1,v)$ is irreducible by Lemma~\ref{lem:quiver}, there is a unique dense stratum and we fix a conjugacy class $\gamma$
 such that the dense open stratum in~$\mathfrak{M}_{\theta_I}(1,v)$ is $\mathfrak{M}_{\theta_I}(1,v)_\gamma$. Apply \cite[Proposition 2.25]{nakajima2009quiver} to obtain a dimension vector $\widetilde{v}\in \mathbb{N}^{r+1}$ as in the statement of the lemma
 together with an isomorphism
\begin{equation} \label{eqn:gammaIsom}
\mathfrak{M}_{\theta_I}(1,v)_{\gamma} \cong \mathfrak{M}^s_{\theta_I}(1,\widetilde{v}) \times {Y},
\end{equation}
 where $\mathfrak{M}^s_{\theta_I}(1,\widetilde{v})$ is the fine moduli space of $\theta_I$-stable $\Pi$-modules of dimension vector $(1,\widetilde{v})$, and ${Y} \subset \mathfrak{M}_{0}(0,v-\widetilde{v})$ is a locally-closed subvariety.
 Note that $\mathfrak{M}_{\theta_I}(1,v)_{\gamma}$ is non-empty by assumption, hence so are $\mathfrak{M}^s_{\theta_I}(1,\widetilde{v})$ and ${Y}$.

 We claim that ${Y}$ is a closed point. Indeed, $\mathfrak{M}_0(0, v-\widetilde{v})$ parametrises
 0-polystable representations, i.e., direct sums of simple representations. The framing is 0-dimensional,
and since~$I$ is non-empty, there is at least one $i$ such that $v_i = \widetilde{v}_i$. It follows that the representations parameterised by ${Y}$
are supported on a doubled quiver obtained from an affine ADE diagram with at least one vertex removed. Removing a vertex from an affine ADE diagram leaves a graph in which every connected component is a subgraph of an finite type ADE diagram. Therefore $\mathfrak{M}_0(0, v-\widetilde{v})$ parametrises direct sums of simple representations of preprojective algebras of doubled quivers associated to finite type ADE diagrams. A simple representation of any such quiver is one-dimensional \cite[Lemma~2.2]{savage2011quiver}, so the only polystable representation of dimension vector $(0, v-\widetilde{v})$ is
necessarily of the form
\begin{equation}\label{eqn:directsumsimples}
\bigoplus_{0\leq i\leq r}S_i^{\oplus(v_i-\tilde{v}_i)},
\end{equation}
where each $S_i$ is a vertex simple $\Pi$-module. Therefore $\mathfrak{M}_0(0, v-\widetilde{v})$ is a closed point, and hence so too is ${Y}$ because ${Y}\neq \varnothing$. Thus $\mathfrak{M}_{\theta_I}(1,v)_{\gamma} \cong \mathfrak{M}^s_{\theta_I}(1,\widetilde{v})$.

For
 $\widetilde\theta\in C_{\widetilde v}^+$, we claim that the desired morphism $\pi_I\colon \mathfrak{M}_{\widetilde{\theta}}(1,\widetilde{v})\to \mathfrak{M}_{\theta_I}(1,v)$ is induced by sending each $\tilde{\theta}$-stable $\Pi$-module $V$ of dimension vector $(1,\tilde{v})$ to
 \[
 \pi_I(V):=V\oplus\bigoplus_{0\leq i\leq r}S_i^{\oplus(v_i-\tilde{v}_i)}.
 \]
 Indeed, if $\tilde{\mu}$ and $\mu$ denote the moment maps for the actions of $G_{\tilde{v}}:=\prod_{0\leq i\leq r}\GL(\tilde{v}_i)$ and $G_{v}:=\prod_{0\leq i\leq r}\GL(v_i)$ on the representation spaces of $\Pi$-modules of dimension vectors $(1,\tilde{v})$ and $(1,v)$ respectively, then the above assignment induces an inclusion $\tilde{\mu}^{-1}(0)\hookrightarrow \mu^{-1}(0)$ that is equivariant with respect to the actions of $G_{\tilde{v}}$ and $G_v$ on $\tilde{\mu}^{-1}(0)$ and $\mu^{-1}(0)$ respectively. Any submodule of~$\pi_I(V)$ is $W\oplus W'$ for submodules $W\subseteq V$ and $W'\subseteq \bigoplus_{0\leq i\leq r}S_i^{\oplus(v_i-\tilde{v}_i)}$, and we have $\theta_I(W\oplus W') = \theta_I(W)\geq 0$. Thus, the image of the $\tilde{\theta}$-stable locus of $\tilde{\mu}^{-1}(0)$ lies in the $\theta_I$-semistable locus of~$\mu^{-1}(0)$, and this inclusion induces the morphism $\pi_I$ as claimed.

 It remains to establish the properties of $\pi_I$. Adapting the proof from \cite[Lemma~2.4]{bellamy2016symplectic} shows that~$\pi_I$ is a projective morphism, while \cite[Theorem~1.15]{bellamy2016symplectic} gives that $ \mathfrak{M}_{\widetilde{\theta}}(1,\widetilde{v})$ is non-singular as~$\widetilde{\theta}$ is generic. Our explicit description of~$\pi_I$ shows that it factors via the morphism
\begin{equation*}
\mathfrak{M}_{\widetilde{\theta}}(1,\widetilde{v}) \to \mathfrak{M}_{\theta_I}(1,\widetilde{v})
\end{equation*}
obtained by variation of GIT quotient. This morphism is birational by \cite[Lemma~2.12]{nakajima2009quiver}. To see that the induced morphism $\mathfrak{M}_{\theta_I}(1,\widetilde{v}) \to \mathfrak{M}_{\theta_I}(1,v)$ is also birational, notice that its restriction to the (open dense) stable locus $\mathfrak{M}^s_{\theta_I}(1,\widetilde{v})$ simply adds the summand~\eqref{eqn:directsumsimples} to each representative $\Pi$-module, so it agrees with the isomorphism from \eqref{eqn:gammaIsom} above. Thus, the image of $\mathfrak{M}^s_{\theta_I}(1,\widetilde{v})$ under this induced morphism is $\mathfrak{M}_{\theta_I}(1,v)_\gamma$ which is dense in $\mathfrak{M}_{\theta_I}(1,v)$ by our choice of $\gamma$. This completes the proof.
 \end{proof}

Recall that $\mathfrak{M}_{\theta_I}(1,v)$ is a coarse moduli space of $\theta_I$-polystable modules up to
isomorphism \cite[Proposition 2.9(2)]{nakajima2009quiver}, and that every $\theta_I$-polystable module has a unique summand that has dimension 1 at the vertex $\infty$.
\begin{Lemma}
\label{lem:vwtildev}
Let $M$ be a $\theta_I$-polystable representative for an arbitrary class $[M] \in \mathfrak{M}_{\theta_I}(1,v)$, and let $M_{\infty}$ be the unique summand that has dimension 1 at the vertex $\infty$.
Then
\[ \dim_i\,M_{\infty} \leq \widetilde{v}_i\]
for $0 \leq i \leq r$ where $\widetilde{v}$ is as in Proposition~{\rm \ref{prop:surjvar}}.
\end{Lemma}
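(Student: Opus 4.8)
The plan is to compare the $\theta_I$-polystable module $M$ with the decomposition established in Proposition~\ref{prop:surjvar}, exploiting the structure of $\theta_I$-polystable modules together with the fact that the summand $M_\infty$ is itself $\theta_I$-semistable of dimension $1$ at $\infty$. First I would recall that, since $\theta_I$ vanishes on all vertices outside $\{\infty\}\cup I$, a $\theta_I$-semistable $\Pi$-module $M_\infty$ of dimension vector $(1,w)$ with $w\le v$ decomposes (up to $S$-equivalence) so that its socle/quotient structure only sees the vertices in $\{\infty\}\cup I$; more precisely, since $M_\infty$ has dimension $1$ at $\infty$ and is $\theta_I$-semistable, it has a unique simple $\theta_I$-stable quotient containing the framing, and the rest of $M_\infty$ is built from vertex-simple modules $S_i$ with $i\notin I$ as repeated extensions. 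The key point is that $M_\infty$ is in fact $\theta_I$-stable: any proper nonzero submodule $N\subsetneq M_\infty$ either contains the framing, in which case $\theta_I(N)\geq \theta_I(M_\infty)=0$ forces $N=M_\infty$ by the polystable/indecomposable choice of $M_\infty$, or does not, in which case $N$ is supported away from $\infty$ and $\theta_I(N)=\sum_{i\in I}(\dim_i N)\geq 0$ with equality only if $N$ is supported entirely outside $I$—but such a submodule would split off as a $\theta_I$-polystable summand, contradicting the indecomposability of $M_\infty$. Hence $M_\infty$ is genuinely $\theta_I$-stable.

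Next I would invoke the construction of $\widetilde v$ from Proposition~\ref{prop:surjvar}: the dense stratum $\mathfrak{M}_{\theta_I}(1,v)_\gamma$ was identified with $\mathfrak{M}^s_{\theta_I}(1,\widetilde v)$, the fine moduli space of $\theta_I$-\emph{stable} $\Pi$-modules of dimension vector $(1,\widetilde v)$, and $\widetilde v$ was characterised as the dimension vector of the stable part arising from the generic stabiliser $\gamma$. Since the locus of $\theta_I$-stable modules of dimension vector $(1,w)$ is non-empty only for $w=\widetilde v$ (this is exactly how $\widetilde v$ is singled out: it is the dimension vector of the unique dense stratum, equivalently the dimension vector for which $\mathfrak{M}^s_{\theta_I}\neq\varnothing$ among those $\le v$ agreeing with $v$ on $I$—see \cite[Proposition~2.25]{nakajima2009quiver} and the surrounding discussion), and since we have just shown $M_\infty$ is $\theta_I$-stable of dimension $(1,w)$ with $w\le v$, we can conclude $\dim_i M_\infty \leq \widetilde v_i$ for all $i$. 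Concretely, $M_\infty$ being a $\theta_I$-stable $\Pi$-module of dimension $1$ at $\infty$ defines a point of $\mathfrak{M}^s_{\theta_I}(1,w)$, and the analysis of \cite[Proposition~2.25]{nakajima2009quiver} shows the only $w\le v$ with $w_i=v_i$ for $i\in I$ (note $\dim_i M_\infty = v_i$ for $i\in I$ since the $S_i$ with $i\notin I$ contribute nothing to vertices in $I$, and the full module $M$ has the right dimension there) admitting such a stable module is $w=\widetilde v$, giving the claimed inequality.

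The main obstacle I anticipate is making rigorous the step that forces $w=\widetilde v$ rather than merely $w\le v$ with $w_i=v_i$ on $I$: a priori there could be several dimension vectors in this range supporting $\theta_I$-stable modules. The resolution is that Nakajima's stratification produces $\widetilde v$ as precisely the dimension vector associated to the \emph{generic} stabiliser, and \cite[Proposition~2.25]{nakajima2009quiver} guarantees that every $\theta_I$-polystable module of dimension $(1,v)$ decomposes as a $\theta_I$-stable module of dimension $(1,\widetilde v)$ plus a direct sum of vertex-simples $S_i$ with $i\notin I$ and with multiplicities $v_i-\widetilde v_i\geq 0$; the uniqueness of the framed summand then identifies $M_\infty$ with the $\theta_I$-stable part, whose dimension vector is exactly $(1,\widetilde v)$, so in fact equality $\dim_i M_\infty = \widetilde v_i$ holds, which is stronger than what is asked. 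I would therefore present the argument as: (i) $M_\infty$ is $\theta_I$-stable; (ii) by \cite[Proposition~2.25]{nakajima2009quiver} the $\theta_I$-stable summand of any $\theta_I$-polystable module of dimension $(1,v)$ has dimension vector $(1,\widetilde v)$; (iii) conclude $\dim_i M_\infty \le \widetilde v_i$ (indeed with equality).
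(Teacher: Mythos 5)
Your step (i) is correct but superfluous: $M$ is $\theta_I$-polystable by definition, i.e., a direct sum of $\theta_I$-stable modules, and $M_\infty$ is one of those direct summands, so it is automatically $\theta_I$-stable.

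The real gap is in step (ii). You assert that \cite[Proposition~2.25]{nakajima2009quiver} "guarantees that every $\theta_I$-polystable module of dimension $(1,v)$ decomposes as a $\theta_I$-stable module of dimension $(1,\widetilde v)$ plus a direct sum of vertex-simples," and then conclude equality $\dim_i M_\infty = \widetilde v_i$. This is false in general, and it misreads Nakajima's result. Proposition~2.25 describes the structure of a \emph{single} stratum $\mathfrak{M}_{\theta_I}(1,v)_\gamma$: for fixed stabiliser type $\gamma$, all polystable modules in that stratum have a common decomposition type, and the framed stable summand has a fixed dimension vector depending on $\gamma$. The vector $\widetilde v$ is the one associated to the \emph{dense} stratum. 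An arbitrary class $[M]\in\mathfrak{M}_{\theta_I}(1,v)$ may lie in a smaller stratum, where the framed stable summand $M_\infty$ can have a strictly smaller dimension vector (with the missing dimensions absorbed into extra vertex-simple summands). This is exactly why the lemma is stated as an inequality and not an equality: your argument, if correct, would make the lemma trivially an equality, but that cannot be extracted from the stratification alone. In particular, nothing in Nakajima's stratification rules out \emph{a priori} a stratum whose framed stable summand has $\dim_k$ \emph{larger} than $\widetilde v_k$ at some vertex $k\notin I$; excluding that possibility requires an extra global argument.

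The paper supplies that extra input via the morphism $\pi_I$ from Proposition~\ref{prop:surjvar}: $\pi_I$ is proper and birational and the target is irreducible, hence $\pi_I$ is \emph{surjective}. Therefore every $[M]$ equals $[\pi_I(V)]$ for some $\widetilde\theta$-stable $\Pi$-module $V$ of dimension vector $(1,\widetilde v)$, and $\pi_I(V)=V\oplus\bigoplus_k S_k^{\oplus(v_k-\widetilde v_k)}$. Since the $S_k$ vanish at vertex $\infty$, the unique $\theta_I$-stable Jordan--H\"older factor with $\dim_\infty=1$ must be a subquotient of $V$, giving $\dim_i M_\infty\leq \dim_i V=\widetilde v_i$. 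You should rework your proof along these lines; the local structure theorem for a single stratum is not enough, and the surjectivity of $\pi_I$ (or some equivalent global input) is essential.
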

\begin{proof}
{The morphism $\pi_I$ from Proposition~\ref{prop:surjvar} is surjective, because quiver varieties are irreducible and $\pi_I$ is both proper and birational. Therefore $[M]=[\pi_I(V)]$ for some $\tilde{\theta}$-stable $\Pi$-module $V$ of dimension vector $(1,\tilde{v})$. Each vertex simple $S_i$ arising as a summand of $\pi_I(V)$} has dimension 0 at the vertex $\infty$, {so }we must have $M_{\infty} \subseteq {V}$. This implies the lemma.
\end{proof}

Suppose now that $\mathfrak{M}_{\theta_I}(1,v)$ is non-empty, and apply Proposition~\ref{prop:surjvar} to obtain $\tilde{v}\leq v$ and a resolution of singularities $\pi_{I}\colon \mathfrak{M}_{\widetilde{\theta}}(1,\widetilde{v})\to \mathfrak{M}_{\theta_I}(1,v)$ for $\theta\in C^{+}_{\tilde{v}}$. For any $\theta'\in \Theta_{{\tilde{v}}}$, there is a~line bundle $L_{C^{+}_{{\tilde{v}}}}(\theta')\coloneqq \bigotimes_{0\leq i\leq r} \det(\mathcal{R}_i)^{\theta'_i}$ on $\mathfrak{M}_{\tilde{\theta}}(1,{\tilde{v}})$. The line bundle $L_I\coloneqq L_{C^{+}_{\tilde{v}}}(\theta_I)$ plays an important role in what follows.
\begin{Lemma}\label{lem:factorimm}
Let $\theta\in C^{+}_{\tilde{v}}$, Then
 \begin{enumerate}\itemsep=0pt
 \item[$1)$] for each $\theta' \in \overline{C^{+}_{\tilde{v}}}$ the line bundle $L_{C^{+}_{\tilde{v}}}(\theta')$ on $\mathfrak{M}_{\tilde{\theta}}(1,{\tilde{v}})$ is globally generated;
 \item[$2)$] for any $I\subseteq \{0,\dots, r\}$, after multiplying $\theta_I$ by a positive integer if necessary, the morphism to the linear series of $L_I$ decomposes as the composition of the
 morphism $\pi_I$ from Proposition~{\rm \ref{prop:surjvar}} and a closed immersion:
 \begin{equation*}
 \begin{tikzcd}
 \mathfrak{M}_{\tilde{\theta}}(1,\tilde{v}) \ar[d,swap,"{\pi_I}"]\ar[r,"\varphi_{\vert L_I\vert}"] & \vert L_I\vert. \\ \mathfrak{M}_{\theta_I}(1,v)\ar[ur,hook] &
 \end{tikzcd}
 \end{equation*}
 \end{enumerate}
 \end{Lemma}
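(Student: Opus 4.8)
The plan is to leverage the standard GIT-theoretic meaning of the line bundles $L_{C^+_{\tilde v}}(\theta')$ as the polarising bundles obtained from variation of GIT on the quiver variety, following the approach in \cite[Section~2]{nakajima2009quiver} and the analogous arguments of \cite[Section~6]{craw2019punctual}. For part~(1), the point is that for $\theta\in C^+_{\tilde v}$ generic and $\theta'\in\overline{C^+_{\tilde v}}$, the GIT chamber containing $\theta$ has $\theta'$ in its closure, so the linearisation corresponding to $\theta'$ is a limit of the ample linearisation associated to $\theta$. By the general theory of variation of GIT, the morphism $\mathfrak{M}_{\tilde\theta}(1,\tilde v)\to\mathfrak{M}_{\theta'}(1,\tilde v)$ is a projective morphism, and the line bundle $L_{C^+_{\tilde v}}(\theta')$ is (a positive power of) the pullback of an ample bundle on $\mathfrak{M}_{\theta'}(1,\tilde v)$. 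Pullbacks of ample bundles along projective morphisms are globally generated, which gives~(1). I would cite the explicit realisation of these determinantal bundles as the GIT polarisations, as recalled in the paragraph preceding the lemma, and the fact that $\mathfrak{M}_{\tilde\theta}(1,\tilde v)\to\mathfrak{M}_0(1,\tilde v)$ is projective (Lemma~\ref{lem:quiver}(2)).

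For part~(2), I would first observe that the specific bundle $L_I=L_{C^+_{\tilde v}}(\theta_I)$ is, up to a positive power, the pullback under $\mathfrak{M}_{\tilde\theta}(1,\tilde v)\to\mathfrak{M}_{\theta_I}(1,\tilde v)$ of the ample polarising bundle $\mathcal{O}(1)$ on $\mathfrak{M}_{\theta_I}(1,\tilde v)$ coming from its GIT construction; this is because $\theta_I$ lies in the relative interior of the face $\sigma_I$ of $\overline{C^+_{\tilde v}}$, and the determinantal formula for $L_I$ matches the GIT linearisation for parameter $\theta_I$ by the very definition~\eqref{eqn:theta_I}. Then I would use the factorisation of $\pi_I$ established in the proof of Proposition~\ref{prop:surjvar}, namely
\[
\mathfrak{M}_{\tilde\theta}(1,\tilde v)\longrightarrow\mathfrak{M}_{\theta_I}(1,\tilde v)\longrightarrow\mathfrak{M}_{\theta_I}(1,v),
\]
where the first morphism is birational (VGIT) and the second is the closed immersion adding the fixed summand $\bigoplus_i S_i^{\oplus(v_i-\tilde v_i)}$. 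Since $\mathfrak{M}_{\theta_I}(1,\tilde v)$ is projective over the affine $\mathfrak{M}_0(1,\tilde v)$ with $\mathcal{O}(1)$ very ample after replacing $\theta_I$ by a multiple, the morphism $\varphi_{|L_I|}$ on $\mathfrak{M}_{\tilde\theta}(1,\tilde v)$ factors through $\mathfrak{M}_{\theta_I}(1,\tilde v)$ by a closed immersion; composing with the closed immersion $\mathfrak{M}_{\theta_I}(1,\tilde v)\hookrightarrow\mathfrak{M}_{\theta_I}(1,v)$ (noting the latter is an isomorphism onto the dense stratum $\mathfrak{M}_{\theta_I}(1,v)_\gamma$, but is literally a closed embedding of the whole variety since it simply adds a fixed module — here one must be slightly careful and instead argue that $L_I$ descends along $\pi_I$ itself) yields the claimed diagram.

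The main obstacle I anticipate is the last identification: strictly speaking, $L_I$ needs to be shown to descend along the full morphism $\pi_I$ to $\mathfrak{M}_{\theta_I}(1,v)$, not merely along the intermediate VGIT morphism to $\mathfrak{M}_{\theta_I}(1,\tilde v)$. I would handle this by checking that $L_I$ is trivial on the fibres of $\pi_I$: the fibres of $\pi_I$ over the dense stratum are points, and over deeper strata the fibres are (products of) quiver varieties of smaller affine ADE type on which the relevant determinantal characters restrict to zero because $\theta_I$ kills precisely the simple summands $S_k$ with $k\notin I$ that can degenerate. More precisely, a point of a positive-dimensional fibre of $\pi_I$ corresponds to varying a $\tilde\theta$-stable module $V$ within its $\theta_I$-S-equivalence class while keeping the added simples fixed, and $L_I$ pairs trivially with such variations by the definition of $\theta_I$-semistability. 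This is the technical heart; once it is in place, Seshadri's descent lemma (or the universal property of the GIT quotient $\mathfrak{M}_{\theta_I}(1,v)$) produces the descended ample bundle and the closed immersion, completing~(2).
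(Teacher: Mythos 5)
For part~(1) your argument has a genuine gap. You claim that ``pullbacks of ample bundles along projective morphisms are globally generated''; this is false. An ample line bundle on a projective (or projective-over-affine) variety need not be globally generated --- only a sufficiently high power is --- and pulling back does not help. Since $L_{C^+_{\tilde v}}(\theta')$ is a \emph{fixed} line bundle (not a power of your choosing, and there is no ``after multiplying by a positive integer'' caveat in part~(1)), the flaw cannot be patched without weakening the statement. The paper's argument avoids this entirely: it invokes \cite[Corollary~2.4]{craw2018multigraded} to assert that the tautological bundles $\mathcal{R}_i$ on $\mathfrak{M}_{\tilde\theta}(1,\tilde v)$ are globally generated when $\tilde\theta\in C^+_{\tilde v}$; hence $\det(\mathcal{R}_i)$ is globally generated, and therefore so is any tensor product $\bigotimes_i\det(\mathcal{R}_i)^{\theta'_i}$ with all $\theta'_i\geq 0$. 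That gives~(1) directly and with no powers involved.

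For part~(2) your overall strategy is aligned with the paper's: establish $\pi_I^*(L'_I)=L_I$ and then invoke uniqueness of the morphism to a complete linear series (Hartshorne~II.7.1). But the mechanism you propose for the pullback identity --- checking triviality of $L_I$ on the fibres of $\pi_I$ and appealing to Seshadri-type descent --- is weaker and more delicate than needed. Fibrewise triviality alone would only hand you \emph{some} line bundle on the target pulling back to $L_I$; you would still have to identify it with the specific GIT polarisation $L'_I$, and the descent criterion itself needs hypotheses you have not verified (normality of the base, connected fibres, stabiliser actions). The paper instead works upstream at the GIT level: the explicit construction of $\pi_I$ in Proposition~\ref{prop:surjvar} comes from a $G_{\tilde v}$--$G_v$-equivariant closed embedding $\tilde\mu^{-1}(0)\hookrightarrow\mu^{-1}(0)$ under which the $G_v$-equivariant line bundle determined by the character $\theta_I$ restricts to the $G_{\tilde v}$-equivariant line bundle determined by the same character; the equality $\pi_I^*(L'_I)=L_I$ then follows directly after descent. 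Your intermediate factorisation through $\mathfrak{M}_{\theta_I}(1,\tilde v)$ is a detour that you yourself flag as needing correction; the paper's argument makes no such detour.
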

 \begin{proof}
 Since $\theta\in C^{+}_{\tilde{v}}$, the tautological bundles $\mathcal{R}_i$ on the quiver variety $\mathfrak{M}_{\tilde{\theta}}(1,{\tilde{v}})$ are globally generated for $i\in {Q_0}$ by \cite[Corollary~2.4]{craw2018multigraded}. Therefore, $L_{C^{+}_{\tilde{v}}}(\theta')$ is globally generated as $\theta_i'\geq 0$ for all $0\leq i\leq r$. In particular, since $\theta_I\in \overline{C^{+}_{\tilde{v}}}$, the rational map $\varphi_{\vert L_I\vert}$ is defined everywhere.

 For (2), after taking a positive multiple of $\theta_I$ if necessary, we may assume that the polarising ample
 line bundle $L^\prime_I$ on $\mathfrak{M}_{\theta_I}(1,v)$ is very ample. {Our explicit construction of $\pi_I$ from the proof of Proposition~\ref{prop:surjvar} implies that the $G_v$-equivariant line bundle on $\mu^{-1}(0)$ determined by the character associated to $\theta_I$ restricts under the inclusion $\tilde{\mu}^{-1}(0)\hookrightarrow \mu^{-1}(0)$ to the $G_{\tilde{v}}$-equivariant line bundle on $\tilde{\mu}^{-1}(0)$ determined by the character associated to $\theta_I$. Thus, after descent,
 we obtain $\pi_I^*(L^\prime_I) = L_I$ as required.}

 If $\varphi_{\vert L^\prime_I\vert}\colon \mathfrak{M}_{\theta_I}(1,v)\to \vert L^\prime_I\vert$ is the closed immersion, then
 \begin{equation*}
 (\varphi_{\vert L^\prime_I\vert}\circ \pi_I)^*(\mathcal{O}_{\vert L^\prime_I\vert}(1)) = \pi_I^*(L^\prime_I) = L_I = \varphi^*_{\vert L_I \vert}\big(\mathcal{O}_{\vert L_I\vert}(1)\big)
 \end{equation*}
 on $\mathfrak{M}_{\tilde{\theta}}(1,{\tilde{v}})$. The morphism to a complete linear series of a specific line bundle is unique up to an automorphism of the linear series \cite[Chapter~II, Theorem~7.1]{hartshorne1977algebraic}. Thus, after a change of basis on $H^0(L^\prime_I)$ if necessary, we have $\varphi_{\vert L_I \vert} = \varphi_{\vert L^\prime_I\vert}\circ \pi_I$ as required.
 \end{proof}

\section{Quiver varieties and moduli spaces for cornered algebras}\label{sec:quivfine}
\subsection{The key commutative diagram}
Once and for all, fix a non-empty $I\subseteq \{0,1,\dots,r\}$
and a dimension vector $n_I=(n_i)_{i \in I} \in \mathbb{N}^{I}$.
 \begin{Proposition}
 \label{prop:key}
 Let $v=(v_j)_{0 \leq j \leq r} \in \mathbb{N}^{r+1}$ be any vector satisfying $v_i=n_i$ for all $i \in I$. Suppose that $\mathfrak{M}_{\theta_I}(1,v)$ is non-empty, and define $\widetilde{v}$ as in Proposition~{\rm \ref{prop:surjvar}}. Then $\widetilde{v}$ satisfies $\widetilde{v}_i=n_i$ for $i\in I$, and there is a commutative diagram
\begin{equation*}
\begin{tikzcd}
 & \mathfrak{M}_{\widetilde{\theta}}(1, \widetilde{v}) \arrow[dl, "\pi_I", swap ] \arrow[dr, "\tau_I" ]& \\
 \mathfrak{M}_{\theta_I}(1,v) \arrow[rr, "\iota_I" ] & & \mathcal{M}_{\PiQ_I} (1,n_I)
\end{tikzcd}
\end{equation*}
 of schemes over $\C$ for any $\widetilde{\theta}\in C^+_{\widetilde{v}}$,
where $\pi_I$ is surjective and where $\iota_I$ is a closed immersion. In particular, the fine moduli space $\mathcal{M}_{\PiQ_I}(1,n_I)$ is non-empty.
\end{Proposition}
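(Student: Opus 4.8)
The statement bundles together four claims: that $\widetilde{v}_i=n_i$ for $i\in I$; that morphisms $\tau_I$ and $\iota_I$ exist making the triangle commute; that $\pi_I$ is surjective and $\iota_I$ is a closed immersion; and that $\mathcal M_{\PiQ_I}(1,n_I)\ne\varnothing$. The first is immediate from $\widetilde{v}_i=v_i$ ($i\in I$) in Proposition~\ref{prop:surjvar} together with the hypothesis $v_i=n_i$; surjectivity of $\pi_I$ is part of Proposition~\ref{prop:surjvar}; and non-emptiness of $\mathcal M_{\PiQ_I}(1,n_I)$ will follow once $\iota_I$ is built, since $\mathfrak M_{\theta_I}(1,v)\ne\varnothing$ by hypothesis. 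So the work is to produce $\tau_I$ and $\iota_I$ and to show that $\iota_I$ is a closed immersion.

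To construct $\tau_I$, I would use Proposition~\ref{prop:hilbiso} to identify $\mathfrak M_{\widetilde{\theta}}(1,\widetilde{v})$ with $\Hilb^{\widetilde{v}}\big(\big[\SC^2/\Gamma\big]\big)$ (valid since $\widetilde{\theta}\in C^+_{\widetilde{v}}$), and read its points, via the Morita equivalence of Proposition~\ref{prop_morita} and Corollary~\ref{cor_fd}, as cyclic $\PiGamma$-modules $\PiGamma e_0/fJ$ of dimension vector $\widetilde{v}$. Cornering by $e_I$ sends such a module to the quotient $e_I(\PiGamma e_0/fJ)$ of $R_I=e_I\PiGamma e_0$, of dimension vector $n_I$ because $\widetilde{v}_i=n_i$ for $i\in I$; since the $\rho_i$-isotypic pieces of the universal quotient on $\Hilb^{\widetilde{v}}\big(\big[\SC^2/\Gamma\big]\big)$ are locally free of rank $\widetilde{v}_i$, this is a morphism of functors, and composing with Proposition~\ref{prop:omegaiso} defines $\tau_I\colon\mathfrak M_{\widetilde{\theta}}(1,\widetilde{v})\to\mathcal M_{\PiQ_I}(1,n_I)$. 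By construction the tautological bundles satisfy $\tau_I^*T_i=\mathcal R_i$ for $i\in I$, so $\tau_I^*\mathcal L_I=\bigotimes_{i\in I}\det(\mathcal R_i)^m=L_I^{\otimes m}$, where $L_I=L_{C^+_{\widetilde{v}}}(\theta_I)=\bigotimes_{i\in I}\det(\mathcal R_i)$ because $\theta_I$ has weight $1$ at the vertices of $I$ and $0$ at the rest.

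For $\iota_I$, the plan is to descend $\tau_I$ along $\pi_I$ using Lemma~\ref{lem:factorimm}. After a harmless scaling we may assume $\mathcal L_I$ is very ample and that Lemma~\ref{lem:factorimm}(2) applies to $L_I^{\otimes m}$; then $\varphi_{|\mathcal L_I|}$ embeds $\mathcal M_{\PiQ_I}(1,n_I)$ as a closed subscheme of $|\mathcal L_I|$, and $\varphi_{|\mathcal L_I|}\circ\tau_I$ corresponds to the line bundle $L_I^{\otimes m}$ together with the map $\tau_I^*\colon H^0(\mathcal L_I)\to H^0(L_I^{\otimes m})$ on sections. Since the morphism attached to the complete linear system of $L_I^{\otimes m}$ factors as $\pi_I$ followed by a closed immersion $j\colon\mathfrak M_{\theta_I}(1,v)\hookrightarrow|L_I^{\otimes m}|$ by Lemma~\ref{lem:factorimm}(2), the morphism $\varphi_{|\mathcal L_I|}\circ\tau_I$ factors through $\pi_I$ as well, say as $g\circ\pi_I$ with $g\colon\mathfrak M_{\theta_I}(1,v)\to|\mathcal L_I|$ built from $j$ and $\tau_I^*$ (this uses that $\pi_I$ is surjective and $\varphi_{|\mathcal L_I|}\circ\tau_I$ is everywhere defined, so that $g$ is a genuine morphism). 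The image of $g$ lies in the closed subscheme $\varphi_{|\mathcal L_I|}(\mathcal M_{\PiQ_I}(1,n_I))$ because $\pi_I$ is surjective, and as $\mathfrak M_{\theta_I}(1,v)$ is reduced, $g$ therefore factors through $\varphi_{|\mathcal L_I|}$; this defines $\iota_I$, and $\iota_I\circ\pi_I=\tau_I$ follows since $\varphi_{|\mathcal L_I|}$ is a monomorphism.

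Finally, one must prove $\iota_I$ is a closed immersion. Since $\mathcal L_I$ is very ample and $\mathfrak M_{\theta_I}(1,v)$ is projective over the affine quotient $\mathfrak M_0(1,v)$, this reduces to showing $\iota_I$ is injective on points and on tangent spaces, and injectivity on points is where the real content lies. A $\theta_I$-polystable $\Pi$-module $M$ of dimension vector $(1,v)$ is a direct sum of $\theta_I$-stable modules, exactly one indecomposable summand $M_\infty$ of which has $\dim_\infty M_\infty=1$, the others being vertex simples $S_i$ with $i\notin I$; hence $\dim_i M_\infty=v_i=n_i$ for $i\in I$, and $\iota_I([M])=[\overline{e}_IM_\infty]$ — this is how $\tau_I$ unwinds, using exactness of $\overline{e}_I(-)$ (valid since $\PiQ\overline{e}_I$ is a projective $\PiQ$-module) applied to a $\theta_I$-Jordan--H\"older filtration of a $\widetilde{\theta}$-stable $V$ with $\pi_I([V])=[M]$, whose only factor not killed by $\overline{e}_I$ is $M_\infty$. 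The plan is then to recover $M_\infty$ from $\overline{e}_IM_\infty$ up to isomorphism: $\theta_I$-stability forces $M_\infty=\Pi\overline{e}_IM_\infty$ (otherwise the quotient is supported off $\{\infty\}\cup I$ and so has $\theta_I$-weight $0$, contradicting stability), so $M_\infty$ is a $\theta_I$-stable quotient of $\PiQ\overline{e}_I\otimes_{\PiQ_I}\overline{e}_IM_\infty$ of dimension vector $(1,n_I)$; and any two such quotients are isomorphic, since the image of the diagonal into their direct sum is a submodule of dimension $1$ at $\infty$ and $n_i$ at each $i\in I$, hence of $\theta_I$-weight $0$, which therefore projects isomorphically onto each factor by $\theta_I$-stability. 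Injectivity on tangent spaces should follow by the analogous first-order argument, again because infinitesimal deformations of $M_\infty$ are governed by those of $\overline{e}_IM_\infty$. I expect this reconstruction — that cornering by $\overline{e}_I$ loses no information on exactly the $\theta_I$-stable modules that occur, $\theta_I$-stability being what forbids extra support — to be the main obstacle of the proof.
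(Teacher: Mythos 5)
Your construction of $\tau_I$, via Proposition~\ref{prop:hilbiso}, the Morita equivalence and cornering, is essentially what the paper does (it cites the proofs of Lemmas~3.1 and~4.1 of~\cite{craw2019punctual}). You also reach the same intermediate picture as the paper: a commutative square involving $\varphi_{|L_I|}$, $\pi_I$, $\tau_I$ and the two closed immersions $j$ (the paper's $\sigma_I$) and $\varphi_{|\mathcal L_I|}$ into a common projective space. The divergence is in the last step, and it is more than cosmetic.

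Once you have constructed $g$ and shown that $g$ factors through $\varphi_{|\mathcal L_I|}(\mathcal M_{\PiQ_I}(1,n_I))$, you should \emph{stop}: $\iota_I$ is automatically a closed immersion. Your $g$ is, up to an automorphism of the ambient projective space, the closed immersion $j$ given by the very ample bundle $L'_I$ on $\mathfrak M_{\theta_I}(1,v)$ from Lemma~\ref{lem:factorimm}, and if $\varphi_{|\mathcal L_I|}\circ\iota_I$ is a closed immersion while $\varphi_{|\mathcal L_I|}$ is a monomorphism (indeed a closed immersion), then $\iota_I$ is a closed immersion. This is exactly what the paper does: it observes $\mathrm{Im}(\sigma_I)=\mathrm{Im}(\sigma_I\circ\pi_I)=\mathrm{Im}(\varphi_I\circ\tau_I)\subseteq\mathrm{Im}(\varphi_I)$, and then $\iota_I\coloneqq\lambda_I\circ\sigma_I$, with $\lambda_I$ the inverse of $\varphi_I$ onto its image, is a closed immersion for free. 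Your entire final paragraph --- the reconstruction of $M_\infty$ from $\overline e_I M_\infty$ via $\theta_I$-stability, the diagonal argument, the appeal to injectivity on tangent spaces --- is therefore superfluous for the purposes of this Proposition, and it also does not amount to a complete proof of the closed-immersion property even on its own terms (injectivity on points and tangent vectors, together with properness, is the right checklist, but you leave the tangent-space step as a wave of the hand). The module-theoretic insight you develop there is genuinely close to what the paper uses later, in Lemma~\ref{lem:existsemistable}, to prove surjectivity of $\tau_I$ for a well-chosen dimension vector; it is simply misplaced here.

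One smaller issue in your construction of $g$: you justify the factoring of $\varphi_{|\mathcal L_I|}\circ\tau_I$ through $\pi_I$ by appealing to surjectivity of $\pi_I$ and everywhere-definedness. Neither of these by itself forces a morphism to be constant on fibres. The correct reasoning, which the paper spells out both in Lemma~\ref{lem:factorimm}(2) and in the proof of Proposition~\ref{prop:key}, is that $\varphi_{|\mathcal L_I|}\circ\tau_I$ and the morphism to the complete linear series $|L_I|$ both pull back $\mathcal O(1)$ to $L_I$ on $\mathfrak M_{\widetilde\theta}(1,\widetilde v)$, so by~\cite[Chapter~II, Theorem~7.1]{hartshorne1977algebraic} they agree after a change of basis, and then the factoring through $\pi_I$ comes from Lemma~\ref{lem:factorimm}(2) directly. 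Phrasing it your way obscures where the factoring actually comes from.
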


 \begin{proof}
The fact that $\widetilde{v}_i=n_i$ for all $i\in I$ is immediate from Proposition~\ref{prop:surjvar}. We now construct a commutative diagram
\begin{equation*}
 \begin{tikzcd}
 & \mathfrak{M}_{\widetilde{\theta}}(1,\widetilde{v}) \ar[dl,swap,"{\pi_I}"] \ar[dr,"{\tau_I}"]\ar[dd,"\varphi_{\vert L_I\vert}"] & \\ \mathfrak{M}_{\theta_I}(1,v)\ar[rd,hook,"\sigma_I"] & & \mathcal{M}_{\PiQ_I}(1,n_I)\ar[dl,hook,"\varphi_I"] \\
 & \vert L_I\vert &
 \end{tikzcd}
 \end{equation*}
 of schemes over $\C$, where both morphisms drawn diagonally in the bottom half of the diagram are closed immersions.
 Commutativity of the triangle on the left is given by Lemma~\ref{lem:factorimm}, while surjectivity of $\pi_I$
 is established in the proof of Lemma~\ref{lem:vwtildev}.

The isomorphism $\Hilb^{\widetilde{v}}\big(\big[\SC^2/\Gamma\big]\big) \cong \mathfrak{M}_{\theta}(1,\widetilde{v})$ from Proposition~\ref{prop:hilbiso} allows us to apply the proof of \cite[Lemma~3.1]{craw2019punctual}, using $\widetilde{v}$ in place of $n\delta$ throughout, to conclude that
$\mathfrak{M}_{\tilde{\theta}}(1,\widetilde{v})$ may be regarded as a fine moduli space of $A$-modules. Applying the proof of \cite[Lemma 4.1]{craw2019punctual}, with $\widetilde{v}_i$ replacing $n \mathrm{dim}\,\rho_i$ for all $i\in I$, gives the universal morphism $\tau_I$.

For commutativity of the right-hand triangle, recall that the polarising line bundle $\mathcal{L}_I$ on $\mathcal{M}_{\PiQ_I}(1,n_I)$ is very ample. As pullback commutes with tensor operations on the $T_i$, the isomorphisms $\tau_I^*(T_i) \cong \mathcal{R}_i$ for $i\in \{\infty\}\cup I$ from
{\emph{ibid.}}\ imply that $L_I = \tau_I^*(\mathcal{L}_I)$. If $\varphi_{\vert \mathcal{L}_I\vert}\colon \mathcal{M}_{\PiQ_I}(1,n_I)\to \vert \mathcal{L}_I\vert$ is the closed immersion for $\mathcal{L}_I$, then
 \begin{equation*}
 (\varphi_{\vert \mathcal{L}_I\vert}\circ \tau_I)^*(\mathcal{O}_{\vert \mathcal{L}_I\vert}(1)) = \tau_I^*(\mathcal{L}_I) = L_I = \varphi^*_{\vert L_I \vert}\big(\mathcal{O}_{\vert L_I\vert}(1)\big)
 \end{equation*}
 on $\mathfrak{M}_\theta(1,v)$. Now, just as in the proof of Lemma~\ref{lem:factorimm}, we deduce $\varphi_{\vert L_I \vert} = \varphi_{\vert \mathcal{L}_I\vert}\circ \tau_I$, so the right-hand triangle commutes and hence so does the diagram.

 It remains to construct the closed immersion $\iota_I$. Commutativity of the diagram, combined with surjectivity of $\pi_I$, shows that $\mathfrak{M}_{\theta_I}(1,v)$ is isomorphic to the closed subscheme $\mathrm{Im}(\sigma_I)= \mathrm{Im}(\sigma_I\circ \pi_I) = \mathrm{Im}(\varphi_{I}\circ \tau_I)$ of $\vert L_I\vert$. The closed immersion $\varphi_I$ induces an isomorphism $\lambda_I\colon \mathrm{Im}(\varphi_I) \to \mathcal{M}_{A_I}(1,n_I)$ and hence we obtain a closed immersion
 \[
 \iota_I\coloneqq \lambda_I\circ \sigma_I\colon \mathfrak{M}_{\theta_I}(1,v)\to \mathcal{M}_{\PiQ_I}(1,n_I).
 \]
 Since $\mathfrak{M}_{\theta_I}(1,v)$ is non-empty by assumption, it follows that $\mathcal{M}_{\PiQ_I}(1,n_I)$ is non-empty.
 \end{proof}

\subsection{Selecting a suitable dimension vector}
We don't yet know whether the morphism $\tau_I$ from the key commutative diagram from Proposition~\ref{prop:key} is surjective. We now introduce a collection of dimension vectors that will allow us to establish surjectivity.

For any vector $v\in \mathbb{N}^{r+1}$, it is convenient to write $v_\infty \coloneqq 1$, so that the dimension vector $(1,v)$ has components $v_i$ for all $i\in Q_0$.

 \begin{Lemma}
 \label{lem:Vnj}
 Let $v=(v_j)_{0 \leq j \leq r} \in \mathbb{N}^{r+1}$ satisfy $v_i=n_i$ for all $i \in I$. Let $ V $ be a $ \theta_I $-stable $A$-module
 of dimension vector $(1,v)$. Then $2v_k \leq \sum_{\{e \in Q_1\mid \tail(e)=k\}} v_{\head(e)}$ for all $k \notin \{\infty\}\cup I$.
 \end{Lemma}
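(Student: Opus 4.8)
The plan is to deduce the inequality from two $\operatorname{Hom}$-vanishings forced by $\theta_I$-stability, combined with the single preprojective relation at the vertex $k$. Write $S_k$ for the vertex-simple module at $k$; since $k\notin\{\infty\}\cup I$ we have $\theta_I(S_k)=\theta_I(\rho_k)=0$. First I would record that, because $V$ is $\theta_I$-stable of dimension vector $(1,v)$ with $v_\infty=1$, certainly $S_k\not\cong V$, and hence $\operatorname{Hom}_A(S_k,V)=0$ and $\operatorname{Hom}_A(V,S_k)=0$. Indeed, a nonzero map $S_k\to V$ is injective and exhibits $S_k$ as a nonzero proper submodule with $\theta_I(S_k)=0$, contradicting strict stability; dually, a nonzero map $V\to S_k$ is surjective with kernel a proper submodule $N$, necessarily nonzero since $\dim_\infty V=1\neq 0=\dim_\infty S_k$, and $\theta_I(N)=\theta_I(V)-\theta_I(S_k)=0$, again a contradiction.

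Next I would translate these two vanishings into linear algebra at the vertex $k$. Let $a\colon V_k\to\bigoplus_{e\in Q_1,\,t(e)=k}V_{h(e)}$ be the map assembled from the arrows out of $k$, and let $b\colon\bigoplus_{e\in Q_1,\,h(e)=k}V_{t(e)}\to V_k$ be assembled, with the preprojective signs $\epsilon$, from the arrows into $k$; under $e\leftrightarrow e^\ast$ the target of $a$ is canonically the source of $b$, of dimension $N_k:=\sum_{t(e)=k}v_{h(e)}=\sum_{h(e)=k}v_{t(e)}$. Since there is no loop at $k$, $\operatorname{Hom}_A(S_k,V)=0$ says exactly that $a$ is injective, and $\operatorname{Hom}_A(V,S_k)=0$ says exactly that $b$ is surjective. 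The preprojective relation $\sum_{h(c)=k}\epsilon(c)\,cc^\ast=0$, evaluated on $V$, is precisely the identity $b\circ a=0\colon V_k\to V_k$. Therefore $\operatorname{im}(a)\subseteq\ker(b)$, so $v_k\le\dim\ker(b)=N_k-v_k$, i.e.\ $2v_k\le N_k$, as claimed.

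The one case needing care is $k=0$ when $0\notin I$: here the arrow $b^\ast\colon 0\to\infty$ lies in $Q_1$ but acts as zero on the $A$-module $V$ (recall $A=\Pi/(b^\ast)$). Then $\operatorname{Hom}_A(S_0,V)=0$ still forces injectivity of the map $a_0$ built from the McKay arrows out of $0$, and the relation at $0$ loses its $bb^\ast$ summand, so it reads $b_0\circ a_0=0$ with $b_0$ built from the McKay arrows into $0$. What changes is that $\operatorname{Hom}_A(V,S_0)=0$ now only gives $\operatorname{im}(b_0)+\operatorname{im}(b\colon V_\infty\to V_0)=V_0$, hence $\operatorname{rank}(b_0)\ge v_0-1$ since $\dim V_\infty=1$; combined with $\operatorname{im}(a_0)\subseteq\ker(b_0)$ this yields $v_0\le\dim\ker(b_0)\le(N_0-1)-(v_0-1)=N_0-v_0$, using that the sum $N_0=\sum_{t(e)=0}v_{h(e)}$ includes the contribution $v_\infty=1$ from $b^\ast$. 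I expect this bookkeeping at the framing-adjacent vertex to be the only subtle point; the rest is the standard fact that a stable module has no vertex-simple sub- or quotient module, followed by a rank count.

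As an alternative that handles all vertices uniformly, one can feed the same two $\operatorname{Hom}$-vanishings into Crawley-Boevey's identity $\dim\operatorname{Hom}_\Pi(S_k,V)-\dim\operatorname{Ext}^1_\Pi(S_k,V)+\dim\operatorname{Hom}_\Pi(V,S_k)=(\underline{\dim}\,S_k,\underline{\dim}\,V)$ for the preprojective algebra $\Pi$ of the (non-Dynkin) framed McKay quiver, noting that $\operatorname{Hom}_\Pi=\operatorname{Hom}_A$ on $A$-modules. This gives $(\underline{\dim}\,S_k,\underline{\dim}\,V)=-\dim\operatorname{Ext}^1_\Pi(S_k,V)\le 0$, and since $(\underline{\dim}\,S_k,\underline{\dim}\,V)=2v_k-\sum_{t(e)=k}v_{h(e)}$ by definition of the symmetrised Euler form of the framed McKay graph, this is again the inequality, with the vertex $0$ absorbed automatically by the edge joining $0$ and $\infty$.
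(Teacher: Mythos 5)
Your main argument is correct and coincides with the paper's: the paper forms the complex $0\to V_k\xrightarrow{f}V_\oplus\xrightarrow{g}V_k\to 0$ from the arrows at $k$ and cites \cite[Proposition~A.2]{craw2019punctual} for exactness at the two ends, which is precisely the injectivity of your $a$ and surjectivity of your assembled incoming map, derived from $\Hom_A(S_k,V)=0=\Hom_A(V,S_k)$ by $\theta_I$-stability. Two small remarks: your special treatment of $k=0\notin I$ is careful but not actually needed, since your uniform argument already handles it (the $b^*$-component of $a$ is zero on an $A$-module, $V_\infty$ is included in $V_\oplus$ on both sides, and surjectivity of $g$ and the relation $g\circ f=0$ hold verbatim); and your Crawley--Boevey aside is a genuinely cleaner alternative, packaging both $\Hom$-vanishings and the preprojective relation into the single identity $(\underline{\dim}\,S_k,\underline{\dim}\,V)=-\dim\operatorname{Ext}^1_\Pi(S_k,V)\le 0$, at the cost of invoking the Euler-form identity for $\Pi$ rather than just linear algebra.
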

 \begin{proof}
 Fix $k \notin \{\infty\}\cup I$ and define
 \[
 	V_\oplus\coloneqq\bigoplus_{\substack{e\in {Q_1},\\ \; \tail(e)=k }} V_{\head(e)}.
 	\]
 	The maps in $V$ determined by arrows with head and tail at vertex $k$ combine to define maps $f\colon V_k\to V_\oplus$ and $g\colon V_\oplus\to V_k$ satisfying $g\circ f = 0$.
 	The same proof as in \cite[Proposition A.2]{craw2019punctual} implies that the complex
 	\[
 	0 \longrightarrow V_k\stackrel{f}{\longrightarrow} V_\oplus \stackrel{g}{\longrightarrow} V_k \longrightarrow 0
 	\]
 	has nonzero homology only at $V_\oplus$, so $\dim V_\oplus \geq 2\dim V_k$. This proves the claim.
 \end{proof}

For our chosen vector $n_I=(n_i)_{i \in I} \in \mathbb{N}^{I}$, define
 \[
\mathcal{V}(n_I) \coloneqq\left\{(1,v)\in \mathbb{N}^{r+2} \mid v_i = n_i \text{ for } i\in I \text{ and }2v_k \geq \sum_{\{e \in Q_1\mid \tail(e)=k\}} v_{\head(e)} \text{ for }k\not\in \{\infty\}\cup I \right\}
\]
 (recall that $v_\infty=1$). Before we prove that this set is non-empty, recall from McKay~\cite{McKay80} that for each vertex $0\leq k\leq r$ in the McKay quiver, we have
\begin{equation}
 \label{eqn:McKayequality}
2\dim(\rho_k) = \sum_{\{e\in Q_1^* \mid \tail(e)=k\}} \dim(\rho_{\head(e)}).
\end{equation}
Recall also that $Q_1=\{b^*\}\cup Q_1^*$, so the indexing set in the sum from \eqref{eqn:McKayequality} differs from the set $\{e \in Q_1\mid \tail(e)=0\}$ only for $k=0$.

\begin{Example}
\label{ex:ndeltaVnI}
If there exists $n>0$ such that $n_i = n\dim \rho_i$ for all $i\in I$, then equation \eqref{eqn:McKayequality} shows that the vector $(1,v)$ with $v_k =n\dim(\rho_k)$ for all $0\leq k\leq r$ lies in $\mathcal{V}(n_I)$. This class of dimension vectors appears in \cite{craw2019punctual}.
\end{Example}

\begin{Lemma}
\label{lem:VnInon-empty}
For any $v\in \mathbb{N}^{r+1}$ satisfying $v_i=n_i$ for all $i\in I$, there exists $(1,\vprime)\in \mathcal{V}(n_I)$ such that $\vprime-v\geq 0$.
\end{Lemma}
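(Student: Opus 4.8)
The plan is to construct the required vector $(1,\vprime)$ explicitly by starting from $v$ and inflating the coordinates at vertices outside $\{\infty\}\cup I$ until the defining inequalities of $\mathcal{V}(n_I)$ are satisfied, while never decreasing any coordinate. The key structural input is that the McKay graph is an affine ADE diagram, so after deleting the vertices in $\{\infty\}\cup I$ (note $I\neq\varnothing$, so at least vertex $0$ or some other genuine vertex is removed, hence also $\infty$ is effectively irrelevant here since the inequalities only constrain $k\notin\{\infty\}\cup I$), each connected component of the remaining graph is a union of finite-type ADE diagrams. This is exactly the observation already exploited in the proof of Proposition~\ref{prop:surjvar}.

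First I would fix the coordinates $v_i=n_i$ for $i\in I$ and $v_\infty=1$, and treat the inequality $2\vprime_k \geq \sum_{\{e\in Q_1\mid \tail(e)=k\}} \vprime_{\head(e)}$ for $k\notin\{\infty\}\cup I$ as a system to be solved with $\vprime\geq v$. The natural approach is a fixed-point / monotone iteration: define an operator that, given a candidate $w\geq v$, replaces each $w_k$ (for $k\notin\{\infty\}\cup I$) by $\max\bigl(w_k, \lceil \tfrac12\sum_{\tail(e)=k} w_{\head(e)}\rceil\bigr)$ and leaves the other coordinates fixed. This operator is monotone and increases coordinates, so if it terminates it produces a vector in $\mathcal{V}(n_I)$ dominating $v$. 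The entire content of the lemma is therefore the claim that this iteration terminates (equivalently, that some finite vector satisfying the inequalities exists at all).

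The main obstacle is precisely this boundedness/termination statement, and here is where the ADE structure enters decisively. Deleting $\{\infty\}\cup I$ from the framed McKay graph leaves a disjoint union of finite-type ADE Dynkin diagrams (possibly with some half-edges dangling towards the deleted vertices, contributing fixed "source terms" from the already-frozen coordinates $v_i$, $i\in I$, and $v_\infty=1$). On a finite-type ADE diagram the matrix $2\,\mathrm{Id} - A$, where $A$ is the adjacency matrix, is positive definite, so the associated "relaxation" system $2w_k \geq \sum_{\text{neighbours}} w_j + (\text{nonnegative source})$ has solutions: concretely, $(2\,\mathrm{Id}-A)^{-1}$ has nonnegative entries on a finite-type ADE diagram, so one can solve $(2\,\mathrm{Id}-A)w = c$ for any nonnegative source vector $c$ (incorporating the frozen boundary contributions) and obtain a nonnegative rational $w$; rounding up and then running one more pass of the monotone operator absorbs the rounding error. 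Taking $\vprime_k$ on the deleted vertices equal to $v_k$ and on the remaining vertices equal to the maximum of $v_k$ and this solution gives $(1,\vprime)\in\mathcal{V}(n_I)$ with $\vprime\geq v$. I would carry out the argument componentwise over the connected components of the deleted graph, invoking on each the standard fact (e.g.\ from the theory of the Cartan matrix, or directly from the Perron--Frobenius/positive-definiteness of the finite-type Cartan matrix) that $2\,\mathrm{Id}-A$ is an invertible $M$-matrix with nonnegative inverse. The only subtlety to check carefully is that the source terms coming from edges between a deleted vertex and a retained vertex are indeed nonnegative and finite, which is clear since they are sums of the fixed values $n_i$ and $1$.
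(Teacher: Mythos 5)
Your approach is genuinely different from the paper's. Writing $K\coloneqq\{0,\dots,r\}\setminus I$, the paper takes $\vprime_k=N\dim\rho_k$ on $K$ for $N\gg 0$ and subtracts a small correction (the graph distance from $0$) along a shortest path in $K$ from vertex~$0$ to~$I$; the McKay equality~\eqref{eqn:McKayequality} then verifies the defining inequalities of $\mathcal{V}(n_I)$ vertex by vertex, with the path correction compensating for the extra arrow between $0$ and $\infty$. You instead exploit the $M$-matrix structure of the finite-type Cartan matrix $C\coloneqq 2\,\mathrm{Id}-A$ of the subgraph induced on $K$. This is a legitimate tool (the paper itself uses nonnegativity of $C^{-1}$ for finite-type subdiagrams in the proof of Lemma~\ref{lem:existsemistable}), and a corrected version of your argument would give an alternate proof avoiding the explicit path construction.

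However, the passage from ``solve $Cw=c$'' to the final vector has a genuine gap. Because $C$ has negative off-diagonal entries, componentwise maximisation with $v|_K$ does \emph{not} preserve the inequality $Cu\geq c$: increasing $u_j$ for a neighbour $j$ of $k$ strictly decreases $(Cu)_k$, so taking $\max(v_k,w_k)$ can break the constraint at $k$. The same objection applies to rounding $w$ up to integers, and one extra pass of your monotone operator does not fix it, since the inflation at one vertex can re-violate the inequality at a neighbour; indeed, termination of that iteration is precisely what must be established. To close the gap along your route, use that on each connected component of $K$ the inverse Cartan matrix has \emph{strictly} positive (not merely nonnegative) entries; then $w\coloneqq C^{-1}(c+\mathbf 1)$ is strictly positive and satisfies $Cw=c+\mathbf 1>c$, so a sufficiently large integer multiple of a common-denominator multiple of $w$ gives an integer vector that satisfies $Cu\geq c$ and dominates $v|_K$. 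Your recipe as written does not produce a vector in $\mathcal{V}(n_I)$.
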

\begin{proof}
Write $K\coloneqq \{0,1,\dots,r\}\setminus I$. Set $\vprime_\infty = 1$ and $\vprime_i=n_i$ for $i\in I$. Our goal is to define $\vprime_k\in \mathbb{N}$ for each $k\in K$ such that $\vprime_k\geq v_k$ and
the inequality
 \begin{equation}
 \label{eqn:VnIinequality}
 2\vprime_k \geq \sum_{\{e \in Q_1\mid \tail(e)=k\}} \vprime_{\head(e)}
 \end{equation}
 holds for all $k\in K$.

 For this, define a subset $K^\prime\subseteq K$ as follows. If $0\in I$, then set $K^\prime=\varnothing$. Otherwise, we have $0\in K$. In this case, choose any shortest path $\gamma$ in the McKay graph starting at vertex~$0$ and ending at some vertex $i\in I$. The set $K^\prime$ of vertices through which $\gamma$ passes satisfies $K^\prime\subseteq K\setminus \{0\}$. Now,
 fix $N\gg 0$ and for each $k\in K^\prime$, let $d_k$ denote the distance in the graph from~$0$ to~$k$. For $k\in K$, define{\samepage
 \[
 \vprime_k= \begin{cases} N\dim(\rho_k) - d_k & \text{if }k\in K^\prime, \\
 N\dim(\rho_k) & \text{otherwise.}
 \end{cases}
 \]
 Since $N\gg 0$, we have $\vprime_k\geq v_k$ for all $k\in K$. Also, for all $k\in K$ and $i\in I$, we have $\vprime_k\gg \vprime_i$.}

 It remains to establish the inequality \eqref{eqn:VnIinequality} for all $k\in K$. To do this, consider three cases, depending on whether $k\in K'$ and whether $k=0$.

 First, suppose $k\in K^\prime$. If there is a vertex $j\in I$ that lies adjacent to $k$, then \eqref{eqn:VnIinequality} follows by combining \eqref{eqn:McKayequality} with the inequality $\vprime_k\gg \vprime_j$. Otherwise, precisely two vertices adjacent to $k$ lie in $\{0\}\cup K^\prime$, while every other vertex adjacent to $k$ lies in $K$. Then
\begin{align*}
2\vprime_k
= 2(N\dim(\rho_k) - d_k) & = -(d_{k}-1)-(d_{k}+1) +\sum_{\{e\in Q_1^*\mid \tail(e)=k\}}N \dim(\rho_{\head(e)})\\
& = \sum_{\{e\in Q_1^*\mid \tail(e)=k\}} \vprime_{\head(e)},
\end{align*}
 so \eqref{eqn:VnIinequality} holds because $k\neq 0$.

 Next, suppose $k\in K\setminus K'$ with $k\neq 0$. Then each vertex $j$ adjacent to $k$ lies in $\{0,1,\dots,r\}$ and satisfies $N\dim(\rho_j)\geq v_j'$. Combine these inequalities with \eqref{eqn:McKayequality} to obtain
 \[
 2\vprime_k = 2N\dim(\rho_k) = \sum_{\{e\in Q_1^* \mid \tail(e)=k\}} N\dim(\rho_{\head(e)})
\geq \sum_{\{e\in Q_1^* \mid \tail(e)=k\}} \vprime_{\head(e)}.
\]
This establishes the inequality \eqref{eqn:VnIinequality} since $k\neq 0$.

 Finally, the only remaining case is when $k=0\in K$. If there is a vertex $j\in I$ that lies adjacent to $0$, then \eqref{eqn:VnIinequality} follows by combining \eqref{eqn:McKayequality} with the inequality $\vprime_0\gg \vprime_j$ and $\vprime_\infty = 1$. Otherwise, we have $0\in K$ and one vertex of $K^\prime$ lies adjacent to $0$, giving
 \[
 2\vprime_0 = 2N = \vprime_
\infty + (-1) + \sum_{\{e\in Q_1^* \mid \tail(e)=0\}} N \dim(\rho_{\head(e)}) = \vprime_\infty + \sum_{\{e\in Q_1^*\mid \tail(e)=0\}} \vprime_{\head(e)}.
 \]
 The quiver $Q_1$ has a unique arrow with tail at $0$ and head at $\infty$, so \eqref{eqn:VnIinequality} holds for $k=0$ if $0\in K$.
 This completes the proof.
 \end{proof}

 \begin{Corollary}
 \label{cor:non-empty}
 Let $v\in \mathbb{N}^{r+1}$ satisfy $v_i=n_i$ for all $i\in I$.
 If $\mathfrak{M}_{\theta_I}(1,v)$ is non-empty, then there exists $(1,\vprime)\in \mathcal{V}(n_I)$ such that $\vprime-v \geq 0$ and $\mathfrak{M}_{\theta_I}(1,\vprime)$ is non-empty.
 \end{Corollary}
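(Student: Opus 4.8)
The plan is to feed the given vector $v$ into the combinatorial Lemma~\ref{lem:VnInon-empty} to produce a candidate $\vprime$, and then to upgrade the non-emptiness of $\mathfrak{M}_{\theta_I}(1,v)$ to non-emptiness of $\mathfrak{M}_{\theta_I}(1,\vprime)$ by adjoining vertex simple $\Pi$-modules. The point that makes this work is that $\vprime-v$ is supported away from $\{\infty\}\cup I$, so the simples one adds all have $\theta_I(-)=0$ and therefore cannot disturb $\theta_I$-(poly)stability.

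In detail, I would first apply Lemma~\ref{lem:VnInon-empty} to the given $v$ to obtain $(1,\vprime)\in \mathcal{V}(n_I)$ with $\vprime-v\geq 0$. Since $\vprime_i=n_i=v_i$ for every $i\in I$, the nonnegative vector $\vprime-v\in\mathbb{N}^{r+1}$ is supported on $K\coloneqq\{0,\dots,r\}\setminus I$, so $\vprime = v+\sum_{k\in K}m_k\rho_k$ with $m_k\coloneqq\vprime_k-v_k\geq 0$. Next, because $\mathfrak{M}_{\theta_I}(1,v)$ is non-empty, I would pick a $\theta_I$-polystable $\Pi$-module $M$ of dimension vector $(1,v)$. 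For $k\in K$ let $S_k$ denote the vertex simple $\Pi$-module at $k$; being simple, $S_k$ is $\theta_I$-stable precisely when $\theta_I(S_k)=0$, and indeed $\theta_I(S_k)=\theta_I(\rho_k)=0$ for $k\in K$ directly from~\eqref{eqn:theta_I}. Hence $M'\coloneqq M\oplus\bigoplus_{k\in K}S_k^{\oplus m_k}$ is a direct sum of $\theta_I$-stable modules of total weight $0$, so it is $\theta_I$-polystable, and its dimension vector is $(1,\vprime)$. As $\mathfrak{M}_{\theta_I}(1,\vprime)$ is the coarse moduli space of such polystable modules, the existence of $M'$ shows it is non-empty; together with $(1,\vprime)\in\mathcal{V}(n_I)$ and $\vprime-v\geq 0$ this is exactly the assertion.

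I do not expect a genuine obstacle here: once Lemma~\ref{lem:VnInon-empty} is in hand, the rest is bookkeeping. The two points that need care are that $\vprime-v$ avoids the vertices $\infty$ and those of $I$ — which is forced by the defining conditions of $\mathcal{V}(n_I)$ together with the hypothesis $v_i=n_i$ — and that a simple module is automatically $\theta$-stable as soon as its $\theta$-value vanishes, so that adjoining copies of $S_k$ for $k\in K$ leaves us inside the $\theta_I$-semistable locus.
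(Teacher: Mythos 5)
Your proof is correct and takes essentially the same approach as the paper: apply Lemma~\ref{lem:VnInon-empty} to $v$, then adjoin vertex simples $S_k$ for $k\in K$ (which carry $\theta_I$-weight zero) to a (poly)stable representative to produce a $\theta_I$-(semi)stable module of dimension vector $(1,\vprime)$. The paper phrases it with a $\theta_I$-semistable $M$ and concludes $\overline{M}$ is semistable, while you phrase it with a polystable $M$; these are interchangeable and the argument is the same.
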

 \begin{proof}
Lemma~\ref{lem:VnInon-empty} determines a vector $(1,\vprime)\in \mathcal{V}(n_I)$ such that
 $\vprime_k-v_k\geq 0$ for all $k\in K\coloneqq\{0,1,\dots,r\}\setminus I$. Take any $\theta_I$-semistable $A$-module $M$ of dimension vector $(1,v)$, and let $S_k\coloneqq\SC e_k$ denote the vertex simple $A$-module at vertex $k\in K$.
 The $\PiQ$-module
 \[
 \overline{M}\coloneqq M\oplus \bigoplus_{k\in K}
 S_k^{\oplus (\vprime_k-v_k)}
 \]
 is $\theta_I$-semistable of dimension vector $(1,\vprime)$ by construction.
 \end{proof}

\subsection{Quiver varieties as fine moduli spaces}
As before, $n_I=(n_i)\in \mathbb{N}^I$ is a dimension vector and $\eta_I\colon \mathbb{Z}\oplus \mathbb{Z}^I\to \mathbb{Q}$ is the stability condition for $A_I$-modules of dimension vector $(1,n_I)$ defined in \eqref{eqn:etaI}. Let $v=(v_j)_{0\leq j\leq r}\in \mathbb{N}^{r+1}$ be a~dimension vector satisfying $v_i=n_i$ for all $i\in I$, and let $\theta_I\in \Theta_v$ be the stability condition for $A$-modules defined in~\eqref{eqn:theta_I}.

We now use Corollary~\ref{cor:non-empty} to strengthen the statement of Proposition~\ref{prop:key}. Before stating the desired result, recall the idempotent $\overline{e}_I=e_\infty+\sum_{i\in I} e_i\in A$ and the algebra $A_I=\overline{e}_I A\overline{e}_I$. Consider the functors
 \[
\begin{tikzpicture}
\node (v1) at (0,0) {\scriptsize ${{\PiQ}\module}$};
\node (v2) at (4,0) {\scriptsize ${{\PiQ_I}\module}$};

\draw [->,bend right=-5] (v1) to node[above] {$\scriptstyle{j_I^*}$} (v2);
\draw [->,bend left=5] (v2) to node[below] {$\scriptstyle{j_{I!}}$} (v1);
\end{tikzpicture}
\]
 defined by $j_I^*(-)\coloneqq \overline{e}_I \PiQ \otimes_{\PiQ} (-)$ and $j_{I!}(-)\coloneqq \PiQ \overline{e}_I\otimes_{\PiQ_I}(-)$. These are two of the six functors in a recollement of the module category ${\PiQ}\module$ \cite[Section 3]{craw2018multigraded}. In particular, $j_I^*$ is exact, $j_I^*j_{I!}$ is the identity functor, and for any $\PiQ_I$-module $N$, the $\PiQ$-module $j_{I!}(N)$ provides the maximal extension by $\PiQ/(\PiQ \overline{e}_I\PiQ)$-modules.

\begin{Lemma}\label{lem:j!semistable}
Let $N$ be an $\eta_I$-stable $\PiQ_I$-module of dimension vector $(1,n_I)$. Then $j_{I!}(N)$ is a~$\theta_I$-semistable $\PiQ$-module of finite dimension satisfying $\dim_i j_{I!}(N)=\dim_i N$ for $i \in \{\infty\} \cup I$.
\end{Lemma}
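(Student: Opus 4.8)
The plan is to analyze the $\PiQ$-module $j_{I!}(N)$ directly using the two key properties of the functor $j_{I!}$ recalled just above: that $j_I^* j_{I!}$ is the identity, and that $j_{I!}(N)$ is the maximal extension of $N$ by modules supported away from $\{\infty\}\cup I$, i.e.\ by $\PiQ/(\PiQ\overline{e}_I\PiQ)$-modules. First I would establish finite-dimensionality. Since $N$ is finite-dimensional and the quotient algebra $\PiQ/(\PiQ\overline{e}_I\PiQ)$ is the preprojective algebra of the doubled quiver obtained by deleting the vertices $\{\infty\}\cup I$ from the framed McKay quiver, the nonempty components of the underlying graph are all of finite ADE type; the preprojective algebra of a finite-type quiver is finite-dimensional, so the maximal such extension $j_{I!}(N)$ is finite-dimensional. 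The dimension count $\dim_i j_{I!}(N) = \dim_i N$ for $i\in\{\infty\}\cup I$ is then immediate from $j_I^* j_{I!}(N) \cong N$, since $j_I^* = \overline{e}_I\PiQ\otimes_{\PiQ}(-)$ simply extracts the components at $\{\infty\}\cup I$. In particular $\dim_\infty j_{I!}(N) = 1$.

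The substantive point is $\theta_I$-semistability. Recall $\theta_I$ assigns $1$ to each vertex in $I$, $0$ to each vertex in $\{0,\dots,r\}\setminus I$, and $-\sum_{i\in I} v_i = -\sum_{i\in I} n_i$ to $\infty$; note that on dimension vectors this depends only on the components at $\{\infty\}\cup I$, and there it agrees with the stability condition $\eta_I$ for $\PiQ_I$-modules. Since $\dim j_{I!}(N)$ has the same components as $\dim N$ at $\{\infty\}\cup I$, we get $\theta_I(j_{I!}(N)) = \eta_I(N) = 0$. Now let $W \subseteq j_{I!}(N)$ be a nonzero submodule; I must show $\theta_I(W)\geq 0$. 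Applying the exact functor $j_I^*$ gives a submodule $j_I^*(W) \subseteq j_I^*j_{I!}(N) \cong N$ of the $\eta_I$-stable module $N$. The value $\theta_I(W)$ depends only on the components of $W$ at $\{\infty\}\cup I$, which are exactly the components of $j_I^*(W)$, so $\theta_I(W) = \eta_I(j_I^*(W))$. If $j_I^*(W) = 0$ then $\theta_I(W) = 0 \geq 0$. If $j_I^*(W)$ is a proper nonzero submodule of $N$, then $\eta_I$-stability of $N$ gives $\eta_I(j_I^*(W)) > 0$. The remaining case is $j_I^*(W) = N$: here $\theta_I(W) = \eta_I(N) = 0 \geq 0$. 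In every case $\theta_I(W)\geq 0$, so $j_{I!}(N)$ is $\theta_I$-semistable.

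The main obstacle I anticipate is the finite-dimensionality claim, or more precisely making sure the formula $\theta_I(W) = \eta_I(j_I^*(W))$ is correctly justified — it hinges on $\theta_I$ being, by its very definition in \eqref{eqn:theta_I}, supported on the vertices $\{\infty\}\cup I$ together with the constraint that its value on any dimension vector is unchanged by altering the components outside $\{\infty\}\cup I$, and on the observation that the restriction of the $\PiQ_I$-module $j_I^*(W) = \overline{e}_I W$ has the same graded dimensions at those vertices as $W$ does. One should also double-check that $\eta_I$ as defined in \eqref{eqn:etaI} and $\theta_I$ as defined in \eqref{eqn:theta_I} genuinely agree on vectors supported at $\{\infty\}\cup I$ once we impose $v_i = n_i$ for $i\in I$: both send $\rho_i\mapsto 1$ for $i\in I$ and $\rho_\infty \mapsto -\sum_{i\in I} n_i$, so they do. With these bookkeeping points settled, the argument is a routine application of the recollement formalism.
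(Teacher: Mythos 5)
Your argument for $\theta_I$-semistability and for the dimension count at $\{\infty\}\cup I$ is correct and well organised: the identities $j_I^*j_{I!}\cong\mathrm{id}$ and $\dim_i j_I^*W=\dim_i W$ for $i\in\{\infty\}\cup I$, together with the observation that $\theta_I$ and $\eta_I$ agree on the coordinates where $\theta_I$ is nonzero, reduce semistability of $j_{I!}(N)$ to $\eta_I$-stability of $N$ exactly as you describe. This part is essentially the argument the paper delegates to \cite[Lemma~4.5]{craw2019punctual}.

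However, the finite-dimensionality argument has a genuine gap, and I would not accept it as written. The step ``$C:=\PiQ/(\PiQ\overline{e}_I\PiQ)$ is finite-dimensional, so the maximal such extension $j_{I!}(N)$ is finite-dimensional'' does not follow. The paper's phrase ``maximal extension by $C$-modules'' is an informal gloss on the recollement formalism, not a short exact sequence $0\to L\to j_{I!}(N)\to N\to 0$ with $L$ a $C$-module. Indeed, writing $K=\{0,\dots,r\}\setminus I$, the graded piece $e_K\,j_{I!}(N)$ is a module over $e_K\PiQ e_K$, \emph{not} over $C$: the two-sided ideal $e_K(\PiQ\overline{e}_I\PiQ)e_K$ (paths from $K$ to $K$ passing through $\{\infty\}\cup I$) acts nontrivially on $e_K\,j_{I!}(N)$ in general, since for $p\in e_K\PiQ\overline{e}_I$, $n\in N$ and $q$ such a path, $q\cdot(p\otimes n)=qp\otimes n$ need not vanish. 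The algebra $e_K\PiQ e_K$ is infinite-dimensional (e.g.\ $e_0\PiGamma e_0\cong R^\Gamma$ when $0\in K$), so finite-dimensionality of $C$ alone cannot control $e_K\,j_{I!}(N)$. What one actually needs is that $\PiQ\overline{e}_I\otimes_{\PiQ_I}N$ is finite-dimensional, and the clean way to see this is not via $C$ but via the concrete description $j_{I!}(N)=\PiQ\overline{e}_I\otimes_{\PiQ_I}N$ combined with finiteness properties of $\PiQ$ over the cornered algebra (for instance, through the Morita equivalence with the skew group algebra $S=R\ast\Gamma$ and the fact that $S$ is module-finite over $R^\Gamma$), or by the more hands-on argument in \cite[Lemma~4.4]{craw2019punctual}, which the paper cites directly instead of reproving. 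So the substance of your semistability argument is sound, but you need to replace the finite-dimensionality paragraph with an argument that actually bounds the $K$-components.
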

\begin{proof}The proofs of \cite[Lemma~4.4 and Lemma~4.5]{craw2019punctual} do not rely on the choice of dimension vector, so they apply equally well for the vector $(1,n_I)$.
\end{proof}

This gives the following partial converse to the Proposition~\ref{prop:key}.

 \begin{Corollary} Suppose that $\mathcal{M}_{A_I}(1,n_I)$ is non-empty. There exists $v\in \mathbb{N}^{r+1}$ satisfying $v_i=n_i$ for all $i \in I$ such that $\mathfrak{M}_{\theta_I}(1,v)$ is non-empty.
 \end{Corollary}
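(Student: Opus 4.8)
The plan is to go backwards through the construction of $\tau_I$ in Proposition~\ref{prop:key}: starting from a point of $\mathcal{M}_{A_I}(1,n_I)$, apply the extension functor $j_{I!}$ to produce a $\theta_I$-semistable $A$-module whose dimension vector, restricted to $I$, is exactly $n_I$, and then view it as a $\Pi$-module to land in a quiver variety.

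Concretely, first I would choose a $\SC$-point of the non-empty scheme $\mathcal{M}_{A_I}(1,n_I)$; by its modular description this is an $\eta_I$-stable $A_I$-module $N$ of dimension vector $(1,n_I)$. Applying $j_{I!}$ and invoking Lemma~\ref{lem:j!semistable}, the $A$-module $M\coloneqq j_{I!}(N)$ is finite-dimensional, is $\theta_I$-semistable, and satisfies $\dim_i M=\dim_i N$ for all $i\in\{\infty\}\cup I$. Setting $v_j\coloneqq\dim_j M$ for $0\le j\le r$ therefore produces a vector $v\in\mathbb{N}^{r+1}$ with $v_i=n_i$ for every $i\in I$, and the dimension vector of $M$ is $(1,v)$ (in particular $\dim_\infty M=\dim_\infty N=1$).

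It remains to observe that $M$ determines a point of the quiver variety $\mathfrak{M}_{\theta_I}(1,v)$, which parametrises $\theta_I$-semistable $\Pi$-modules. Since $A=\Pi/(b^*)$, the module $M$ is canonically a $\Pi$-module on which $b^*$ acts as zero; hence every $\Pi$-submodule of $M$ is automatically an $A$-submodule, and $\theta_I$-(semi)stability of $M$ computed in $A\module$ coincides with that computed in $\Pi\module$. Thus $M$ is a $\theta_I$-semistable $\Pi$-module of dimension vector $(1,v)$, so $\mathfrak{M}_{\theta_I}(1,v)\neq\varnothing$, as required.

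There is no genuine obstacle here: the substantive work is already contained in Lemma~\ref{lem:j!semistable} (equivalently \cite[Lemmas~4.4 and~4.5]{craw2019punctual}). The only points that need a word of care are that $j_{I!}$ will in general assign nonzero dimensions to the vertices $k\notin\{\infty\}\cup I$, which is harmless since the statement only asks for the existence of \emph{some} admissible $v$, and the routine identification of $\theta_I$-semistability for $A$-modules with that for $\Pi$-modules used in the last step.
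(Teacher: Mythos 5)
Your proposal is correct and follows essentially the same route as the paper: take an $\eta_I$-stable $A_I$-module $N$, apply $j_{I!}$, and invoke Lemma~\ref{lem:j!semistable} to produce a $\theta_I$-semistable $A$-module of the required dimension type. Your extra remark identifying $\theta_I$-semistability for $A$-modules with that for $\Pi$-modules (since $A=\Pi/(b^*)$, so $\Pi$-submodules and $A$-submodules of an $A$-module coincide) is a worthwhile clarification that the paper leaves implicit.
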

 \begin{proof}
 Let $N$ denote an $\eta_I$-stable $A_I$-module of dimension vector $(1,n_I)$. Lemma~\ref{lem:j!semistable} shows that $j_{I!}(N)$ is a $\theta_I$-semistable $A$-module of dimension vector $(1,v)$, where $v\in \mathbb{N}^{r+1}$ satisfies $v_i=n_i$ for all $i\in I$. In particular, $\mathfrak{M}_{\theta_I}(1,v)\neq \varnothing$.
 \end{proof}

 \begin{Lemma} \label{lem:existsemistable} Let $N$ be an $\eta_I$-stable $\PiQ_I$-module of dimension vector $(1,n_I)$ and let $(1,v) \in \mathcal{V}(n_I)$ be any vector such that $\mathfrak{M}_{\theta_I}(1,v)$ is non-empty. Choose
$\widetilde{v}$ as in Proposition~{\rm \ref{prop:surjvar}}. Then there exists a $\theta_I$-semistable $\PiQ$-module $M$ such that $j_I^*M \cong N$ with $\dim_{\infty} M =1$, $\dim_i M = n_i$ for all $i\in I$ and $\dim_k M \leq \widetilde{v}_k$ for all $k\in K\coloneqq\{0,1,\dots,r\}\setminus I$.
 \end{Lemma}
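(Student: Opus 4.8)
The plan is to produce $M$ as a quotient of $j_{I!}(N)$ by its largest submodule supported on $K:=\{0,1,\dots,r\}\setminus I$, and then to check the listed properties one by one, the dimension bound being the only delicate point. Set $M_0:=j_{I!}(N)$; by Lemma~\ref{lem:j!semistable} this is a finite-dimensional $\theta_I$-semistable $A$-module with $\dim_i M_0=\dim_i N$ for all $i\in\{\infty\}\cup I$. Since $N$ is $\eta_I$-stable there is a surjection $A_I e_\infty\to N$ of $A_I$-modules, and applying the right-exact functor $j_{I!}$ (which sends the projective $A_I e_\infty$ to the projective $Ae_\infty$) shows that $M_0$ is generated at the vertex $\infty$. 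Let $M'\subseteq M_0$ be the sum of all submodules annihilated by $e_j$ for every $j\in\{\infty\}\cup I$; this class of submodules is closed under sums, so $M'$ is the unique largest such, and I set $M:=M_0/M'$. Exactness of $j_I^*$ together with $j_I^*(M_0)=N$ and $j_I^*(M')=0$ gives $j_I^*(M)\cong N$; the same support property gives $\dim_\infty M=1$, $\dim_i M=n_i$ for $i\in I$, and $\theta_I(M)=\theta_I(M_0)=0$ (using $v_i=n_i$ for $i\in I$ and that $\theta_I$ vanishes at the vertices of $K$); also $M$ is generated at $\infty$ because $M_0$ is, and by maximality of $M'$ the module $M$ has no nonzero submodule annihilated by all $e_j$, $j\in\{\infty\}\cup I$. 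I would then promote $\theta_I$-semistability to $\theta_I$-stability, which is needed below: as $M$ is generated at $\infty$ with $\dim_\infty M=1$, any proper submodule $L\subsetneq M$ has $\dim_\infty L=0$, hence $\theta_I(L)=\sum_{i\in I}\dim_i L\geq 0$, and if $\theta_I(L)=0$ then $L$ is annihilated by every $e_j$, $j\in\{\infty\}\cup I$, so $L=0$. Thus $M$ is $\theta_I$-stable, in particular $\theta_I$-semistable.

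It remains to prove $\dim_k M\leq\widetilde{v}_k$ for $k\in K$, which is vacuous if $K=\varnothing$, so assume $K\neq\varnothing$. Write $w_j:=\dim_j M$, so $w_\infty=1$, $w_i=n_i$ for $i\in I$, and $(1,w)$ is the dimension vector of $M$. Applying Lemma~\ref{lem:Vnj} to the $\theta_I$-stable module $M$ gives $2w_k\leq\sum_{\{e\in Q_1\mid\tail(e)=k\}}w_{\head(e)}$ for each $k\in K$, while $(1,v)\in\mathcal{V}(n_I)$ gives the opposite inequality with $w$ replaced by $v$. Subtracting, and using that $\phi:=w-v$ vanishes on $\{\infty\}\cup I$, we get $2\phi_k\leq\sum_{\{e\in Q_1\mid\tail(e)=k,\ \head(e)\in K\}}\phi_{\head(e)}$ for $k\in K$, that is $(2I-A_K)\phi_K\leq 0$, where $A_K$ is the adjacency matrix of the full subgraph on $K$ of the McKay graph and $\phi_K=(\phi_k)_{k\in K}$. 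Since $I\neq\varnothing$, deleting the vertices of $I$ from the affine ADE diagram leaves a graph each of whose connected components is a subgraph of a finite-type ADE diagram (exactly as in the proof of Proposition~\ref{prop:surjvar}), so the adjacency spectral radius of $A_K$ is strictly less than $2$; hence the Neumann series converges and $(2I-A_K)^{-1}=\frac12\sum_{n\geq 0}(A_K/2)^n$ has non-negative entries, so $\phi_K=(2I-A_K)^{-1}\big((2I-A_K)\phi_K\big)\leq 0$, i.e.\ $\dim_k M\leq v_k$ for all $k\in K$. Finally I would replace $v$ by $\widetilde{v}$: the module $P:=M\oplus\bigoplus_{k\in K}S_k^{\oplus(v_k-\dim_k M)}$ is $\theta_I$-polystable of dimension vector $(1,v)$ (each $S_k$, $k\in K$, being a $\theta_I$-stable simple), and its unique summand of dimension $1$ at $\infty$ is $M$, so Lemma~\ref{lem:vwtildev} yields $\dim_k M\leq\widetilde{v}_k$ for all $k\in K$, completing the proof.

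The step I expect to be the main obstacle is the passage from Lemma~\ref{lem:Vnj} and membership in $\mathcal{V}(n_I)$ to the pointwise bound $\dim_k M\leq v_k$: this is in effect a discrete maximum principle that relies essentially on the positive-definiteness of the finite-type ADE Cartan matrices obtained once the vertices of $I$ are deleted. The remaining ingredients — the bookkeeping with the recollement functors $j_{I!}$ and $j_I^*$, the short stability check, and the final appeal to Lemma~\ref{lem:vwtildev} — should be routine.
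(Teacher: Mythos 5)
Your proposal is correct, and its back end coincides with the paper's: both apply Lemma~\ref{lem:Vnj} to a $\theta_I$-stable module $M$ with $j_I^*M\cong N$, combine this with $(1,v)\in\mathcal V(n_I)$ to get a differenced inequality on $K$, invert the Cartan matrix of the finite-type subdiagram on $K$ to obtain $\dim_k M\le v_k$, then pad with vertex simples $S_k$ and invoke Lemma~\ref{lem:vwtildev} to descend to $\widetilde v$. The genuine difference is in how $M$ is produced. The paper takes the $\theta_I$-polystable representative of the $S$-equivalence class of $j_{I!}(N)$, extracts the unique summand $M_{\lambda_\infty}$ with $\dim_\infty=1$, and then needs a Jordan--H\"older argument (exactness of $j_I^*$ applied to the composition series) to conclude $j_I^*M_{\lambda_\infty}\cong N$. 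You instead set $M=j_{I!}(N)/M'$, where $M'$ is the maximal submodule supported on $K$; then $j_I^*M\cong N$ is immediate from exactness of $j_I^*$ and $j_I^*M'=0$, and you check directly that $M$ is $\theta_I$-stable (for which you first establish that $j_{I!}(N)$ is cyclic at $\infty$, via right-exactness of $j_{I!}$ and $j_{I!}(A_I e_\infty)\cong A e_\infty$). Your construction buys a shorter identification $j_I^*M\cong N$ and gives stability rather than mere semistability for free, at the modest cost of the extra cyclicity check; the two constructions in fact produce isomorphic modules, since both are the unique $\theta_I$-stable composition factor of $j_{I!}(N)$ with $\dim_\infty=1$. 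Replacing the paper's citation for nonnegativity of the inverse Cartan matrix by the Neumann-series estimate (spectral radius of $A_K$ below $2$) is also valid and self-contained. I found no gaps.
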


\begin{proof} Lemma~\ref{lem:j!semistable} shows that $j_{I!}(N)$ is $\theta_I$-semistable. If $\dim_i j_{I!}(N) \leq \widetilde{v}_i$ for $i\not\in \{\infty\}\cup I$, then
 $M\coloneqq j_{I!}(N)$ is the required $A$-module because $j_I^*j_{I!}$ is the identity. Otherwise, consider the $\theta_I$-polystable module $\bigoplus_{\lambda} M_\lambda$ that is $S$-equivalent to $j_{I!}(N)$. Let $M_{\lambda_\infty}$ denote the unique summand satisfying $\dim_\infty M_{\lambda_\infty} =1$. By Lemma~\ref{lem:j!semistable}, we have $\dim_i j_!(N) = n_i$, so $\dim_i M_{\lambda_\infty} \le n_i$.
 If $\dim_i M_{\lambda_\infty}<n_i$ for some $i\in I$, we would have $\theta_I(M_{\lambda_\infty})<0$, contradicting the
 $\theta_I$-stability of~$M_{\lambda_\infty}$. It follows that $\dim_i M_{\lambda_\infty} =n_i$ for all $i\in I$, and hence $\dim_i M_\lambda =0$ for all $\lambda\neq {\lambda_\infty}$ and all $i\in \{\infty\}\cup I$. For each index $\lambda$ and all $i\in \{\infty\}\cup I$, we have
\[
\dim_i j_I^*M_\lambda = \dim e_i \big(\overline{e}_I\PiQ\otimes_{\PiQ}(M_\lambda)\big) = \dim e_i \PiQ\otimes_{\PiQ} M_\lambda = \dim_i M_\lambda,
\]
 so $\dim_i j_I^*M_\lambda = 0$ for all
 $\lambda\neq {\lambda_\infty}$ and $i\in \{\infty\}\cup I$, and hence
 $j_I^*M_\lambda=0$ for $\lambda\neq {\lambda_\infty}$.

 We claim that $j_I^*M_{\lambda_\infty}$ is isomorphic to $N$. Indeed, the $\PiQ$-module $j_{I!}(N)$ is $\theta_I$-semistable by Lemma~\ref{lem:j!semistable}, and the $\theta_I$-stable $\PiQ$-modules $M_\lambda$ are
 the factors in the composition series of~$j_{I!}(N)$ in the category of $\theta_I$-semistable $\PiQ$-modules. It follows from exactness of $j_I^*$ that the $\PiQ_I$-modules~$j_I^*M_\lambda$ are the factors in the composition series of $j_I^*j_{I!}(N)\cong N$ in the category of $\eta_I$-semistable $\PiQ_I$-modules. But $j_I^*M_\lambda=0$ for $\lambda\neq {\lambda_\infty}$, so the only nonzero factor of the composition series is $j_I^*M_{\lambda_\infty}$. It follows that $j_I^*M_{\lambda_\infty}\cong N$, because the factor $j_I^*M_{\lambda_\infty}$ can only appear once in the composition series.

 We established above that
 $\dim_\infty M_{\lambda_\infty} =1$ and $\dim_i M_{\lambda_\infty} = n_i$ for all $i\in I$.
 Thus, to show that $M_{\lambda_\infty}$ is the required $A$-module, it remains to check that
 \begin{equation*}
 \dim_k M_{\lambda_\infty} \leq \widetilde{v}_k
 \end{equation*}
 for $k\in K=\{0,1,\dots,r\}\setminus I$.
 To simplify notation, write
 $(1,w)\coloneqq \dim M_{\lambda_\infty}$ for some $ w \in \SN^{r+1}$. We first establish the weaker inequality $ w \leq v$ (recall that $\widetilde{v} \leq v$ by Proposition~\ref{prop:surjvar}).
Combine the inequalities on the components of $v$ coming from the assumption $(1,v)\in \mathcal{V}(n_I)$, together with the inequalities on the components of $(1,w)$ obtained by applying Lemma~\ref{lem:Vnj} to the $\theta_I$-stable $\PiQ_I$-module $M_{\lambda_\infty}$, to obtain the inequality
 \begin{equation} \label{eqn:v-wk}
 2(v-w)_k \geq \sum_{\{e \in Q_1\mid \tail(e)=k\}} (v-w)_{\head(e)}
 \end{equation}
 for all $k \in K\coloneqq \{0,\dots,r\}\setminus I$.
 The vertex set $K$, together with all edges of the ADE Dynkin diagram joining two vertices from $K$, is
 a disjoint union of simply laced Dynkin diagrams. Let~$\Lambda$ be one of these subdiagrams, and let $C$ be its Cartan matrix. It follows from \eqref{eqn:v-wk} that $C(v-w)|_\Lambda\geq 0$. But the matrix $C^{-1}$ only has nonnegative entries (see, for example, \cite[equations~(1.157)--(1.158)]{Rosenfeld}), so $(v-w)|_\Lambda=C^{-1}C(v-w)|_\Lambda\ge 0$. Combining these inequalities for every such subdiagram $\Lambda$ with the equalities $ w_i=v_i$ for $i\in \{\infty\}\cup I$ gives $v\ge w$. This shows that $v_k-w_k\geq 0$ for all $k\in K$. Let $S_k\coloneqq\SC e_k$ denote the vertex simple $A$-module at vertex $k\in K$. The $\PiQ$-module
 \[
 \overline{M}\coloneqq M_{\lambda_\infty}\oplus \bigoplus_{k\in K}
 S_k^{\oplus (v_k-w_k)}
 \]
 is $\theta_I$-semistable of dimension vector $(1,v)$ by construction. As $M_{\lambda_\infty}$ is the unique summand of $\overline{M}$ with
dimension 1 at the vertex $\infty$, we have $w \leq \widetilde{v}$ by Lemma~\ref{lem:vwtildev}.
 \end{proof}

\begin{Theorem}\label{thm:mthetamorph1}
Let $v=(v_j)_{0 \leq j \leq r} \in \mathbb{N}^{r+1}$ be any vector satisfying $v_i=n_i$ for all $i \in I$. Suppose that $\mathfrak{M}_{\theta_I}(1,v)$ is non-empty. For the vector $(1,\vprime)\in \mathcal{V}(n_I)$ from Corollary~{\rm \ref{cor:non-empty}}, choose a vector~$\widetilde{\vprime}$ as in Proposition~{\rm \ref{prop:surjvar}}. Then
there is a commutative diagram
\begin{equation}\label{eqn:keyrevisited}
\begin{tikzcd}
 & \mathfrak{M}_{\widetilde{\theta}}\big(1, \widetilde{\vprime}\big) \arrow[dl, "\pi_I", swap ] \arrow[dr, "\tau_I" ]& \\
 \mathfrak{M}_{\theta_I}(1, \vprime) \arrow[rr, "\iota_I" ] & & \mathcal{M}_{\PiQ_I} (1,n_I)
\end{tikzcd}
\end{equation}
 of schemes over $\C$, where $\iota_I$ is an isomorphism of the underlying reduced schemes. Thus, $\mathcal{M}_{\PiQ_I}(1,n_I)$ is non-empty, irreducible, and its
underlying reduced scheme is normal and has symplectic singularities.
\end{Theorem}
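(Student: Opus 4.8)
The plan is to apply Proposition~\ref{prop:key} with the vector $\vprime$ in place of $v$, and then to promote the closed immersion $\iota_I$ occurring there to an isomorphism of reduced schemes by showing it is surjective. Concretely, since $\mathfrak{M}_{\theta_I}(1,v)$ is non-empty by hypothesis, Corollary~\ref{cor:non-empty} supplies a vector $(1,\vprime)\in\mathcal{V}(n_I)$ with $\vprime\ge v$ and $\mathfrak{M}_{\theta_I}(1,\vprime)$ non-empty; because $\vprime_i=n_i$ for $i\in I$, Proposition~\ref{prop:key} applied to $\vprime$ yields $\widetilde{\vprime}$ with $\widetilde{\vprime}_i=n_i$ for $i\in I$ together with the commutative triangle~\eqref{eqn:keyrevisited} in which $\pi_I$ is surjective, $\iota_I$ is a closed immersion, and $\mathcal{M}_{\PiQ_I}(1,n_I)$ is non-empty. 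Since $\mathfrak{M}_{\theta_I}(1,\vprime)$ carries the reduced scheme structure and the target is of finite type over $\C$, once $\iota_I$ is shown to be surjective on closed points it will identify $\mathfrak{M}_{\theta_I}(1,\vprime)$ with $\mathcal{M}_{\PiQ_I}(1,n_I)_{\mathrm{red}}$, and the asserted irreducibility, normality and symplectic singularities will then follow from Lemma~\ref{lem:quiver}(1).

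It therefore remains to prove surjectivity of $\iota_I$ on closed points. Commutativity of~\eqref{eqn:keyrevisited}, combined with the explicit description of $\pi_I$ from the proof of Proposition~\ref{prop:surjvar} (variation of GIT followed by adding copies of the vertex simples $S_k$ for $k\in K\coloneqq\{0,\dots,r\}\setminus I$) and of $\tau_I$ as the cornering morphism $V\mapsto[j_I^*V]$, shows that on closed points $\iota_I$ sends the $S$-equivalence class of a $\theta_I$-semistable $\PiQ$-module to the class of its cornering. So I would take an arbitrary $\eta_I$-stable $\PiQ_I$-module $N$ of dimension vector $(1,n_I)$ and apply Lemma~\ref{lem:existsemistable} with $(1,\vprime)$ and $\widetilde{\vprime}$ to obtain a $\theta_I$-semistable $\PiQ$-module $M$ with $j_I^*M\cong N$, $\dim_\infty M=1$, $\dim_iM=n_i$ for $i\in I$, and $\dim_kM\le\widetilde{\vprime}_k$ for $k\in K$. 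Padding with vertex simples, $M\oplus\bigoplus_{k\in K}S_k^{\oplus(\vprime_k-\dim_kM)}$ is $\theta_I$-semistable of dimension vector $(1,\vprime)$, hence defines a closed point of $\mathfrak{M}_{\theta_I}(1,\vprime)$; by surjectivity of $\pi_I$ this point equals $\pi_I([V])$ for some $\widetilde{\theta}$-stable $\PiQ$-module $V$ of dimension vector $(1,\widetilde{\vprime})$, so $\iota_I(\pi_I([V]))=\tau_I([V])=[j_I^*V]$. Finally, $j_I^*$ is exact and annihilates each $S_k$ with $k\in K$, so $j_I^*V$ and $j_I^*\bigl(M\oplus\bigoplus_{k\in K}S_k^{\oplus(\cdot)}\bigr)\cong N$ have the same $\eta_I$-stable composition factors; since $N$ and $j_I^*V$ are each $\eta_I$-stable of dimension vector $(1,n_I)$, this forces $j_I^*V\cong N$, whence $[N]\in\operatorname{im}(\iota_I)$, as required.

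I expect the main obstacle to be exactly this last lifting step: exhibiting, for a prescribed $\eta_I$-stable $N$, a $\widetilde{\theta}$-stable $\PiQ$-module of the correct dimension vector $(1,\widetilde{\vprime})$ whose cornering is $N$. The naive candidate $j_{I!}(N)$ need not have small enough components away from $\{\infty\}\cup I$, which is precisely why the argument must be routed through the specially chosen vector $\vprime\in\mathcal{V}(n_I)$ and through Lemma~\ref{lem:existsemistable} --- the latter itself resting on the combinatorial inequalities of Lemma~\ref{lem:Vnj}, the defining inequalities of $\mathcal{V}(n_I)$, and the nonnegativity of the inverse Cartan matrices of the finite-type subdiagrams supported on $K$. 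Granting that input, the remainder is bookkeeping with $S$-equivalence classes and composition series to pin down the isomorphism $j_I^*V\cong N$, after which the surjective closed immersion $\iota_I$ delivers the claimed identification of reduced schemes.
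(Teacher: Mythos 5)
Your proposal is correct and follows the paper's own proof essentially step for step: apply Proposition~\ref{prop:key} with the corrected vector $\vprime$ from Corollary~\ref{cor:non-empty}, reduce to surjectivity of $\iota_I$ (equivalently $\tau_I$) on closed points, invoke Lemma~\ref{lem:existsemistable} to produce a $\theta_I$-semistable $M$ with $j_I^*M\cong N$ and small enough dimensions away from $\{\infty\}\cup I$, pad by vertex simples $S_k$ to reach dimension vector $(1,\vprime)$, and lift through the surjective $\pi_I$ to a $\widetilde\theta$-stable module whose cornering is $N$. The only presentational difference is that you spell out the final identification $j_I^*V\cong N$ via $\eta_I$-stable composition factors under the exact, $S_k$-annihilating functor $j_I^*$, whereas the paper states the same conclusion more tersely; the substance is identical.
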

 \begin{proof}
Note first that $\mathfrak{M}_{\theta_I}(1,\vprime)$ is non-empty by Corollary~\ref{cor:non-empty}, so we may indeed apply Proposition~\ref{prop:surjvar} to obtain the vector $\widetilde{\vprime}$. The diagram
 from Proposition~\ref{prop:key} now takes the form as shown in \eqref{eqn:keyrevisited}. To prove that $\iota_I$ is an isomorphism of the underlying reduced schemes, it suffices to show that $\iota_I$ is surjective on closed points. The diagram commutes, $\pi_I$ is surjective and $\iota_I$ is a~closed immersion, so it suffices to show that $\tau_I$ is surjective on closed points. Consider a closed point $[N]\in \mathcal{M}_{\PiQ_I}(1,n_I)$, where $N$ is an $\eta_I$-stable $\PiQ_I$-module of dimension vector $(1,n_I)$. Let $M$ be the $\theta_I$-semistable $\PiQ$-module from Lemma~\ref{lem:existsemistable}. Since
 \[
 m_k\coloneqq {\vprime_k}-\dim_k M \geq {\widetilde{\vprime_k}} - \dim_k M \geq 0
 \]
 for all $k\in K=\{0,1,\dots,r\}\setminus I$ by Proposition~\ref{prop:surjvar} and Lemma~\ref{lem:existsemistable}, we may once again define
 \[
 \overline{M}\coloneqq M\oplus \bigoplus_{k\in K} S_k^{\oplus m_k},
 \]
 where
 $S_k$ is the vertex simple $A$-module at vertex $k\in K=\{0,\dots,r\}\cup I$. The $\PiQ$-module $ \overline{M}$ is $\theta_I$-semistable of dimension vector $(1,\vprime)$ by construction, and it satisfies $j_I^*(\overline{M}) = j_I^*(M) = N$. Write $[\overline{M}]\in \mathfrak{M}_{\theta_I}(1,\vprime)$ for the corresponding closed point, and let $\widetilde{M}$ be any $\widetilde{\theta}$-stable $A$-module of dimension vector $\big(1, \widetilde{\vprime}\big)$ such that the closed point $[\widetilde{M}]\in \mathfrak{M}_{\widetilde{\theta}}\big(1, \widetilde{\vprime}\big)$ satisfies $\pi_I([\widetilde{M}])=[\overline{M}]\in \mathfrak{M}_{\theta_I}(1,\vprime)$. Then $j_I^*(\widetilde{M})=j_I^*(\overline{M})=N$, hence $\tau_I([\widetilde{M}])= [N]$.
 The final statement of the proposition follows from Lemma~\ref{lem:quiver} and Proposition~\ref{prop:key}.
 \end{proof}

\begin{Corollary}
\label{cor:MfMnonempty}
Let $I\subseteq \{0,\ldots, r\}$ be a non-empty subset and let $n_I\in {\mathbb N}^I$. Then
\[
\mathcal{M}_{\PiQ_I} (1,n_I)\neq \varnothing \iff \mathfrak{M}_{\theta_I}(1,v)\neq \varnothing\text{ for some }v\in \mathbb{N}^{r+1}\text{ satisfying }v_i=n_i\text{ for }i\in I.
\]
\end{Corollary}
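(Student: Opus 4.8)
The plan is to observe that both implications are, essentially verbatim, consequences of results already established above, so the argument is purely one of assembly.

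For the direction ``$\Leftarrow$'', I would assume $\mathfrak{M}_{\theta_I}(1,v)\neq\varnothing$ for some $v\in\mathbb{N}^{r+1}$ with $v_i=n_i$ for all $i\in I$. Then the hypotheses of Theorem~\ref{thm:mthetamorph1} are met, so the commutative diagram~\eqref{eqn:keyrevisited} exists and, in particular, $\mathcal{M}_{\PiQ_I}(1,n_I)$ is non-empty. (Already Proposition~\ref{prop:key} concludes non-emptiness of $\mathcal{M}_{\PiQ_I}(1,n_I)$ under precisely this hypothesis, so either result may be cited.)

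For the direction ``$\Rightarrow$'', I would assume $\mathcal{M}_{\PiQ_I}(1,n_I)\neq\varnothing$ and choose an $\eta_I$-stable $\PiQ_I$-module $N$ of dimension vector $(1,n_I)$. By Lemma~\ref{lem:j!semistable}, the $\PiQ$-module $j_{I!}(N)$ is finite-dimensional, $\theta_I$-semistable, and satisfies $\dim_i j_{I!}(N)=\dim_i N=n_i$ for all $i\in\{\infty\}\cup I$; writing $(1,v)$ for its dimension vector, this gives $\mathfrak{M}_{\theta_I}(1,v)\neq\varnothing$ with $v_i=n_i$ for all $i\in I$. This is exactly the content of the Corollary stated just after Lemma~\ref{lem:j!semistable}, which I would simply cite.

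Consequently there is no genuine obstacle in the corollary itself; the substantive work lies entirely in its inputs. The recollement functors $j_I^*,j_{I!}$ together with Lemma~\ref{lem:j!semistable} supply ``$\Rightarrow$'', while ``$\Leftarrow$'' rests on the combinatorial construction behind $\mathcal{V}(n_I)$ (Lemma~\ref{lem:VnInon-empty} and Corollary~\ref{cor:non-empty}), the resolution $\pi_I$ of Proposition~\ref{prop:surjvar}, and the diagram of Theorem~\ref{thm:mthetamorph1}. The only point worth double-checking is that the two directions impose the same matching condition ``$v_i=n_i$ for $i\in I$'', which they do by construction in each case.
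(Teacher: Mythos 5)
Your proof is correct and takes essentially the same route as the paper: the direction ``$\Leftarrow$'' is exactly the non-emptiness conclusion of Proposition~\ref{prop:key}, and ``$\Rightarrow$'' is the unlabelled corollary following Lemma~\ref{lem:j!semistable}, which you correctly identify and invoke. If anything your citation for ``$\Rightarrow$'' is more precise than the paper's, which points to Corollary~\ref{cor:non-empty} (a statement that presupposes non-emptiness of a quiver variety) where it evidently means the corollary you cite.
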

\begin{proof}
If $\mathcal{M}_{\PiQ_I} (1,n_I)\neq \varnothing$, then Corollary~\ref{cor:non-empty} gives a vector $v\in \mathbb{N}^{r+1}$ satisfying $v_i=n_i$ for $i\in I$ such that $\mathfrak{M}_{\theta_I}(1,v)\neq \varnothing$. The converse is immediate from Proposition~\ref{prop:key}.
\end{proof}

Finally, we prove the results announced in Section~\ref{sec:intro}.
\begin{proof}[Proof of Theorem~\ref{thm:mainintro}]
The isomorphism in~(1), including the case when either space is empty, follows from Proposition~\ref{prop:omegaiso}. The isomorphism in~(2) is proved in Proposition~\ref{prop:Quotcommiso}. Statement~(3) follows by combining Corollary~\ref{cor:MfMnonempty} and Theorem~\ref{thm:mthetamorph1}. The final statement of Theorem~\ref{thm:mainintro} now
follows from \cite[Theorem~1.5]{bellamy2016symplectic}.
\end{proof}

\subsection*{Acknowledgements} The authors are grateful to Michel van den Bergh, Hiraku Nakajima, Yukinobu Toda, Michael Wemyss and the anonymous referees for questions, comments and suggestions. A.C.\ was supported by the Leverhulme Trust grant RPG-2021-149; S.G.~was supported by an Aker Scholarship; \'A.Gy.~and B.Sz.~were supported by
the EPSRC grant EP/R045038/1. \'A.Gy.~was also supported by the European Union's Horizon 2020
research and innovation programme under the Marie Sk{\l}odowska-Curie grant
agreement No.\ 891437.

\pdfbookmark[1]{References}{ref}
\LastPageEnding

\end{document}